\documentclass[11pt,a4paper]{article}

\usepackage[utf8]{inputenc}
\usepackage[T1]{fontenc}
\usepackage[english]{babel}
\usepackage{amsmath,amssymb,amsthm,amsfonts}
\usepackage{geometry}
\usepackage{mathrsfs}
\usepackage{bbm}
\usepackage{hyperref}
\usepackage{mathtools}
\usepackage{xcolor}
\newtheorem{lemma}{Lemma}[section]
\newtheorem{remark}{Remark}[section]

\newtheorem{proposition}{Proposition}[section]
\newtheorem{theorem}{Theorem}[section]
\theoremstyle{definition}

\newcommand{\E}{\mathbb{E}}
\newcommand{\Cbb}{\mathbb{C}}

\newcommand{\ii}{\mathrm{i}}

\newcommand{\R}{\mathbb{R}}

\newcommand{\Prob}{\mathbb{P}}
\newcommand{\Pbb}{\mathbb{P}}

\newcommand{\Cov}{\mathrm{Cov}}
\newcommand{\1}{\mathbbm{1}}
\newcommand{\tr}{{\ensuremath{\mathrm{tr}}}}
\newcommand{\Tr}{{\ensuremath{\mathrm{Tr}}}}

\newcommand\beq{\begin{equation}}
\newcommand\eeq{\end{equation}}
\newcommand{\Id}{I}
\newcommand{\C}{\mathbb{C}}

\title{Spectral Universality for Matrices with  Non-Linear Correlated Entries}
\makeatletter
\newcounter{author}
\renewcommand*\author[1]{%
  \stepcounter{author}%
  \ifnum\c@author=1
    \gdef\@author{#1}%
  \else
    \xdef\@author{\unexpanded\expandafter{\@author\and#1}}%
  \fi
  \csgdef{author@\the\c@author}{#1}}
\newcommand*\email[1]{%
  \csgdef{email@\the\c@author}{#1}}
\newcommand*\address[1]{%
  \csgdef{address@\the\c@author}{#1}}
\AtEndDocument{%
  \xdef\author@count{\the\c@author}%
  \c@author=1
  \print@authors}
\newcommand*\print@authors{%
  \ifnum\c@author>\author@count
  \else
    \print@author{\the\c@author}%
    \advance\c@author by 1
    \expandafter\print@authors
  \fi}
\newcommand*\print@author[1]{%
  \par\medskip
  \begin{tabular}{@{}l@{}}%
    \textsc{\csuse{author@#1}}\\
    \csuse{address@#1}\\
    \textit{E-mail address}:
    \href{mailto:\csuse{email@#1}}{\csuse{email@#1}}
  \end{tabular}}
\makeatother

\date{
    \today
}
\author{Marwa Banna}
\address{New York University Abu Dhabi, Division of Science, Mathematics, Abu Dhabi, UAE.}
\email{marwa.banna@nyu.edu}

\author{Issa-Mbenard Dabo}
\address{New York University Abu Dhabi, Division of Science, Mathematics, Abu Dhabi, UAE.}
\email{id2453@nyu.edu}

\author{Florence Merlev\`ede}
\address{LAMA, Univ Gustave Eiﬀel, Univ Paris Est Cr\'eteil, UMR 8050 CNRS, F-77454 Marne-La-Vall\'ee, France.}
\email{florence.merlevede@univ-eiffel.fr}

\usepackage{lipsum}
\providecommand{\keywords}[1]
{
  \small	
  \textit{Keywords:} #1
} %{} #1}
\providecommand{\subjclass}[1]
{
  \small	
  \textit{2000 Mathematics Subject Classification:} #1
}

\begin{document}

\maketitle

\begin{abstract}
We establish nonasymptotic spectral comparison results for matrices whose entries are non-linear functions of an iid random field. Under an exponential decay assumption on an $\mathbb L^\infty$-coupling coefficient, we derive high-probability bounds for the Hausdorff distance between their spectra and those of Gaussian matrices with matching covariance structures. We further derive a comparison with a covariance-matched free model and derive corresponding noncommutative Khintchine-type bounds. The results apply, in particular, to matrices whose entries are nonlinear transformations of causal linear processes, Volterra-type processes, and neural networks. The proof combines finite-memory approximation, block decomposition, Gaussian interpolation, and resolvent estimates. 
\end{abstract}
\keywords{matrices with correlated entries, spectral comparison, spectral universality, strong  convergence, causal processes, free probability, noncommutative Khintchine inequalities, neural networks, Volterra-type processes.}
\\ \subjclass{15A52, 60G10, 60G60, 46L54, 60E15, 47A10.}

\section{Introduction and main results}
 A central question in random matrix theory is to understand spectral convergence in a strong sense, namely to control the location of the spectrum and the operator norm of functions of random matrices. Recent developments have shown that strong spectral methods are useful in a much broader range of problems than was previously accessible. They have played a role in progress on questions related to random graphs, geometric models, operator algebras, and neural networks, etc. For random graphs, strong asymptotic freeness of random permutation matrices yields sharp control of the new eigenvalues of random lifts \cite{BordenaveCollins2019}; the polynomial method recently gave quantitative strong convergence bounds and a new proof of Friedman's theorem \cite{ChenGarzaVargasTroppVanHandel2026}. In geometry, related control of permutation representations underlies near-optimal spectral gaps for random covers of hyperbolic surfaces \cite{HideMagee2023,MageePuderVanHandel2025}. % and applications to random harmonic maps and minimal surfaces \cite{Song2025}. 
In operator algebras, the GUE strong-convergence theorem was introduced to prove that $\operatorname{Ext}(C^*_{\mathrm{red}}(\mathbb F_2))$ is not a group \cite{HaagerupThorbjornsen2005}; later strong random matrix approximation results entered work on the absence of projections and on the Peterson-Thom conjecture \cite{HaagerupSchultzThorbjornsen2006,Hayes2022,BelinschiCapitaine2024,BordenaveCollins2024Norm,HayesJekelKunnawalkamElayavalli2025}. Closely related free probabilistic methods have been used to to investigate the limiting singular-value distributions of neural-network Jacobians and to control their operator norms \cite{PenningtonSchoenholzGanguli2018,CollinsHayase2023,dadoun2025stability}. Across these examples, the decisive information concerns spectral support, extreme eigenvalues, or operator norms, which cannot be recovered from convergence of normalized traces alone.

On the methodological side, the resolvent and linearization approach of Haagerup and Thorbj\o rnsen for GUE matrices, and its extensions to the GOE/GSE and Wigner/Wishart ensembles, established the first general strong-convergence results \cite{HaagerupThorbjornsen2005,Schultz2005,CapitaineDonatiMartin2007,Anderson2013}. Male and Collins--Male incorporated deterministic families and Haar matrices \cite{Male2012,CollinsMale2014}, while interpolation and smooth-functional-calculus expansions developed by Collins--Guionnet--Parraud and Parraud allow growing matrix coefficients \cite{CollinsGuionnetParraud2022,Parraud2022Haar,Parraud2023Expansion}. Other recent approaches use nonbacktracking or representation-theoretic expansions, or polynomial inequalities applied to traces that depend rationally on the dimension \cite{BordenaveCollins2024Compact,MageeDeLaSalle2026,ChenGarzaVargasTroppVanHandel2026,ChenGarzaVargasVanHandel2026}. In the nonasymptotic comparison direction most directly relevant here, general Gaussian matrices are compared with operator-valued free models \cite{BandeiraBoedihardjoVanHandel,BandeiraCipolloniSchroederVanHandel2026}, and sums of independent matrices, or of matrices with Markovian dependence, are compared with Gaussian matrices having matching covariance \cite{brailovskayaVanHandel,VanWerdeSanders2026}. The present work extends this Gaussian-replacement line to non-linear causal functionals of an iid field through finite-memory truncation, conditional blocking, and interpolation, thereby upgrading  earlier blocking-and-Lindeberg comparisons at the level of empirical spectral distributions \cite{BannaMerlevede15, MerlevedePeligradBanna2014} to nonasymptotic Hausdorff control of the full spectrum.

Strong nonasymptotic spectral estimates also yield matrix concentration inequalities.  Classical matrix concentration results, such as bounds of Bernstein or Khintchine type, give nonasymptotic control on the operator norm of random matrices. More recent developments have refined this picture by showing that, in many noncommutative and nonhomogeneous situations, sharp spectral estimates are better captured through comparison with an associated Gaussian or free-probabilistic model. In particular, the work of Bandeira, Boedihardjo and van Handel \cite{BandeiraBoedihardjoVanHandel} develops matrix concentration inequalities for Gaussian random matrices using tools from free probability, leading to bounds that are sensitive to the noncommutative structure of the model. 

This Gaussian viewpoint is especially useful when combined with comparison principles.
Rather than estimating a general random matrix directly, one compares it to a Gaussian
matrix with matching first and second moments, for which sharper spectral estimates are
available. Such arguments are particularly well suited to nonhomogeneous models, and
they also provide a natural framework for treating dependence when the covariance
structure remains tractable. In this direction, Brailovskaya and van Handel \cite{brailovskayaVanHandel} prove a nonasymptotic spectral comparison for sums of independent random matrices, showing that their spectra can be controlled through the corresponding Gaussian model with the same mean and covariance. Extending such results to the dependent setting, including the $m$-dependent case, is not an easy task.

\paragraph{The model:} The purpose of this paper is to prove strong nonasymptotic spectral estimates for a class of Wigner-type  matrices whose upper-triangular entries are generated by a causal dependence mechanism. Let $(\varepsilon_{u,v})_{u,v\in\mathbb Z}$ be an independent and identically 
distributed (iid) real-valued random field defined on a probability space
$(\Omega,\mathcal F,\mathbb P)$. Given a measurable real-valued function $f$, we define the triangular array
$(X_{k,\ell})_{k,\ell\in\mathbb Z}$ by
\beq \label{eq:Xkl-def-2D}
X_{k,\ell}
=
f\big(\varepsilon_{k-i,\ell-j}: i,j\ge 0\big),
\qquad k,\ell\in\mathbb Z .
\eeq
Throughout the paper, we assume that the entries $X_{k,\ell}$ are centered and uniformly bounded,  namely
\[
\mathbb E[X_{k,\ell}]=0,
\qquad
\|f\|_\infty\le M<\infty,
\]
where $\|.\|_\infty$ is the essential supremum so that $|X_{k,\ell}|\le M$ almost surely. The associated Wigner-type matrix is then defined by 
\beq\label{eq:X-matrix-def}
\mathbb X_n
=
\frac{1}{\sqrt n}\sum_{k=1}^n\sum_{\ell=1}^k X_{k,\ell}\,E_{k,\ell},
\eeq 
where $E_{k,\ell}= e_{k,\ell}+e_{\ell,k}$ whenever $k\neq \ell$ and $E_{k,k}= e_{k,k}$ with $e_{k,\ell}$ being the $n \times n $ unit matrix. Thus, $(\mathbb X_n)_{k\ell}=n^{-1/2}X_{k,\ell}$ for $k\ge \ell$ and $(\mathbb X_n)_{\ell k}=(\mathbb X_n)_{k\ell}$. Note that for each fixed column index $\ell$, the sequence $(X_{k,\ell})_{k\in\mathbb Z}$ is generated by a causal function of the innovation sequence $(\varepsilon_{u,v})_{u,v\in\mathbb Z}$. In particular, dependence is allowed along rows and columns of the matrix.

As in \cite{brailovskayaVanHandel}, our aim is to compare the spectrum of $\mathbb X_n$ to that of  a Gaussian Wigner-type matrix having the same covariance structure. Namely,
let $(g_{k,\ell})_{k,\ell\in\mathbb Z}$ be a centered Gaussian field such that
\[
\Cov(g_{i,j},g_{k,\ell})=\Cov(X_{i,j},X_{k,\ell}),
\]
and define
\beq \label{defofGn}
\mathbb G_n=\frac1{\sqrt n}\sum_{k=1}^n\sum_{\ell=1}^k g_{k,\ell}E_{k,\ell}.
\eeq
More precisely, the aim is to show that the spectrum of $\mathbb X_n$ is close, with high probability, to the spectrum
of $\mathbb G_n$. 

Due to the high level of technical complexity of the proofs, we shall at a first step simplify the model and consider that the entries are bounded functions of an iid random field indexed in one direction. More specifically, we will consider the following simplified version of \eqref{eq:Xkl-def-2D}:  
\beq\label{eq:Xkl-def-1D}
X_{k,\ell}
=
f\big(\varepsilon_{k-i,\ell}: i\ge 0\big),
\qquad k,\ell\in\mathbb Z .
\eeq
 In this setting, dependence occurs along the first index, while different columns are driven by disjoint innovation sequences and are therefore independent.  Note that
$\mathbb X_n$ can be rewritten as $\sum_{\ell=1}^n \mathbb Z_{\ell,n}$ where the $n \times n$ random matrices $\mathbb Z_{\ell,n} := \frac{1}{\sqrt n} \sum_{k=\ell}^n X_{k,\ell}\,E_{k,\ell}$ are independent. However, a direct application of the results of \cite{brailovskayaVanHandel} does not yield a useful estimate in the present setting, even in the case of $1$-dependent random variables. Indeed, the quantity $  \big \Vert   \max_{1 \leq \ell \leq n} \Vert \mathbb Z_{\ell,n}  \Vert \big  \Vert_{\infty}$, which is one of the main parameters governing their spectral comparison bounds, is growing with $n$. 

To quantify the dependence cross the rows, 
we shall introduce the $\mathbb L^\infty$-coupling coefficient of the model \eqref{eq:Xkl-def-1D}, which is defined as follows: let $(\varepsilon'_{u,v})_{u,v\in\mathbb Z}$ be an independent copy of $(\varepsilon_{u,v})_{u,v\in\mathbb Z}$.
For $n\in\mathbb Z$ and $j\in\mathbb Z$, define the coupled variable
\beq\label{eq:X-star-def}
X^*_{n,j}
:=
f\big(\ldots,\varepsilon'_{-2,j},\varepsilon'_{-1,j},\varepsilon'_{0,j},\varepsilon_{1,j},\ldots,\varepsilon_{n,j}\big).
\eeq
 By stationarity of the random field, the $L^{\infty}$-coupling  coefficient is then defined by
\beq\label{eq:delta-def}
\delta(n)
:=
\|X_{n,0}-X^*_{n,0}\|_\infty,
\qquad n\ge 0.
\eeq
We assume an exponential decay of dependence: there exist constants $K,c>0$ such that, for all integers $k\ge 0$,
\beq \label{conddelta}
\delta (k) \leq K e^{-c k } \, .
\eeq

One of our main results is a universality result for the spectrum ${\rm sp} ( \mathbb X_n)$ of $\mathbb X_n$ in terms of Hausdorff distance, which is defined for two subsets $A,B \subset \R$ by 
\[
d_H (A,B) = \inf \{ \varepsilon >0 : A \subseteq B + [-\varepsilon, \varepsilon ] \text{ and } B \subseteq A + [-\varepsilon, \varepsilon ] \} .
\] 

\begin{theorem}\label{thm:Hausdorff distance}
Let $(X_{k, \ell})_{k, \ell \in \mathbb Z}$ be defined by \eqref{eq:Xkl-def-1D}, $\mathbb X_n$ by \eqref{eq:X-matrix-def} and $\mathbb G_n$ by \eqref{defofGn}. Assume condition \eqref{conddelta}.  There exists a positive constant $C$ such that for any $t \geq 0$ and $\kappa >0$,
\[
\Pbb \big ( d_H ( {\rm sp} ( \mathbb X_n), {\rm sp} ( \mathbb G_n)  ) > C \varepsilon(t) \big ) \leq n {\rm e}^{-t} \, , 
\]
where 
\beq \label{defepsilont}
\varepsilon(t) =   \frac{\sigma_*}{  \sqrt n} \sqrt{t }  +  n^{-\kappa/2}  \sqrt{t }  
 +    t^{2/3} \frac{(\log n)^{1/2}}{n^{1/6}}  + t   \Big (  \frac{ \log n }{n}  \Big )^{1/6}  +   t^{5/3}   \Big ( \frac{\log n }{n}  \Big )^{1/2} , 
 \eeq
with
\[
\sigma_*^2=  \sum_{\ell  \geq 0} \big|  \Cov (  X_{0,0} ,    X_{\ell,0}  )\big|.
\]
\end{theorem}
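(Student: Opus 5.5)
We follow the route announced in the abstract: finite-memory approximation, block decomposition, Gaussian interpolation, and resolvent estimates. The goal is a bound of Brailovskaya--van Handel type for a model that is a sum of independent pieces with small norm.

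\textbf{Step 1 (finite-memory approximation).} Fix a memory length $m = \lceil (\kappa/c)\log n\rceil$ and set $X^{(m)}_{k,\ell} = \E[X_{k,\ell}\mid \varepsilon_{k-i,\ell},\, 0\le i\le m]$, with $\mathbb X_n^{(m)}$ the corresponding matrix. Condition \eqref{conddelta} and a telescoping/coupling argument give $\|X_{k,\ell}-X^{(m)}_{k,\ell}\|_\infty \le \sum_{j> m}\delta(j)\lesssim e^{-cm}\le n^{-\kappa}$.

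The error matrix $\mathbb X_n-\mathbb X_n^{(m)}$ is a sum over columns $\ell$ of independent symmetric matrices, each with entries $O(n^{-\kappa-1/2})$. Its operator norm is therefore $O(n^{-\kappa/2}\sqrt t)$ with probability $1-e^{-t}$. This follows from matrix Bernstein applied to the column pieces, whose norm is at most $n^{-\kappa}$. This term produces the summand $n^{-\kappa/2}\sqrt t$ in $\varepsilon(t)$.

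On the Gaussian side, let $\mathbb G_n^{(m)}$ match the covariance of $\mathbb X_n^{(m)}$. We compare $\mathbb G_n$ with $\mathbb G_n^{(m)}$ by a Gaussian norm bound, using that their covariances differ by $O(n^{-\kappa})$. By Weyl's inequality, Hausdorff distances move by at most the operator norm of the perturbation, so it suffices to treat the $m$-dependent model.

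\textbf{Step 2 (blocking).} Within each column, cut the row index into alternating big blocks of length $p$ and small blocks of length $m$. Conditionally on the innovations in the small blocks, the big-block pieces are independent, because each $X^{(m)}_{k,\ell}$ depends only on a window of length $m+1$.

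This writes $\mathbb X_n^{(m)}$ as a conditionally independent sum $\sum_{\ell}\sum_{b} \mathbb Z_{b,\ell}$. Each $\mathbb Z_{b,\ell}$ is supported on a $p\times 1$ strip plus its transpose, so $\|\mathbb Z_{b,\ell}\|\lesssim M\sqrt{p/n}$. This is the key gain over the naive column decomposition, whose pieces have norm growing with $n$.

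The conditioning introduces a nonzero conditional mean. We handle it by centering each piece and controlling the conditional-mean matrix separately. That matrix is again block-structured and small after dividing by $\sqrt n$ over blocks.

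\textbf{Step 3 (comparison for the conditional model).} Apply the Brailovskaya--van Handel universality theorem conditionally. Its parameters are:
\begin{itemize}
\item $\sigma_*$, the weak variance;
\item $v$, the square root of the norm of the covariance, of order $\sqrt{p\log n/n}$;
\item $R=\max\|\mathbb Z_{b,\ell}\|\lesssim\sqrt{p/n}$.
\end{itemize}
The theorem yields a Hausdorff bound of the form
\[
\sigma_*\sqrt{t}/\sqrt n + R^{1/3}\sigma^{2/3}t^{2/3} + R t,
\]
which compares $\mathbb X_n^{(m)}$ with a conditional Gaussian model. The conditional Gaussian model has covariance given by conditional covariances. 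A second Gaussian interpolation is then needed to pass from this conditional covariance to the true covariance of $\mathbb G_n^{(m)}$. For this we use a resolvent estimate on $\E\,\tr\, h(\cdot)$ with smooth $h$, controlling the covariance discrepancy, which concentrates by independence across columns. The extra summand $t^{5/3}(\log n/n)^{1/2}$ comes from this concentration step.

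\textbf{Step 4 (choice of parameters).} Choose $p\asymp \log n$, taken larger than $m$ so that the small blocks are negligible. Substituting into the bound of Step 3 produces the powers $n^{-1/6}$ in $\varepsilon(t)$. A union bound over the $O(n)$ conditioning events and spectral points then gives the probability $ne^{-t}$.

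\textbf{Main obstacle.} The hardest part is Step 3: obtaining a bound uniform over the conditioning and showing that the covariance of the conditional Gaussian model is close in the required noncommutative sense, through $\sigma$ and $v$ and not merely entrywise, to that of $\mathbb G_n$. I would first try to control the difference of resolvents $\E[(z-\mathbb G^{\rm cond})^{-1}]-\E[(z-\mathbb G_n)^{-1}]$ by interpolation, with imaginary part $\mathrm{Im}\, z\sim\varepsilon(t)$, and then convert this to a spectral inclusion via standard resolvent-to-support arguments.
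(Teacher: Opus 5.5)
Your finite-memory reduction and the conditional use of the Brailovskaya--van Handel comparison are in the spirit of the paper (its term $Q_1$). There is, however, a genuine gap in how you dispose of what the conditioning leaves behind.

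\emph{The conditional-mean matrix is not small.} Take any row $k$ whose window $(\varepsilon_{k-m,\ell},\dots,\varepsilon_{k,\ell})$ meets a small block; in particular this includes every row lying inside a small block, which you must attach to some piece. For such $k$, $\E[X^{(m)}_{k,\ell}\mid\text{small blocks}]$ is a nondegenerate random variable of order $M$. For instance, if $X_{k,\ell}=\varepsilon_{k,\ell}\varepsilon_{k-1,\ell}$ and both innovations lie in a small block, it is $X_{k,\ell}$ itself. These entries are independent across the $n$ columns and carry the factor $n^{-1/2}$, which is exactly Wigner scaling. So for such a row index $k\asymp n$, the $k$-th row of this matrix already has Euclidean norm of order one, however small the proportion of such rows is. The matrix therefore cannot be removed by Weyl's inequality; it must stay in the model as the deterministic part $A_0(\mathbf a)$ of the conditional comparison.

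\emph{The conditional Gaussian model is not a small covariance perturbation.} For the same reason, $A_0(\vec\eta)+G(\vec\eta)$ is not close to $\mathbb G_n^{(m)}$. By the law of total covariance, $\Cov(X^{(m)})=\E[\Cov(X^{(m)}\mid\vec\eta)]+\Cov(\E[X^{(m)}\mid\vec\eta])$, and the last term is of the same order as the first, so there is no discrepancy that ``concentrates''. You need a second genuine universality step in which the frozen innovations are themselves the randomness. With your big/small decomposition this step does not close. The conditional mean and conditional covariance of the piece attached to big block $b$ depend jointly on both neighbouring small blocks $(s_{b-1},s_b)$. Unconditionally, $A_0(\vec\eta)+G(\vec\eta)$ is then again a $1$-dependent sum of blocks, and re-blocking only reproduces the problem.

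\emph{The missing idea} is the paper's construction that makes everything additive in the frozen blocks. Working with triple-blocks and freezing every third block, it splits the middle block as
$X^{(m)}_{i,j}=\E(X^{(m)}_{i,j}\mid\overrightarrow{\eta_{3k-2}}(j))+\{X^{(m)}_{i,j}-\E(X^{(m)}_{i,j}\mid\overrightarrow{\eta_{3k-2}}(j))\}$.
Each triple-block then contributes $B^{(1)}_k(\mathbf a_{3k-3})+B^{(2)}_k(\mathbf a_{3k})$: two uncorrelated parts, each depending on a single frozen block, with the mean corrections split the same way.

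Replacing the two parts by independent Gaussians with matching conditional covariances gives $\mathbb Z_n^{(m)}$. This matrix is compared with $\mathbb X_n^{(m)}$ conditionally. Unconditionally, it regroups into independent pieces $\overrightarrow{Z_k}(\overrightarrow{\eta_{3k}}(j))$ indexed by the frozen blocks, with exactly the covariance of $X^{(m)}$ (identity \eqref{identitycov}). A second interpolation then compares $\mathbb Z_n^{(m)}$ with $\mathbb G_n^{(m)}$. It is adapted to summands that are Gaussian plus a bounded shift, hence unbounded, and handled through the moment quantities $R_q$. This step is where the terms $t(\log n/n)^{1/6}$ and $t^{5/3}(\log n/n)^{1/2}$ come from.

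The tool you propose for this step, $\E\,\tr\,h$ or the expected resolvent, also cannot locate the spectrum: a single outlier changes a normalized trace by only $O(1/n)$. One needs $(\E\,\tr|zI_n-\cdot|^{-2p})^{1/(2p)}$ with $p\asymp\log n$, together with high-probability resolvent bounds. For the inclusion $\operatorname{sp}(\mathbb G_n)\subseteq\operatorname{sp}(\mathbb X_n)+\dots$ one also needs a concentration inequality for $\|(zI_n-\mathbb X_n)^{-1}\|$ under dependence, which the paper obtains by alternating conditioning.

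A smaller point in Step 1: with only the entrywise bound, the Bernstein variance is of order $n^{1-2\kappa}$. You either need the covariance decay of $X-X^{(m)}$, or you can take $m=c^{-1}(2+\kappa)\log n$ as the paper does, so that the deterministic bound $\|\mathbb X_n-\mathbb X_n^{(m)}\|\le 2\sqrt n\,\delta(m)$ suffices.
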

Since $\big|  \Cov (  X_{0,0} ,    X_{\ell,0}  )\big| \leq \Vert X_{0,0} \Vert_1 \delta(\ell)$, under condition \eqref{conddelta}, $\sigma_*^2$ is finite.

\begin{remark}\label{rem:Extension-Rectangle}
   Our main results were stated under the assumption that $\mathbb X_{n,d}$ is a square self-adjoint matrix. We now explain how they extend to the more general setting in which $\mathbb X_{n,d}$ is a rectangular $n\times d$ matrix. Since such a matrix does not necessarily have a real spectrum, the natural objects to consider are its singular values. We define its Hermitian dilation by
$$
    \mathcal D(\mathbb X_{n,d})
    :=
    \begin{pmatrix}
        0 & \mathbb X_{n,d}\\
        \mathbb X_{n,d}^T & 0
    \end{pmatrix}.
$$
The eigenvalues of $\mathcal D(\mathbb X_{n,d})$ are precisely the singular values of $\mathbb X_{n,d}$ and their negatives, together with additional zero eigenvalues when $n\neq p$. More precisely, if $\operatorname{sv}(\mathbb X_{n,d})$ denotes the set of singular values of $\mathbb X_{n,d}$, then the spectrum of $\mathcal D(\mathbb X_{n,d})$ satisfies
$$
    \operatorname{sp}(\mathcal D(\mathbb X_{n,d})) \cup\{0\} =  (-  \operatorname{sv}(\mathbb X_{n,d}))
    \cup\{0\} \cup \operatorname{sv}(\mathbb X_{n,d}).
$$
Thus, the rectangular setting can be reduced to the self-adjoint one by applying our results to $\mathcal D(\mathbb X_{n,d})$. The proofs require only minor adjustments and otherwise follow the same strategy.  The arguments developed in this paper can  be adapted to the rectangular case, by replacing the representation \eqref{eq:X-matrix-def}  by
$$
    \mathcal D(\mathbb X_{n,d})
    =
    \frac{1}{\sqrt n}
    \sum_{i=1}^{n}\sum_{j=1}^{d}
    X_{i,j}\,\widehat E_{i,j}^{(n,d)},
    \qquad
    \widehat E_{i,j}^{(n,d)}
    =
    \begin{pmatrix}
        0_n & E_{i,j}^{(n,d)} \\
        \bigl(E_{i,j}^{(n,d)}\bigr)^{*} & 0_d
    \end{pmatrix}
    \in M_{n+d}(\mathbb C)_{\mathrm{sa}},
$$
where $E_{i,j}^{(n,d)}$ denotes the $n\times d$ standard matrix unit. Thus, the lower-triangular restriction $j\leq i$ disappears, and all pairs $(i,j)\in\{1,\ldots,n\}\times\{1,\ldots,p\}$ are now present. Assume that there exists positive constants $\gamma_-,\gamma_+ >0$ such that $\gamma_-\leq\frac nd\leq\gamma_+$, then $n+d,$ $n$ and $d$ are of same order. This yields rectangular analogues of Theorems~\ref{thm:Hausdorff distance} and~\ref{thm:dependent-free-comparison}, showing that the singular-value set of $\mathbb X_{n,d}$, augmented by zero when necessary, concentrates around the corresponding singular-value sets of $\mathbb G_n$ and $\mathbb X_{\mathrm{free}}$, with the same bounds as in the self-adjoint setting.

This formulation also yields spectral comparison results for the associated Gram matrices. Indeed, the nonzero eigenvalues of $MM^T$ and $M^TM$ coincide and are given by the squared singular values of $M$:
\[
\operatorname{sp}(\mathbb X_{n,d} \mathbb X_{n,d}^T)\setminus\{0\}
=
\operatorname{sp}(\mathbb X_{n,d}^T \mathbb X_{n,d})\setminus\{0\}
=
\big\{s_j(\mathbb X_{n,d})^2:1\le j\le r,\ s_j(\mathbb X_{n,d})>0\big\}.
\]
Consequently, any Hausdorff-distance estimate for the singular values transfers directly to the spectra of the corresponding Gram matrices.
\end{remark}

\noindent\textbf{Example 1.}
Let  $(\varepsilon_{u,\ell})_{u,\ell\in\mathbb Z}$ be an iid field of bounded real-valued random variables. For $0<\rho<1$ and $\kappa>0$, let $|a_i|\le \kappa\rho^i$ and define
\[
Y_{k,\ell}=\sum_{i\ge0}a_i\varepsilon_{k-i,\ell},
\qquad
X_{k,\ell}=h(Y_{k,\ell})-\mathbb E[h(Y_{k,\ell})],
\]
where $h:\mathbb R\to\mathbb R$ is Lipschitz. Then $(X_{k,\ell})$ is centered and uniformly bounded, and its coupling coefficients satisfy
\[
\delta(k)\le
4\|\varepsilon_{0,0}\|_\infty\sum_{i\ge k}|a_i|
\le
\frac{4\kappa\|\varepsilon_{0,0}\|_\infty}{1-\rho}\rho^k.
\]
Hence the assumptions of Theorem~1.1 hold.

\vspace{0.3cm} \noindent\textbf{Example 2.} \emph{(Neural Networks)} 
 Our results can also be applied to a class of convolutional neural networks considered in several studies, including \cite{BorovykhBohteOosterlee2017,han2023deep,wang2026optimal}. Fix $L\geq2$ the number of convolutional layers and $k\geq1$ the filter length. Let $N_0>L(k-1)$ be the input size and for $1\leq \ell \leq L$ denote by $ N_\ell=N_0-\ell(k-1)$ the output size of the $\ell$-th layer. Assume that the random field $(\varepsilon_{i,j})_{i,j\in\mathbb Z}$ of iid random variables is uniformly bounded; that is, there exists a constant $M<\infty$ such that $\sup_{i,j\in\mathbb Z}\|\varepsilon_{i,j}\|_\infty\leq M$. For $1\leq i \leq N_0$ and $1\leq j \leq n$ denote by $\mathbb X_{i,j}^{(0)}=\varepsilon_{i-L(k-1),j}$ the input layer; its columns indexed by $j$ are regarded as different observations in a batch of size $n$. In what follows, we assume that $k$ and $L$ are independent of $n$. For $1\leq \ell \leq L$, we define recursively the output of the $\ell$-th layer, denoted by $\mathbb X^{(\ell)}=(\mathbb X_{i,j}^{(\ell)})_{\substack{1 \leq i \leq N_\ell \\ 1 \leq j \leq n}}$, as follows 
\begin{equation}
\mathbb X_{i,j}^{(\ell)}
=
\sigma_\ell\left(
b_\ell+
\sum_{s=1}^{k}
a_s^{(\ell)}
\mathbb X_{i+k-s,j}^{(\ell-1)}
\right)-
\mathbb E\left[
\sigma_\ell\left(
b_\ell+
\sum_{s=1}^{k}
a_s^{(\ell)}
\mathbb X_{i+k-s,j}^{(\ell-1)}
\right)
\right],
\label{eq:cnn-entry-recursion}
\end{equation}
where $\bigl(a_s^{(\ell)}\bigr)_{1\leq s\leq k}$ is a size $k$ filter, $b_\ell\in\mathbb R$, and
$\sigma_\ell:\mathbb R\to\mathbb R$ is a $L_\ell$-Lipschitz activation function. Equivalently, the recursion \eqref{eq:cnn-entry-recursion} can be
written in matrix form as
$$
\mathbb X^{(\ell)}
=
\sigma_\ell\left(
A_\ell\mathbb X^{(\ell-1)}+B_\ell
\right)
-
\mathbb E\left[
\sigma_\ell\left(
A_\ell\mathbb X^{(\ell-1)}+B_\ell
\right)
\right]\in
 {\mathbb R^{N_\ell\times n}}
,
\quad 
A_\ell
=
\left(
a_{k+i-j}^{(\ell)}
\mathbf 1_{\{1\leq k+i-j\leq k\}}
\right)_
{\substack{
1\leq i\leq N_\ell\\
1\leq j\leq N_{\ell-1}
}},
$$
for $1\leq \ell \leq L$, where $B_\ell \in \mathbb R^{N_\ell\times n} $ is the matrix whose entries are all equal to $b_\ell$ and $\sigma_\ell$ is applied entrywise.

An induction over the layers shows that, for every $1\leq\ell\leq L$, there exists a measurable function $f_\ell$, independent of $n$, $i$, and $j$, such that
\begin{equation}\label{eq:Stationarity-Neurons}
\mathbb X_{i,j}^{(\ell)}
=
f_\ell\left(
\varepsilon_{i-u,j}:
(L-\ell)(k-1)\leq u\leq L(k-1)
\right).
\end{equation}
This representation follows because each weight matrix $A_\ell$ is a banded upper-triangular Toeplitz matrix. The triangular together with the initial shift $L(k-1)$ in $\mathbb X^{(0)}$ definition
 enforces $u\geq0$ in \eqref{eq:Stationarity-Neurons}. Moreover, the Toeplitz structure together with identical processing of each column, makes $f_\ell$ independent of the row and column indices. 
 
Set $X_{i,j} = \mathbb X_{i,j}^{(L)}$, this random variable is centered and uniformly bounded, and the corresponding coupling coefficients satisfy
$$
\delta (r) \leq 2M\prod_{\ell=1}^{L}\left(
L_\ell\sum_{s=1}^{k}|a_s^{(\ell)}| \right) \mathbf 1_{\{0\leq r\leq L(k-1)\}}.
$$
Hence the exponential-decay assumption \eqref{conddelta} is satisfied. Finally, according to Remark \ref{rem:Extension-Rectangle} we can extend 
Theorem \ref{thm:Hausdorff distance} to the rectangular matrix $\mathbb X_n = \frac{1}{\sqrt n} \mathbb X^{(L)}$ whenever $N_L$ and $n$ are of the same order.

\paragraph{Notation:} We use the following notation throughout the paper. We denote by $M_n (\mathbb C)_{\mathrm{sa}}$ the set of $n\times n$ self-adjoint matrices on $\mathbb C$. For any $n\times n$ matrix $M=(M_{ij})_{1\leq i,j\leq n}$, we denote its operator norm by $\|M\|$, its spectrum by $\operatorname{sp}(M)$ and its modulus by $|M| = (M^T M)^{\frac12}$. Its trace and normalized trace are denoted, respectively, by $\operatorname{Tr} M:=\sum_{i=1}^n M_{ii}$ and $\operatorname{tr} M:=\frac{1}{n}\operatorname{Tr} M$. For any scalar random variable $x$ and any $p>0$, we write $\|x\|_p:=\bigl(\mathbb E|x|^p\bigr)^{1/p}$ and denote the essential supremum of $|x|$ by $\|x\|_\infty$. Finally, we use the following convention $\mathbb E[M]^\alpha = (\mathbb E M)^\alpha$ and $\tr[M]^\alpha = (\tr M)^\alpha$. For two integers $u$ and $v$ with $u\le v$, we denote by $[\![u,v]\!]$ the integer interval notation $
[\![u,v]\!] := \{u,u+1,\dots,v\}$ 
and we adopt the convention that intervals are empty whenever $u>v$.

\section{Some intermediate results and proof strategy} \label{Sectionintermediate}

In this section, $(X_{k, \ell})_{k, \ell \in \mathbb Z}$ is defined by \eqref{eq:Xkl-def-1D}, $\mathbb X_n$ by \eqref{eq:X-matrix-def} and $\mathbb G_n$ by \eqref{defofGn}. To prove Theorem \ref{thm:Hausdorff distance},  we start by proving the following high-probability spectral inclusion.

\begin{theorem} \label{spectrumonedirection}
Assume condition \eqref{conddelta}. 
Let $\kappa >0$. There exists a positive constant $C$, depending on $K$, $c$, $\kappa$ and $M$, such that, for any $t \geq 0$,
\[
\Prob \Big (  {\rm sp } (   {\mathbb X}_n ) \subseteq     {\rm sp } (   {\mathbb G}_n )  + C  \varepsilon (t)   [-1,1]  \Big )   \geq  1-  ne^{-t} \, ,
\]
where $\varepsilon(t)$ is defined by \eqref{defepsilont}. 
\end{theorem}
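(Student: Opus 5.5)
The plan is to follow the Brailovskaya--van Handel route and add a dependence-breaking layer in front of it. It has four steps: truncate to finite memory, block and condition so that the blocks become independent, run Gaussian interpolation on resolvent moments, and finish with an $\varepsilon$-net argument over spectral parameters.

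\emph{Step 1: finite memory.} Fix $m \asymp \kappa c^{-1}\log n$ and replace $X_{k,\ell}$ by $X^{(m)}_{k,\ell}=\E[X_{k,\ell}\mid \varepsilon_{k-i,\ell},\,0\le i\le m]$. By \eqref{conddelta}, $\|X_{k,\ell}-X^{(m)}_{k,\ell}\|_\infty\le \delta(m)\le K e^{-cm}\le n^{-\kappa-1}$. Consider the matrix of differences, built from the independent columns $\ell$. Its operator norm is controlled with high probability by a matrix Bernstein/Hoeffding bound applied to the independent column sums $\mathbb Z_{\ell,n}$, or simply by the crude bound $n\cdot n^{-1/2}\,n^{-\kappa-1}$. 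The Gaussian side is handled the same way: the covariances of $X$ and $X^{(m)}$ differ by $O(\delta(m))$, and a Gaussian norm bound gives a contribution that fits into the $n^{-\kappa/2}\sqrt t$ term. By Weyl's inequality both spectra move by at most this amount.

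\emph{Step 2: blocking.} The truncated entries are $m$-dependent along each column. Cut $[\![1,n]\!]$ into alternating big blocks of length $p$ and small blocks of length $m$. Conditioned on the innovations feeding the small blocks (or after deleting them), the big-block pieces $\mathbb Z_{\ell,I}=n^{-1/2}\sum_{k\in I}X^{(m)}_{k,\ell}E_{k,\ell}$ are independent across all pairs $(\ell,I)$. Each satisfies $\|\mathbb Z_{\ell,I}\|\le M\sqrt{p/n}$, since it is a rank-two, row-plus-column-type matrix with $p$ entries of size $M/\sqrt n$. The small-block part has variance parameter of order $m/p$. Its norm is bounded by a Gaussian comparison/Bernstein argument, giving an error of order $\sqrt{(m/p)\,t}$ times the norm scale, and it is removed again by Weyl.

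\emph{Step 3: independent-sum comparison.} On the big-block sum I would then apply the nonasymptotic spectral inclusion for sums of independent self-adjoint matrices of \cite{brailovskayaVanHandel}. The comparison object is the Gaussian matrix with the same covariance, and I would prove this part directly via interpolation of $\E\,\tr|z-\mathbb X|^{-2q}$ with $q\asymp t$ plus a concentration step. Their parameters are:
\begin{itemize}
\item $\sigma_*$, controlled through $\sigma_*^2=\sum_\ell|\Cov(X_{0,0},X_{\ell,0})|$, which bounds the weak variance $\sup_{\|v\|=\|w\|=1}\E\langle v,\mathbb X w\rangle^2\lesssim\sigma_*^2/n$;
\item $R=\max\|\mathbb Z_{\ell,I}\|\lesssim\sqrt{p/n}$;
\item $\sigma(\cdot)$ of order one.
\end{itemize}
Their bound $\sigma_*\sqrt t+R^{1/3}\sigma^{2/3}t^{2/3}+Rt$ then produces, with $p\asymp (n\log n)^{1/3}$-type choices balanced against the small-block error $\sqrt{m/p}$, terms of the shape $t^{2/3}(\log n)^{1/2}n^{-1/6}$, $t(\log n/n)^{1/6}$ and $t^{5/3}(\log n/n)^{1/2}$ in \eqref{defepsilont}. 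After this the Gaussian matrix built from big blocks only must be compared with the full $\mathbb G_n$. Its covariance differs only by the cross-block and small-block terms, which are Gaussian with small variance, so Gaussian concentration plus Weyl handles it. The union over $n$ spectral locations, or over an $n$-point net, gives the factor $n e^{-t}$.

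\emph{Main obstacle.} I expect the hardest step to be the conditioning in Step 2. After conditioning on the small-block innovations, the big-block entries are no longer centered, and their conditional covariance is random. So the Gaussian model matched in Step 3 is a conditional one, and it must be shown to be spectrally close to $\mathbb G_n$. I would handle this by centering conditionally, noting that the conditional means depend only on boundary entries within distance $m$ of the small blocks, which therefore cost $O(m/p)$ in variance. The conditional covariance would then be compared with the true one through a Hoeffding-type concentration over the independent columns, propagating everything through the resolvent bounds rather than through norms.
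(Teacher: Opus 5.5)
Your proposal has a genuine gap in Step 2 and in the last part of Step 3. There you treat the small-block (or boundary) part as a perturbation of size $\sqrt{(m/p)t}$ and remove it with Weyl's inequality, but that part is not small in operator norm.

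\textbf{Why the small-block part is not small.} A row index $k$ in a small block carries the $k$ entries $n^{-1/2}X^{(m)}_{k,\ell}$, $\ell\le k$. For $k$ of order $n$ this row has Euclidean norm of order $\|X_{0,0}\|_2$, and correspondingly $\|\E A^2\|\ge(\E A^2)_{kk}\gtrsim \E(X^{(m)}_{0,0})^2$. The variance parameter that governs norms is a maximum over rows; only the normalized trace $\tr\,\E A^2$ is of order $m/p$. So Weyl's inequality gives only an $O(1)$ error.

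\textbf{The same objection in two other places.} It applies to discarding the conditional means after conditioning on the small-block innovations. On boundary rows these means are of order $M$, random through the frozen innovations, and again form a matrix of norm of order one. It also applies to passing from the big-block Gaussian matrix to $\mathbb G_n$ by Gaussian concentration plus Weyl.

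\textbf{Concentrating the conditional covariance does not rescue this.} By the law of total covariance, its average equals $\Cov(X^{(m)})$ minus the covariance of the conditional means. So once the means are dropped, the Gaussian model has an $O(1)$ variance deficit on every boundary row; for entries whose window is almost entirely frozen, the conditional variance is nearly zero. Your remark about propagating through resolvent bounds points the right way, but that resolvent-level mechanism is exactly what is missing.

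\textbf{What the paper does instead: keep everything.} It uses blocks of length $m$ only, grouped in triples with every third block frozen. It splits the middle block as $\E\big(X^{(m)}_{i,j}\mid \overrightarrow{{\bf \eta}_{3k-2}}(j)\big)$ plus a centered remainder. Then, given the frozen blocks, each triple-block vector is a sum of two \emph{uncorrelated} pieces, each depending on a single frozen boundary.

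\textbf{First interpolation.} These two pieces are replaced by independent Gaussians with the conditional covariances, while the conditional means are kept. This comparison uses $R\lesssim M\sqrt{m/n}$.

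\textbf{Regrouping and second interpolation.} The resulting $\mathbb Z^{(m)}$ is regrouped by frozen block into units $\overrightarrow{Z_k}\big(\overrightarrow{{\bf \eta}_{3k}}(j)\big)$. These are independent \emph{unconditionally}, and their covariances add up exactly to that of $X^{(m)}$ (identity \eqref{identitycov}; this is where the uncorrelated split is needed). A second interpolation then reaches $\mathbb G^{(m)}$. It is adapted to summands that are Gaussian plus a bounded random shift, using moments $R_q$ instead of an $L^\infty$ bound. Because every independent unit has length $O(m)$, there is no big-block/small-block tradeoff.

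\textbf{Your rate would also fall short.} Even granting Step 2, your tradeoff balances $(p/n)^{1/6}$ (from $R^{1/3}\sigma^{2/3}$ with $R\asymp\sqrt{p/n}$, $\sigma\asymp 1$) against $(m/p)^{1/2}$. That gives at best order $(m/n)^{1/8}$, so it could not produce the $n^{-1/6}$ terms of $\varepsilon(t)$.
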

The first step is to introduce a finite-memory approximation of the model. Fix an integer
$m\in\{1,\dots,n-1\}$. For $k,\ell\in\mathbb Z$, define
\beq\label{eq:Xm-def}
X^{(m)}_{k,\ell}
:=
\E\big(X_{k,\ell}\,\big|\,\sigma\big(\varepsilon_{k-i,\ell}:0\le i\le m\big)\big).
\eeq
The random variable $X^{(m)}_{k,\ell}$ is measurable with respect to the finite window $
(\varepsilon_{k,\ell},\varepsilon_{k-1,\ell},\ldots,\varepsilon_{k-m,\ell})$. Consequently, for each fixed column index $\ell$, the process
$(X^{(m)}_{k,\ell})_{k\in\mathbb Z}$ is $m$-dependent. Moreover,
\[
\E X^{(m)}_{k,\ell}=0,
\qquad
|X^{(m)}_{k,\ell}|\le M
\quad\text{almost surely}.
\]
We define the associated finite-memory Wigner-type matrix by
\beq\label{eq:Xm-matrix-def}
\mathbb X_n^{(m)}
=
\frac{1}{\sqrt n}
\sum_{k=1}^n\sum_{\ell=1}^k X^{(m)}_{k,\ell}E_{k,\ell}.
\eeq
The matrix $\mathbb X_n^{(m)}$ is the first approximation to $\mathbb X_n$ in the proof. Its approximation error is controlled by the coupling coefficient introduced above. In particular, one can easily show that
\[
\|X_{0,0}-X^{(m)}_{0,0}\|_\infty\le \delta(m),
\]
and therefore, under \eqref{conddelta}, this error decays exponentially fast in $m$. The proof of Theorem~\ref{spectrumonedirection} proceeds through the following comparison scheme:
\[
\mathbb X_n
\longrightarrow
\mathbb X_n^{(m)}
\longrightarrow
\mathbb Z^{(m)}_n
\longrightarrow
\mathbb G^{(m)}_n
\longrightarrow
\mathbb G_n .
\]
The first arrow is the finite-memory reduction described above; its spectral cost is controlled in Lemma~\ref{ineconcentration1}. The second arrow compares $\mathbb X_n^{(m)}$ with a block-Gaussian proxy $\mathbb Z^{(m)}_n$. The construction of this proxy is one of the main ingredients of the proof and is carried out in Section~\ref{Section:main proof}. We decompose the innovation sequence into blocks of length $m$, group these blocks into triple-blocks, and freeze suitable boundary blocks. Conditionally on the frozen boundary blocks, the remaining contributions from different triple-blocks become independent. Inside each triple-block, the middle part is further decomposed into a conditional expectation and a centered remainder; this decomposition produces uncorrelated components. We then replace these components by Gaussian vectors with matching covariances. Since uncorrelated Gaussian vectors are independent, the Gaussian replacement turns the frozen block decomposition into an independent Gaussian block model while preserving the relevant covariance structure. This construction allows us to adapt the interpolation argument of Brailovskaya and van Handel \cite{brailovskayaVanHandel} to the present setting. 

The block-Gaussian proxy $\mathbb Z_n^{(m)}$ is constructed in Section~\ref{Subsection:Step1}, while the resolvent comparison between $\mathbb X_n^{(m)}$ and $\mathbb Z_n^{(m)}$ is established in Section~\ref{Subsection:Step2}. Although $\mathbb Z_n^{(m)}$ is Gaussian conditionally on the frozen boundary blocks, its unconditional law retains a nontrivial random block structure through its dependence on these boundary variables. The next step is therefore to reorganize its contributions according to the frozen boundary blocks and to replace the resulting block variables by Gaussian vectors with matching covariances. This yields a Gaussian Wigner-type matrix $\mathbb G_n^{(m)}$ whose covariance structure coincides with that of the finite-memory process $(X_{i,j}^{(m)})_{i,j\in\mathbb Z}$. The comparison between $\mathbb Z_n^{(m)}$ and $\mathbb G_n^{(m)}$ is carried out in Section~\ref{Subsection:Step3} and requires a separate argument. Although $\mathbb Z^{(m)}_n$ is built from Gaussian blocks, its entries are not uniformly bounded:  conditionally on the frozen boundary blocks, they are Gaussian variables shifted by bounded terms.  Thus, the bounded-summand form of the comparison theorem of Brailovskaya and van Handel \cite{brailovskayaVanHandel} cannot be applied as a black box. Instead, we follow, in Section \ref{Subsection:Step3} their interpolation strategy and adapt it to the present setting. The final arrow compares $\mathbb G^{(m)}_n$ with the Gaussian comparison matrix $\mathbb G_n$, which has the same covariance structure as the original matrix $\mathbb X_n$. This accomplished in Section \ref{Section:Step4}.

\begin{remark}
The same approach applies to the more general two-directional causal model
\[
X_{k,\ell}
=
f\big(\varepsilon_{k-i,\ell-j}: i,j\ge 0\big),
\qquad k,\ell\in\mathbb Z,
\]
introduced in \eqref{eq:Xkl-def-2D}, where correlations may occur both across rows and across columns of $\mathbb{X}_n$. The proof in this more general setting requires a two-dimensional version of the blocking and boundary-freezing construction. Since these modifications are notationally heavier and would obscure the main ideas, we give the full proof in the one-directional case and explain the necessary non-trivial changes in Appendix~\ref{appendix:twodimensional-extension} concerning the extension to functions of iid random fields in two directions as given in Section \ref{Sectionextension2D}.
\end{remark}

The main technical estimate is a comparison of resolvent moments, obtained through the procedure described above and stated in the following proposition.

\begin{proposition}\label{propcomparaisonmomentsp}
Let $p\in\mathbb N^*$ and let $z\in\mathbb C$ with $0<\Im m (z)\le 1$. Then there exists a positive constant $C$,
depending on $\sum_{k\ge0}\delta(k)$, such that
\[
\left|
\left(\E\big[\tr\,|z\Id_n-\mathbb X_n^{(m)}|^{-2p}\big]\right)^{\frac1{2p}}
-
\left(\E\big[\tr\,|z\Id_n-\mathbb G_n|^{-2p}\big]\right)^{\frac1{2p}}
\right| \leq  U_{n,m}(p,z).
\]
 where
\beq \label{defUmn}
U_{n,m}(p,z)
=
C\left[
\frac{pn^2\delta(m)}{\Im m(z)^3}
+
\frac{M^3p^2}{\Im m(z)^4}
\left(
m\sqrt{\frac{m}{n}}+p\frac{m}{n}
\right)
+
\frac{(1+M)^{6/5}p^3}{\Im m(z)^4}\sqrt{\frac{m}{n}}
+
\frac{(1+M)^4p^5}{\Im m(z)^4}\left(\frac{m}{n}\right)^{3/2}
\left(\frac{n}{m}\right)^{1/(2p-1)}
\right].
\eeq
\end{proposition}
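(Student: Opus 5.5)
We propose to prove Proposition~\ref{propcomparaisonmomentsp} by following the chain $\mathbb X_n^{(m)}\to\mathbb Z_n^{(m)}\to\mathbb G_n^{(m)}\to\mathbb G_n$ and applying the triangle inequality to the quantities $\Phi(\mathbb Y):=(\E[\tr|z\Id_n-\mathbb Y|^{-2p}])^{1/(2p)}$. Since $\mathbb Y\mapsto \tr|z-\mathbb Y|^{-2p}$ is a Schatten-type norm of the resolvent, $\Phi$ is a genuine $L^{2p}$-type norm. For the deterministic-perturbation steps we will use the resolvent identity $R(\mathbb A)-R(\mathbb B)=R(\mathbb A)(\mathbb A-\mathbb B)R(\mathbb B)$ together with $\|R\|\le \Im m(z)^{-1}$.

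\textbf{Step 4: comparing $\mathbb G_n^{(m)}$ with $\mathbb G_n$.} This step should produce the term $pn^2\delta(m)/\Im m(z)^3$. Both matrices are Gaussian, so we will couple them by Gaussian interpolation $\mathbb G(s)=\sqrt s\,\mathbb G_n+\sqrt{1-s}\,\mathbb G_n^{(m)}$ with independent copies. The derivative is then given by Gaussian integration by parts as a sum over index pairs of $(\Cov(X_{ij},X_{kl})-\Cov(X^{(m)}_{ij},X^{(m)}_{kl}))$ times second derivatives of the trace functional. Each covariance difference is bounded by $2M\delta(m)$. There are at most $n^3$ nonzero pairs, since the column index is shared and we use the $n^{-1}$ normalization. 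Each second derivative contributes $p\,\Im m(z)^{-3}$ times $\E\tr|R|^{-2p}$-type factors. Dividing by $2p\,\Phi^{2p-1}$ yields the crude $n^2\delta(m)$ bound. This is harmless because $\delta(m)$ decays exponentially.

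\textbf{Step 1: constructing the block proxy.} We split $\{1,\dots,n\}$ into blocks of length $m$ and group them into triple-blocks. In each triple-block we freeze the innovations of one boundary block. Conditionally on the frozen blocks, the $m$-dependence makes distinct triple-blocks independent. Inside each triple-block we write the middle contribution as the conditional expectation given the frozen boundaries plus a centered remainder, and we replace the remainder by a Gaussian vector with the same conditional covariance. This defines $\mathbb Z_n^{(m)}$.

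\textbf{Step 2: comparing $\mathbb X_n^{(m)}$ with $\mathbb Z_n^{(m)}$.} Conditionally on the frozen variables, $\mathbb X_n^{(m)}-\E[\cdot\mid\text{frozen}]$ is a sum of independent matrices $\mathbb Z_j$, one per column and triple-block, each of norm at most $CM\, m/\sqrt n$. To compare them with their Gaussian analogues we adapt the Brailovskaya--van Handel cumulant/interpolation argument. We Taylor-expand $\tr|z-\mathbb Y(s)|^{-2p}$ along the interpolation to third order. The first and second order terms cancel by the matching of conditional means and covariances. The third-order terms involve $\E\|\mathbb Z_j\|^3\,p^2\,\Im m(z)^{-4}$, and summing over the roughly $n^2/m$ blocks gives the term $M^3p^2(m\sqrt{m/n}+pm/n)\,\Im m(z)^{-4}$.

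\textbf{Step 3: comparing $\mathbb Z_n^{(m)}$ with $\mathbb G_n^{(m)}$.} We unfreeze the boundaries, i.e. we replace the bounded conditional-mean terms, which depend on the frozen blocks, by Gaussians with matching covariance. Here the summands are unbounded, being Gaussian plus bounded. We therefore truncate at level $\sqrt{\log}$ and use Gaussian moment bounds $\E\|\cdot\|^q$. We control the $p$-th moment remainder via Hölder with exponent $2p-1$; this is the source of the factor $(n/m)^{1/(2p-1)}$ and of the powers $(1+M)^{6/5}$ and $(1+M)^4$, coming from interpolating between $L^2$ and sup bounds. This yields the last two terms of \eqref{defUmn}.

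\textbf{Main obstacle.} We expect Step 3 to be hardest. The random block structure means the Brailovskaya--van Handel bound cannot be used as a black box: the matching of second moments holds only unconditionally, while the summands are neither bounded nor independent. We would first try to condition on the frozen blocks and treat the frozen variables through an additional interpolation layer. If this fails, we would use a leave-one-block-out decoupling combined with the resolvent bound $\|R\|\le\Im m(z)^{-1}$.
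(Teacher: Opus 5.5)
Your overall chain $\mathbb X_n^{(m)}\to\mathbb Z_n^{(m)}\to\mathbb G_n^{(m)}\to\mathbb G_n$ matches the paper's. So do your Step~2 (conditional Brailovskaya--van Handel interpolation with matching conditional means and covariances) and your Step~4 (Gaussian covariance interpolation over $n^3$ terms of size $O(M\delta(m))$). The gap is in how you build $\mathbb Z_n^{(m)}$, and it is what leaves your Step~3 without an argument.

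You split the middle sub-block into its conditional expectation given the frozen boundaries plus a remainder. But the middle entries $X^{(m)}_{i,j}$, $(3k-2)m<i\le(3k-1)m$, are functions of the interior blocks $\overrightarrow{{\bf \eta}_{3k-2}}(j),\overrightarrow{{\bf \eta}_{3k-1}}(j)$ only. So this conditional expectation is identically $0$ and the split does nothing. The Gaussian you substitute for triple-block $k$ then has conditional covariance $\Gamma_k({\bf a}_{3k-3}(j),{\bf a}_{3k}(j))$, which depends jointly on both adjacent boundary blocks. After unfreezing, consecutive triple-blocks therefore stay coupled through $\overrightarrow{{\bf \eta}_{3k}}(j)$. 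This is why you end up with summands that are ``neither bounded nor independent''.

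Your fallbacks do not close this. Conditioning on the frozen blocks gives nothing small: conditionally, $\mathbb Z_n^{(m)}$ has nonzero means $M_{i,j}$ and the conditional covariance, and neither matches $\mathbb G_n^{(m)}$. The ``additional interpolation layer'' and the leave-one-block-out decoupling are left unspecified. Replacing only the mean terms by Gaussians is also insufficient, because the Gaussian part has a random covariance and is unconditionally a Gaussian mixture.

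The missing idea is to condition the middle sub-block on the \emph{interior} block $\overrightarrow{{\bf \eta}_{3k-2}}(j)$. Set $U^{(k)}(j)=\bigl(\E(X^{(m)}_{i,j}\mid\overrightarrow{{\bf \eta}_{3k-2}}(j))\bigr)_i$ and let $V^{(k)}(j)$ be the remainder. Then $\overrightarrow{B}^{(1)}_k=(\tilde Y^{(3k-2)}({\bf a}_{3k-3}(j)),U^{(k)}(j))$ and $\overrightarrow{B}^{(2)}_k=(V^{(k)}(j),\tilde Y^{(3k)}({\bf a}_{3k}(j)))$ are uncorrelated. Their covariances depend only on ${\bf a}_{3k-3}(j)$, resp.\ ${\bf a}_{3k}(j)$, and each is replaced by its own independent Gaussian vector.

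This is what allows $\mathbb Z_n^{(m)}$ to be regrouped by boundary blocks. Let $\overrightarrow{Z_k}(\overrightarrow{{\bf \eta}_{3k}}(j))$ collect the right part of triple-block $k$ and the left part of triple-block $k+1$, each Gaussian plus its mean correction. These vectors are centered and independent across $(k,j)$ without any conditioning. The covariance identity \eqref{identitycov} shows that their covariances sum to that of the column $(X^{(m)}_{i,j})_{i}$. Step~3 then becomes a comparison of a sum of independent, unbounded matrices with its Gaussian model.

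The paper handles unboundedness not by truncation but by running the Brailovskaya--van Handel argument with the moment parameter $R_q$. This parameter is bounded through Gaussian moments by $CMq^{1/2}(m/n)^{(q-2)/(2q)}$, with $q=12p^2$ and $\sigma^2(\mathbb Z^{(m)})\le 2M\sum_i\delta(i)$. That is where $(n/m)^{1/(2p-1)}$ and the powers of $1+M$ come from.

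A smaller point in your Step~2: each column/triple-block summand is a rank-two matrix whose norm is controlled by the $\ell^2$ norm of its at most $3m$ entries. The correct parameter is therefore $R\le CM\sqrt{m/n}$, not $CMm/\sqrt n$. The term $M^3m\sqrt{m/n}$ arises as $R\,\sigma^2$ with $\sigma^2\le 12mM^2$. It does not come from summing $\E\|\mathbb Z_j\|^3$ over $\sim n^2/m$ blocks; that sum diverges.
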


\medskip

The next lemma gives a preliminary localization of the spectrum of $\mathbb X_n$.
It combines the finite-memory approximation error with a rough high-probability
bound for the $m$-dependent model. Its role is not to provide the sharp spectral
scale, but rather to ensure that the spectrum of $\mathbb X_n$ lies in a
controlled deterministic interval before the resolvent comparison is applied.

\begin{lemma}\label{ineconcentration1}
There exists a universal positive constant $C$ such that, for all $t>0$ and all positive integer $m$,
\[
\Prob \left(
{\rm sp}(\mathbb X_n)
\subseteq
\left\{
2\sqrt n\,\delta(m)
+
CM\sqrt{\frac mn}\sqrt{n+t}
\right\}[-1,1]
\right)
\ge 1-e^{-t}.
\]
\end{lemma}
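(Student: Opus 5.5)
We bound $\|\mathbb X_n\|$, since ${\rm sp}(\mathbb X_n)\subseteq \|\mathbb X_n\|[-1,1]$, by splitting
\[
\|\mathbb X_n\|\le \|\mathbb X_n-\mathbb X_n^{(m)}\|+\|\mathbb X_n^{(m)}\|
\]
and treating the two terms separately.

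\emph{First term.} It is deterministic up to a null set. Each entry of $\mathbb X_n-\mathbb X_n^{(m)}$ is bounded by $n^{-1/2}\|X_{0,0}-X^{(m)}_{0,0}\|_\infty\le n^{-1/2}\delta(m)$, using stationarity and the bound $\|X_{0,0}-X^{(m)}_{0,0}\|_\infty\le\delta(m)$ recalled above. The operator norm is at most the Frobenius norm, which is at most $\big(n^2\cdot n^{-1}\delta(m)^2\big)^{1/2}=\sqrt n\,\delta(m)$. The factor $2$ in the statement leaves room for the symmetrization: we may also write $\mathbb X_n-\mathbb X_n^{(m)}=A+A^T$ with $A$ lower-triangular and use $\|A\|\le\|A\|_F$.

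\emph{Second term.} This is where the $m$-dependence is used. We partition the row indices $[\![1,n]\!]$ into residue classes modulo $m+1$. Concretely, for $r\in[\![0,m]\!]$ let $\mathbb X^{(m)}_{n,r}$ collect the terms $X^{(m)}_{k,\ell}E_{k,\ell}$ with $k\equiv r \pmod{m+1}$, so that $\mathbb X_n^{(m)}=\sum_{r=0}^m \mathbb X^{(m)}_{n,r}$. Within a fixed class $r$:
\begin{itemize}
\item the variables $X^{(m)}_{k,\ell}$ with distinct $k$ in that class depend on disjoint windows $\{\varepsilon_{k-i,\ell}:0\le i\le m\}$;
\item distinct columns are driven by independent innovations.
\end{itemize}
Hence all entries appearing in $\mathbb X^{(m)}_{n,r}$ are independent, centered, and bounded by $M$. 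The matrix $\mathbb X^{(m)}_{n,r}$ is therefore a symmetric matrix with independent bounded entries in its lower triangle, the remaining entries being zero. It has at most $n$ nonzero rows in the ``$k$'' index set, of cardinality about $n/(m+1)$, and the symmetric part fills the columns.

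For such a matrix we apply a standard bound: an $\varepsilon$-net on the sphere combined with Hoeffding's inequality, or equivalently Talagrand's concentration for convex Lipschitz functions of bounded independent variables together with $\E\|\cdot\|\lesssim M\sqrt n$. Either route gives
\[
\Prob\big(\|\sqrt n\,\mathbb X^{(m)}_{n,r}\|> CM\sqrt{n+s}\big)\le e^{-s}.
\]
We take $s=t+\log(m+1)$ and use a union bound over the $m+1$ classes. The term $\log(m+1)\le n$ is absorbed into the constant via $\sqrt{n+t+\log(m+1)}\le\sqrt2\sqrt{n+t}$. Summing over $r$ then gives $\|\mathbb X_n^{(m)}\|\le (m+1)CM\sqrt{(n+t)/n}$ with probability at least $1-e^{-t}$.

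\emph{Main obstacle.} Summing over $r$ as above produces a factor $m$, not the claimed $\sqrt m$. To recover $\sqrt m$, I would instead split the \emph{column-row pattern} in a way that keeps each piece sparse. Each piece should then have norm of order $M\sqrt{(n+t)/(m n)}\cdot\sqrt{\,}$ (row count), and their sum should be handled jointly rather than by the triangle inequality.

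The approach I would try first is the net argument applied directly to $\mathbb X^{(m)}_n$. For fixed unit vectors $u,v$, the quadratic form $\langle u,\mathbb X^{(m)}_n v\rangle$ is a function of the independent innovations $(\varepsilon_{j,\ell})$. Changing a single $\varepsilon_{j,\ell}$ affects only the entries $X^{(m)}_{k,\ell}$ with $k\in[\![j,j+m]\!]$, that is, at most $m+1$ entries of column $\ell$ and the corresponding symmetric entries. The bounded-differences (McDiarmid) constant for that coordinate is
\[
c_{j,\ell}\lesssim n^{-1/2}M\sum_{k=j}^{j+m}\big(|u_kv_\ell|+|u_\ell v_k|\big).
\]
By Cauchy–Schwarz, $\sum_{j,\ell}c_{j,\ell}^2\lesssim M^2 (m+1)^2 n^{-1}$, which gives a subgaussian tail of scale $M m/\sqrt n$. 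Taking a $1/4$-net of cardinality $9^{2n}$ and a union bound would then yield a deviation of order $M\,m\sqrt{(n+t)/n}$, again with $m$ in place of $\sqrt m$.

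This is the step where I expect difficulty. The sharper factor $\sqrt{m/n}\sqrt{n+t}$ should come from the refined observation that $\sum_{k=j}^{j+m}|u_k|\le\sqrt{m+1}\,\big(\sum_{k=j}^{j+m}u_k^2\big)^{1/2}$. Summing over $j$ then counts each $u_k^2$ exactly $m+1$ times, giving
\[
\sum_{j,\ell}c_{j,\ell}^2\lesssim M^2 (m+1)^2 n^{-1}\|u\|^2\|v\|^2 ,
\]
which is still order $m^2$. So the bounded-differences route may only give $m$. I would therefore write the final bound with whatever power of $m$ the argument delivers. I expect that the authors' $\sqrt m$ arises from the variance (Bernstein/Talagrand) version rather than the worst-case bounded differences: the typical influence of a single coordinate is controlled through the covariance, and the exponential decay of $\delta$ keeps $\sigma_*$ bounded. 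Using a Talagrand-type inequality with that variance proxy is the step I would attempt.
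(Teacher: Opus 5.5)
Your bound on $\|\mathbb X_n-\mathbb X_n^{(m)}\|$ is fine; the Frobenius argument even gives $\sqrt n\,\delta(m)$. There is a genuine gap in the second term: neither route you carry out for $\mathbb X_n^{(m)}$ reaches the claimed bound. Both give $\|\mathbb X_n^{(m)}\|\lesssim Mm\sqrt{(n+t)/n}$, and you leave the missing $\sqrt m$ to an unspecified variance-sensitive inequality.

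That guess points the wrong way. The lemma needs no Bernstein/Talagrand argument and no use of $\sigma_*$ or of the decay of $\delta$. The loss of $\sqrt m$ comes from where you apply the triangle inequality, and from how McDiarmid charges the coefficients.
\begin{itemize}
\item In the residue-class route you add operator norms. Each $\|\mathbb X^{(m)}_{n,r}\|$ is of order $M$ no matter how little of $u,v$ it sees.
\item In the McDiarmid route, each coefficient $a_{k,\ell}$ enters the bounded-difference constants of $m+1$ different innovations. Each of those constants already carries a Cauchy--Schwarz factor $m+1$, which produces $(m+1)^2$ in $\sum c_{j,\ell}^2$.
\end{itemize}

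The missing idea, which is the paper's argument, is to fix unit vectors $u,v$ and block the scalar form $\langle u,\mathbb X_n^{(m)}v\rangle=n^{-1/2}\sum_{j\le i}X^{(m)}_{i,j}a_{i,j}$ itself.
\begin{itemize}
\item Cut each column into consecutive row-blocks of length $m$, and separate even-indexed from odd-indexed blocks.
\item By $m$-dependence and column independence, the block sums $B_{\ell,j}=\sum_{i\in\text{block }\ell}X^{(m)}_{i,j}a_{i,j}$ with $\ell$ of fixed parity are independent and centered.
\item By Cauchy--Schwarz, $|B_{\ell,j}|\le M\sqrt m\,\bigl(\sum_{i\in\text{block }\ell}a_{i,j}^2\bigr)^{1/2}$.
\item Each $a_{i,j}$ lies in exactly one block, so the sum of squared ranges is $\lesssim M^2m\sum_{i,j}a_{i,j}^2\lesssim M^2m$.
\item Hoeffding then gives $\Prob\bigl(|\langle u,\mathbb X_n^{(m)}v\rangle|\ge y\bigr)\le 4\exp\bigl(-cy^2n/(M^2m)\bigr)$.
\item A $1/4$-net of size $C^n$ and a union bound yield $\|\mathbb X_n^{(m)}\|\le CM\sqrt{m/n}\sqrt{n+t}$ with probability at least $1-e^{-t}$.
\end{itemize}

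Your residue-class decomposition also works if you use it at this scalar level rather than on operator norms. Set $s_r^2=\sum_{k\equiv r}\sum_{\ell}a_{k,\ell}^2$. Then $\langle u,\mathbb X^{(m)}_{n,r}v\rangle$ has subgaussian norm $\lesssim Mn^{-1/2}s_r$. The triangle inequality for $\psi_2$-norms together with $\sum_r s_r\le\sqrt{m+1}\,\bigl(\sum_r s_r^2\bigr)^{1/2}$ gives the same $\sqrt m$ scale before the net argument.
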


The final intermediate result to prove Theorem \ref{spectrumonedirection} is a resolvent estimate. It is the analogue, in the present dependent setting, of Lemma~7.3 in \cite{brailovskayaVanHandel}. It combines the resolvent moment comparison of Proposition~\ref{propcomparaisonmomentsp} with the preliminary localization estimate above, and yields a high-probability control of the resolvent of $\mathbb X_n$ in terms of the resolvent of the Gaussian comparison matrix $\mathbb G_n$.

\begin{lemma}\label{lma73} Let $z \in {\mathbb C}$ with $\Im m (z) \in ]0,1]$. Then, there exist $n_0 \in {\mathbb N}$ and a universal positive constant $C$ such that for all $n \geq n_0$, $x \geq \log n$, and every positive integer $m$, 
\begin{multline*}
\Prob  \Bigl (   \Vert  ( z I_n - {\mathbb X}_n )^{-1 } \Vert  \geq  2   \frac{\sqrt{n} \delta(m) }{ \Im m (z)^2}  +   C  \Big   \{  \Vert  ( z I_n - {\mathbb G}_n )^{-1 } \Vert   + 
  \frac{\sigma_*}{  \Im m (z)^{2} \sqrt n} \sqrt{x }  \\+  \frac{ x n^2 }{\Im m (z)^{3}} \delta(m)   
 +   \frac{ M^3 }{\Im m (z)^{4}}  x^2  \Big ( m \sqrt{\frac{m}{n}} + x  \frac{m}{n}  \Big )   +   \frac{1 }{\Im m (z)^{4}}  (1+M)^{4}  x^5  \Big ( \frac{m}{n}  \Big )^{3/2} \Big \}  \Bigr )  \leq  e^{-x} \, , 
\end{multline*}
where  $ \sigma_*^2=  \sum_{\ell  \geq 0} |  \Cov (  X_{0,0} ,    X_{\ell,0}  )|$.
\end{lemma}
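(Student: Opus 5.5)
We follow the scheme of Lemma~7.3 in \cite{brailovskayaVanHandel}, adapted to the dependent setting. We split the resolvent of $\mathbb X_n$ into that of $\mathbb X_n^{(m)}$ plus an error. Since $\|X_{k,\ell}-X^{(m)}_{k,\ell}\|_\infty\le\delta(m)$ and every entry is bounded, the crude bound $\|\mathbb X_n-\mathbb X_n^{(m)}\|\le \sqrt n\,\delta(m)$ holds deterministically. Combined with the resolvent identity and $\|(zI_n-A)^{-1}\|\le \Im m(z)^{-1}$, this gives
\[
\|(zI_n-\mathbb X_n)^{-1}\|\le \|(zI_n-\mathbb X_n^{(m)})^{-1}\|+\frac{\sqrt n\,\delta(m)}{\Im m(z)^2}.
\]
The factor $2$ in the statement leaves room for this step. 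The task therefore reduces to controlling $\|(zI_n-\mathbb X_n^{(m)})^{-1}\|$ with high probability.

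For this we use Markov's inequality on traces. Since $\|A\|^{2p}\le \Tr |A|^{2p}=n\,\tr|A|^{2p}$, we get
\[
\Prob\Big(\|(zI_n-\mathbb X_n^{(m)})^{-1}\|\ge e\,n^{1/(2p)}\big(\E\tr|zI_n-\mathbb X_n^{(m)}|^{-2p}\big)^{1/(2p)}\Big)\le e^{-2p}.
\]
Choose $p\asymp x$, so that $e^{-2p}\le e^{-x}$. Since $x\ge\log n$, the factor $n^{1/(2p)}$ is bounded by a constant. Next apply Proposition~\ref{propcomparaisonmomentsp} to replace the $\mathbb X_n^{(m)}$ moment by $(\E\tr|zI_n-\mathbb G_n|^{-2p})^{1/(2p)}$ plus $U_{n,m}(p,z)$. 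With $p\asymp x$, the terms of $U_{n,m}$ become the $\delta(m)$, $M^3$ and $(1+M)^4$ terms in the statement. The factor $(n/m)^{1/(2p-1)}$ is bounded because $p\gtrsim\log n$. The $(1+M)^{6/5}p^3\sqrt{m/n}$ term has to be absorbed into the others, using $p^3\le p^5$ and, when $m/n$ is small, the $m\sqrt{m/n}$ term. If this absorption fails, we would have to restrict the range of $x$ slightly; $n_0$ handles small $n$.

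It remains to bound the Gaussian moment $(\E\tr|zI_n-\mathbb G_n|^{-2p})^{1/(2p)}$ by $C\,\|(zI_n-\mathbb G_n)^{-1}\|$ plus a fluctuation term; this is the expected main obstacle. The map $g\mapsto\|(zI_n-\mathbb G_n)^{-1}\|$ is Lipschitz in the Gaussian field. Its Lipschitz constant is $\Im m(z)^{-2}$ times the Lipschitz constant of $g\mapsto\mathbb G_n$, which is $\sup_{\|v\|=1}\|\sum v_{k\ell}\,(\cdot)\|$ computed through the covariance. Because the columns are independent and the covariance along each column is summable, this constant is of order $\sigma_*/\sqrt n$. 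Indeed, the spectral norm of the column covariance operator is at most $\sum_\ell|\Cov(X_{0,0},X_{\ell,0})|=\sigma_*^2$, and $\|\mathbb G_n\|$ changes by at most $n^{-1/2}$ times the Frobenius-type change. Gaussian concentration then gives $L^{2p}$ fluctuations of order $\sqrt{p}\,\sigma_*/(\Im m(z)^2\sqrt n)$ around the median. Since $\tr|A|^{2p}\le\|A\|^{2p}$, we get $(\E\tr|\cdot|^{-2p})^{1/(2p)}\le\|\|(zI_n-\mathbb G_n)^{-1}\|\|_{2p}$. Finally, we convert the median into $\|(zI_n-\mathbb G_n)^{-1}\|$ itself on an event of probability $1-e^{-x}$ using the same concentration. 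This yields the term $\sigma_*\sqrt x/(\Im m(z)^2\sqrt n)$. A union bound over the two events, with $x$ rescaled by constants, concludes.
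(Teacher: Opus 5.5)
Your proposal follows the paper's route. You pass deterministically from $\mathbb X_n$ to $\mathbb X_n^{(m)}$ by the resolvent identity; your Frobenius bound $\sqrt n\,\delta(m)$ is even slightly sharper than the paper's $2\sqrt n\,\delta(m)$. You then apply Markov's inequality at order $2p\asymp x$, and run the argument of Lemma~7.3 of \cite{brailovskayaVanHandel} with Proposition~\ref{propcomparaisonmomentsp} in place of their Theorem~6.8. The paper writes out only the first two steps and defers the rest to that lemma. Your Gaussian concentration step, with Lipschitz constant $\tilde\sigma_n/\Im m(z)^2\lesssim\sigma_*/(\Im m(z)^2\sqrt n)$ (cf.\ Lemma~\ref{lem:Gaussian-parameter-bounds}), and your control of $n^{1/(2p)}$ and $(n/m)^{1/(2p-1)}$ via $x\ge\log n$, are exactly what that deferral contains.

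The one step that fails is the absorption of the summand $\frac{(1+M)^{6/5}p^3}{\Im m(z)^4}\sqrt{m/n}$ of $U_{n,m}(p,z)$. With $p\asymp x$, compare it with the other explicit error terms:
\begin{itemize}
\item its ratio to the $M^3x^2m\sqrt{m/n}$ term is of order $(1+M)^{6/5}x/(M^3m)$;
\item its ratio to the $M^3x^3m/n$ term is of order $\sqrt{n/m}$;
\item its ratio to the $(1+M)^4x^5(m/n)^{3/2}$ term is of order $n/(mx^2)$.
\end{itemize}
So for $m\ll x\ll\sqrt{n/m}$ it dominates every explicit error term. The inequality $p^3\le p^5$ does not help, because the $p^5$ term carries $(m/n)^{3/2}$, not $(m/n)^{1/2}$.

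Nor can the summand be hidden in $C\|(zI_n-\mathbb G_n)^{-1}\|$. Take $m$ fixed, or of order $\log n$ with $\delta(m)$ negligible, and $x=n^{1/7}$, $\Im m(z)=x(m/n)^{1/6}$, with the real part of $z$ at distance of order one from ${\rm sp}(\mathbb G_n)$. Then the summand is of order $1/\Im m(z)\to\infty$, while every other quantity in the braces, including $\|(zI_n-\mathbb G_n)^{-1}\|$, is $O(1)$. Restricting the range of $x$ would change the statement.

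What your argument actually proves is the lemma with the additional term $\frac{(1+M)^{6/5}x^3}{\Im m(z)^4}\sqrt{m/n}$ inside the braces. That is also all the paper's argument can deliver, since it invokes the same Proposition. The displayed statement appears to have dropped this term, which is precisely the source of the contribution $t\,(\log n/n)^{1/6}$ to $\varepsilon(t)$ in Theorem~\ref{thm:Hausdorff distance}. You should carry the term explicitly rather than claim it is absorbed.
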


Assuming Proposition~\ref{propcomparaisonmomentsp}, Lemma~\ref{ineconcentration1}, and Lemma~\ref{lma73}, the proof of Theorem~\ref{spectrumonedirection} follows by choosing $m$ of logarithmic order in $n$ and applying a standard resolvent argument. The estimates stated above prove the one-sided spectral inclusion
\[
{\rm sp}(\mathbb X_n)
\subseteq
{\rm sp}(\mathbb G_n)+C\varepsilon(t)[-1,1] ,
\]
with a high probability depending on $t$ and $\varepsilon(t)$ stated in Theorem \ref{spectrumonedirection}.

The proof of the Hausdorff-distance estimate requires one further step, namely the reverse inclusion. To obtain this we need the following result, which is the analogue of 
Proposition 7.7 in \cite{brailovskayaVanHandel} but in our dependence setting. 

\begin{proposition}\label{prop:lower-bound-X}
Let $\mathbb G_n$ be the centered Gaussian Wigner-type matrix with the same covariance structure as $\mathbb X_n$. Then there exist a universal constant $ C>0$ such that, for every $t\ge 0$ and every $n \geq 1$, 
$$
\Pbb\Bigl(
\operatorname{sp}(\mathbb G_n)\subseteq \operatorname{sp}(\mathbb X_n)+C\,\varepsilon_{n,m}(t)\,[-1,1]
\Bigr)
\ge 1-n e^{-t},
$$
where
$$
\varepsilon_{n,m}(t)
=
\alpha_{1,n,m}(t)
+
\alpha_{2,n,m}(t)^{1/2}
+
\alpha_{3,n,m}(t)^{1/3},
$$
with
$$
\alpha_{1,n,m}(t)
=
\sqrt n\,\delta(m)
+
\frac{M\sqrt m}{n^{1/4}}\sqrt t
+
M\sqrt{\frac{m}{n}}\,t,
\quad
\alpha_{2,n,m}(t)
=
n^2\delta(m)\,t
+
\frac{M^2m}{\sqrt n}\sqrt t
+
\frac{M^2m}{n}\,t,
$$
and
$$
\alpha_{3,n,m}(t)
=
M^3t^2\left(
m\sqrt{\frac{m}{n}}+t\frac{m}{n}
\right)
+
(1+M)^{6/5}t^3\sqrt{\frac{m}{n}}
+
(1+M)^4t^5\left(\frac{m}{n}\right)^{3/2}.
$$
\end{proposition}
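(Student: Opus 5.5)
We plan to mirror the proof of Proposition 7.7 in \cite{brailovskayaVanHandel}, with the roles of the two matrices reversed: control the resolvent of $\mathbb G_n$ by that of $\mathbb X_n$ at every point $z=\lambda+\ii\eta$ of a discretized spectral window. The standard principle is that if $\|(zI_n-\mathbb G_n)^{-1}\|\ge 1/\eta$, then ${\rm sp}(\mathbb G_n)$ meets $\lambda+\eta[-1,1]$. Hence an inequality of the form $\|(zI_n-\mathbb X_n)^{-1}\|\gtrsim \|(zI_n-\mathbb G_n)^{-1}\|-(\text{error})$ yields that ${\rm sp}(\mathbb X_n)$ meets $\lambda+C\eta[-1,1]$, once $\eta$ is chosen so that the error is a small multiple of $1/\eta$. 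The three families $\alpha_1,\alpha_2^{1/2},\alpha_3^{1/3}$ in $\varepsilon_{n,m}(t)$ correspond exactly to balancing errors of size $\alpha_1/\eta^2$, $\alpha_2/\eta^3$ and $\alpha_3/\eta^4$ against $1/\eta$.

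\textbf{Step 1 (moment comparison in the reverse direction).} Proposition~\ref{propcomparaisonmomentsp} is symmetric in the two matrices. Combining it with the finite-memory reduction, via the Lipschitz bound $\big|\|(z-A)^{-1}\|_{2p}-\|(z-B)^{-1}\|_{2p}\big|\le \|A-B\|/\Im m(z)^2$ and the bound $\|\mathbb X_n-\mathbb X_n^{(m)}\|\le 2\sqrt n\,\delta(m)$ from the proof of Lemma~\ref{ineconcentration1}, gives
\[
\big(\E\,\tr|z-\mathbb G_n|^{-2p}\big)^{1/2p}\le \big(\E\,\tr|z-\mathbb X_n|^{-2p}\big)^{1/2p}+\frac{2\sqrt n\delta(m)}{\Im m(z)^2}+U_{n,m}(p,z).
\]

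\textbf{Step 2 (passing to high probability).} We need concentration for $\mathbb X_n$ rather than for $\mathbb G_n$, and we need it in the opposite direction, namely an upper tail for $\tr|z-\mathbb X_n|^{-2p}$ so that it stays close to its expectation. Since the columns of $\mathbb X_n$ are independent and each entry has bounded differences in the innovations (with total influence controlled by $\sum_k\delta(k)$), a McDiarmid or Talagrand-type concentration for $\|(z-\mathbb X_n)^{-1}\|_{S_{2p}}$ seen as a function of the innovations should give fluctuations of order $M\sqrt m\, n^{-1/4}\sqrt t/\eta^2+M\sqrt{m/n}\,t/\eta^2$ after the $m$-truncation. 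The first of these is the $M\sqrt m\,n^{-1/4}\sqrt t$ term in $\alpha_1$. For $\mathbb G_n$, Gaussian concentration of the Lipschitz map $\mathbb G_n\mapsto\|(z-\mathbb G_n)^{-1}\|_{S_{2p}}$ gives the matching lower tail. The terms in $\alpha_2$ arise from the $1/\eta^3$ part of $U_{n,m}$ together with the second-order variance proxy $M^2m/\sqrt n$; the terms in $\alpha_3$ are those of $U_{n,m}$ with $p\asymp t$.

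\textbf{Step 3 (from Schatten norms to operator norms).} Take $p\asymp t\ge\log n$, so that $n^{1/2p}\le e$ and $\|\cdot\|_{S_{2p}}$ normalized by $n^{1/2p}$ is comparable to the operator norm. Then discretize $\lambda$ over an interval of length $O(1+\sqrt n\delta(m)+\dots)$, which contains the spectra with the stated probability by Lemma~\ref{ineconcentration1} and by Gaussian norm bounds, using a grid of $O(n)$ points. The resolvent is $\eta^{-2}$-Lipschitz in $\lambda$, so a grid of this size suffices, and a union bound produces the factor $n e^{-t}$. Choosing $\eta=C\varepsilon_{n,m}(t)$ closes the argument.

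The main obstacle is Step 2, the upper-tail concentration of $\tr|z-\mathbb X_n|^{-2p}$ for the dependent, non-Gaussian matrix. Here we would first try the bounded-differences inequality applied column by column, after replacing $\mathbb X_n$ by the $m$-dependent $\mathbb X_n^{(m)}$. Changing one innovation $\varepsilon_{u,\ell}$ alters at most $m+1$ entries of column $\ell$, which perturbs the matrix by an amount of operator norm $O(M\sqrt{m/n})$ and Frobenius norm $O(M\sqrt{m/n})$. Summing the squared influences then yields the stated variance scale.
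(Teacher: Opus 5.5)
Your overall architecture matches the paper's. It consists of Markov's inequality on the $\mathbb G_n$ side, Proposition~\ref{propcomparaisonmomentsp} read in reverse together with $\|\mathbb X_n-\mathbb X_n^{(m)}\|\le 2\sqrt n\,\delta(m)$, a concentration inequality for the resolvent of $\mathbb X_n$, and then the net over ${\rm sp}(\mathbb G_n)+\ii\varepsilon$ and \cite[Lemma~7.1]{brailovskayaVanHandel}. The gap is in Step~2, which carries all the real work, and the mechanism you propose there fails. The matrix $\mathbb X_n^{(m)}$ depends on order $n^2$ independent innovations $\varepsilon_{u,\ell}$. You correctly bound the worst-case influence of each one on $\|(zI_n-\mathbb X_n^{(m)})^{-1}\|$ by $c_{u,\ell}\asymp M\sqrt{m/n}/\Im m(z)^2$. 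The same holds for the normalized Schatten-$2p$ version: the perturbation has rank at most two, which only gains a factor $(2/n)^{1/2p}$, and this is bounded below once $p\ge\log n$. McDiarmid's variance proxy is then $\sum_{u,\ell}c_{u,\ell}^2\asymp n^2\cdot M^2m/(n\,\Im m(z)^4)=M^2mn/\Im m(z)^4$. The resulting fluctuation bound is of order $M\sqrt{mn}\,\sqrt t/\Im m(z)^2$, which grows with $n$, rather than $M\sqrt m\,n^{-1/4}\sqrt t/\Im m(z)^2$. So your claim that summing the squared influences yields the stated variance scale is false. Worst-case influences cannot see that perturbations in different columns act in nearly orthogonal directions relative to the top singular vectors of the resolvent. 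A Talagrand convex-distance argument does not rescue this either, since $X\mapsto\|(zI_n-X)^{-1}\|$ is not convex and the entries are nonlinear in the innovations.

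What is needed is a variance-sensitive inequality. Its leading term should be the weak variance $\tilde\sigma^2=\sup_{\|u\|=\|v\|=1}\sum\E|\langle u,Yv\rangle|^2\lesssim M^2m/n$, with second-order corrections involving $R\lesssim M\sqrt{m/n}$ and $\E\|X-\E X\|\lesssim M\sqrt m$. This is \cite[Proposition~5.6]{brailovskayaVanHandel}, obtained from an exponential Poincar\'e inequality plus a self-bounding estimate. Its terms $R^{1/2}(\E\|X-\E X\|)^{1/2}\asymp M\sqrt m\,n^{-1/4}$ and $R(\E\|X-\E X\|^2)^{1/2}\asymp M^2m/\sqrt n$ are exactly where the $n^{-1/4}$ term of $\alpha_{1,n,m}$ and the $M^2m/\sqrt n$ term of $\alpha_{2,n,m}$ come from. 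That result requires a sum of independent matrices, which $\mathbb X_n^{(m)}$ is not. The paper bridges this in Propositions~\ref{prop:conditional-concentration} and~\ref{prop:X-from-Xm} by alternating-block conditioning. Freezing the even blocks $\vec\eta_{\mathbf 2}$ makes $\mathbb X^{(m)}$ a sum of independent block matrices, to which \cite[Proposition~5.6]{brailovskayaVanHandel} applies. The fluctuation of $\E_{\mathcal F_m}\|(zI_n-\mathbb X^{(m)})^{-1}\|$ is handled by fixing the odd blocks, proving the self-bounding mgf bound in the even blocks uniformly in the frozen values, and averaging by Jensen. The resulting two-sided tail is converted into $\E[\|(zI_n-\mathbb X_n)^{-1}\|^{2p}]^{1/2p}\le\E\|(zI_n-\mathbb X_n)^{-1}\|+C\xi_{n,m}(p,z)$ via \cite[Theorem~2.3]{BLM}. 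Its lower tail is then used again to replace the mean by the realized $\|(zI_n-\mathbb X_n)^{-1}\|$. In this direction you need both tails for $\mathbb X_n$, while no concentration for $\mathbb G_n$ is required, since Markov's inequality already handles that side.
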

This proposition implies the following counter part of Theorem \ref{spectrumonedirection}.
\begin{theorem} \label{spectrumotherdirection}
Assume condition \eqref{conddelta}. 
Let $\kappa >0$. There exists a positive constant $C$, depending on $K$, $c$,
and $M$, such that, for any $t \geq 0$ and every $\kappa $,
\[
\Prob \Big (  {\rm sp } (   {\mathbb G}_n ) \subseteq     {\rm sp } (   {\mathbb X}_n )  + C  \varepsilon (t)   [-1,1]  \Big )   \geq  1-  ne^{-t} \, ,
\]
where $\varepsilon(t)$ is defined by \eqref{defepsilont}. 
\end{theorem}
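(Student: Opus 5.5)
Theorem~\ref{spectrumotherdirection} follows from Proposition~\ref{prop:lower-bound-X} by choosing the memory parameter $m$ suitably, as in the deduction of Theorem~\ref{spectrumonedirection}. Take $m = \lceil \lambda \log n\rceil$, with $\lambda$ depending on $c$ and $\kappa$, large enough that under \eqref{conddelta}
\[
\delta(m)\le K e^{-c\lambda\log n} = K n^{-c\lambda}\le K n^{-5/2-\kappa/2}.
\]
This choice is allowed since $m\in\{1,\dots,n-1\}$ for $n$ large; small $n$ are absorbed into the constant $C$, because $d_H$ is trivially bounded on the event of Lemma~\ref{ineconcentration1}. Then $\sqrt n\,\delta(m)\le K n^{-2-\kappa/2}$ and $(n^2\delta(m)t)^{1/2}\le K^{1/2}n^{-1/4-\kappa/4}\sqrt t$. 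Both are dominated by $n^{-\kappa/2}\sqrt t$ up to constants; if needed, enlarge $\lambda$ so the exponent is at least $\kappa/2$.

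Next, bound each remaining term of $\varepsilon_{n,m}(t)$ with $m\asymp\log n$ by a term of \eqref{defepsilont}, using only $t\ge0$ and monotonicity.
\begin{itemize}
\item From $\alpha_1$: $M\sqrt m\,n^{-1/4}\sqrt t$ and $M\sqrt{m/n}\,t$.
\item From $\alpha_2^{1/2}$: $(M^2 m n^{-1/2}\sqrt t)^{1/2}\lesssim (\log n)^{1/2}n^{-1/4}t^{1/4}$ and $(M^2 m t/n)^{1/2}$.
\item From $\alpha_3^{1/3}$:
\begin{itemize}
\item $(t^2 m^{3/2}n^{-1/2})^{1/3}\asymp t^{2/3}(\log n)^{1/2}n^{-1/6}$, which matches the third term of \eqref{defepsilont} exactly;
\item $(t^3 m/n)^{1/3}=t(\log n/n)^{1/3}$;
\item $(t^3\sqrt{m/n})^{1/3}=t(\log n/n)^{1/6}$, the fourth term;
\item $(t^5(m/n)^{3/2})^{1/3}=t^{5/3}(\log n/n)^{1/2}$, the fifth term.
\end{itemize}
\end{itemize}

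The lower-order terms coming from $\alpha_1,\alpha_2$ carry powers of $t$ ranging over $[1/4,1]$ and polynomial factors at most $n^{-1/4}(\log n)^{1/2}$. To absorb them, split into the cases $t\le1$ and $t\ge1$. If $t\ge 1$, every such term is at most $t(\log n/n)^{1/6}$ or $t^{2/3}(\log n)^{1/2}n^{-1/6}$, since these have the weaker decay $n^{-1/6}$ and the larger power of $t$. If $t\le 1$, the target bound $ne^{-t}\ge n/e>1$ for $n\ge3$, so the statement is vacuous. The same remark handles $t<\log n$ in general, so we may assume $t\ge\log n\ge1$ throughout. This gives $\varepsilon_{n,m}(t)\le C'\varepsilon(t)$ with $C'$ depending on $K,c,\kappa,M$. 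The theorem then follows from Proposition~\ref{prop:lower-bound-X} by monotonicity of the inclusion event in the width of the neighbourhood.

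The main obstacle is checking that the $\delta(m)$-terms with polynomial prefactors (notably $n^2\delta(m)$) are beaten by the exponential decay while $m$ stays of order $\log n$. The choice $\lambda\ge(5+\kappa)/(2c)$ handles this. Note that the term $\sigma_*\sqrt t/\sqrt n$ is not needed in this direction; it only makes the bound weaker, so including it is harmless.
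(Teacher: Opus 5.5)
Your proposal is correct and is the same argument as the paper, whose proof is simply Proposition~\ref{prop:lower-bound-X} with $m=c^{-1}(2+\kappa)\log n$. Your term-by-term matching with \eqref{defepsilont}, together with the reduction to $t\ge\log n$ (otherwise $1-ne^{-t}\le 0$), just makes that one-line deduction explicit.

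Two cosmetic points. First, your threshold $\lambda\ge(5+\kappa)/(2c)$ does not give $(n^2\delta(m)t)^{1/2}\lesssim n^{-\kappa/2}\sqrt t$ when $\kappa>1$ (the paper's $\lambda=(2+\kappa)/c$ does); however, the leftover $n^{-1/4-\kappa/4}\sqrt t$ is absorbed by $t^{2/3}(\log n)^{1/2}n^{-1/6}$ for $t\ge1$ anyway. Second, for small $n$ the inclusion of ${\rm sp}(\mathbb G_n)$ needs a bound on $\|\mathbb G_n\|$, which Gaussian concentration supplies; Lemma~\ref{ineconcentration1} only concerns $\mathbb X_n$ and does not give this.
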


These intermediate estimates are proved in Section \ref{Sectionintermediate}.

\section{A free comparison theorem for the model given by \eqref{eq:Xkl-def-1D}}\label{Sectionfree}

\subsection{Free comparison}
We now derive a free-probabilistic comparison theorem for the dependent Wigner-type matrix $\mathbb X_n$ defined by \eqref{eq:X-matrix-def} where its entries are given by  \eqref{eq:Xkl-def-1D}. The aim is to compare the spectrum of $\mathbb X_n$ with that of a noncommutative model built from semicircular variables and having the same covariance structure. This result is in the spirit of \cite[Theorem~2.16]{brailovskayaVanHandel}, but the argument requires one important modification. The sharp Gaussian comparison theorem of  \cite[Theorem~2.1]{BandeiraBoedihardjoVanHandel} cannot be applied directly to $\mathbb X_n$, since the entries of $\mathbb X_n$ are
nonlinear causal functionals and are dependent along each column. Instead, we first compare $\mathbb X_n$ with its covariance-matched Gaussian analogue $\mathbb G_n$, and only then apply the Gaussian free comparison theorem to $\mathbb G_n$.

Recall that \cite[Theorem~2.1]{BandeiraBoedihardjoVanHandel} applies to a self-adjoint Gaussian matrix of the form
\begin{equation}\label{eq:H_Gaussian}
    H
    =
    A_0+\sum_{k=1}^N \xi_k A_k,
\end{equation}
where $A_0,A_1,\dots,A_N\in M_n(\mathbb C)_{\mathrm{sa}}$ are deterministic matrices and $\xi_1,\dots,\xi_N$ are independent standard real Gaussian variables. The associated free model is
\begin{equation}\label{eq:H_Free}
    H_{\mathrm{free}}
    =
    A_0\otimes 1
    +
    \sum_{k=1}^N A_k\otimes s_k,
\end{equation}
where $s_1,\dots,s_N$ is a free standard semicircular family in a tracial $C^\ast$-probability space. The theorem compares the spectrum of $H$ with that of $H_{\mathrm{free}}$. Assume that $(X_{k, \ell})_{k,\ell\in\mathbb Z}$ are given by \eqref{eq:Xkl-def-1D} and recall that the covariance-matched Gaussian matrix is
$$
    \mathbb G_n
    =
    \frac1{\sqrt n}\sum_{1\leq j \leq i \leq n}g_{i,j} E_{i,j},
$$
where the centered Gaussian field $(g_{i,j})_{1\leq j \leq i \leq n}$ has covariance
$$
    \operatorname{Cov}(g_{i,j},g_{i',j'})
    =
    \operatorname{Cov}(X_{i,j},X_{i',j'}),
$$
for $1\leq j \leq i \leq n$ and $1\leq j' \leq i' \leq n$.
The preceding coordinate representation of $\mathbb G_n$ is not yet written in the Gaussian matrix-series form required by \cite[Theorem~2.1]{BandeiraBoedihardjoVanHandel}, because the random variables $(g_{i,j})_{1\leq j \leq i \leq n}$ may not be independent. This is, however, only
a matter of representation which we  make explicit and use it to define the corresponding free semicircular model. We start by noting that there exists  iid standard Gaussian r.v.'s $(\xi_{i,j})_{1\leq i,j\leq n}$, such that 
$$
g_{i,j}
=
\sum_{k=1}^{n}
(\Gamma_n^{1/2})_{i,k}\xi_{k,j}
\qquad\text{almost surely}.
$$
where  $\Gamma_n = \big ( {\rm Cov} (g_{i,1}, g_{j,1} ) \big )_{1 \leq i,j \leq n}$. We then  define the matrix 
$$ 
A_{(k,\ell), n }=\frac1{\sqrt n}\sum_{i=\ell}^n ( \Gamma^{1/2}_n )_{i,k} E_{i,\ell}\in M_n(\mathbb C)_{\mathrm{sa}},
$$ 
and write
\begin{equation}\label{eq:Gaussian-matrix-representation}
    \mathbb G_n
    =
    \frac1{\sqrt n}\sum_{1\leq j \leq i \leq n} g_{i,j} E_{i,j}
    =
    \sum_{ k,\ell=1}^n  \xi_{k,\ell} A_{(k,\ell), n }.
\end{equation}
Now $\mathbb G_n$ is exactly of the form required by
\cite[Theorem~2.1]{BandeiraBoedihardjoVanHandel}, therefore one can apply this theorem in order to compare $\mathbb G_n$ and the associated free model. We may therefore define the free model associated with $\mathbb X_n$, or equivalently with $\mathbb G_n$, as follows. Let $(\mathcal A,\tau)$ be a tracial $C^\ast$-probability space, and let $(s_{k,\ell})_{1\leq k,\ell\leq n}$ be a free standard semicircular family in $\mathcal A$. Set
\beq \label{defofXfree}
    \mathbb X_n^{\mathrm{free}}
    :=
    \sum_{ k,\ell =1}^n A_{(k,\ell),n}\otimes s_{k,\ell}
    \in M_n(\mathbb C)_{\mathrm{sa}}\otimes\mathcal A .
\eeq
The spectrum $\operatorname{sp}(\mathbb X_n^{\mathrm{free}})$ is the $C^\ast$-spectrum of the
self-adjoint element $\mathbb X_n^{\mathrm{free}}$, and is deterministic. Define
$$
    \tilde{\sigma}_n^2
    =
    \sup_{\|u\|=\|v\|=1}
    \mathbb E\bigl[
        |\langle u,\mathbb G_n v\rangle|^2
    \bigr], \quad \sigma_n^2
    =
    \left\|\mathbb E[\mathbb G_n^2]\right\|, \quad 
    v_n^2
    =
    \|\operatorname{Cov}(\mathbb G_n)\|,
$$
where $\operatorname{Cov}(\mathbb G_n) $ denotes the $n^2\times n^2$ covariance matrix of the entries of $\mathbb G_n$.
\begin{lemma}
\label{lem:Gaussian-parameter-bounds}
Let $\sigma_*^2=  \sum_{\ell  \geq 0} \big|  \Cov (  X_{0,0} ,    X_{\ell,0}  )\big|$. Then there exists a universal constant $C>0$ such that
\begin{equation}
\label{UBquantities}
\tilde{\sigma}_n^2
\leq
\frac{C\sigma_\ast^2}{ n},
\qquad
\sigma_n^2
\leq
C\sigma_\ast^2,
\qquad
v_n^2
\leq
\frac{C\sigma_\ast^2}{ n}.
\end{equation}
\end{lemma}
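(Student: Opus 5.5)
The idea is to express all three quantities through the covariance structure of the Gaussian field and to exploit two facts: different columns are independent (model \eqref{eq:Xkl-def-1D}), and within a column the covariance is stationary,
\[
\Cov(g_{i,j},g_{i',j}) = \gamma(i-i'), \qquad \gamma(r):=\Cov(X_{0,0},X_{r,0}),
\]
with $\sum_{r\in\mathbb Z}|\gamma(r)|\le 2\sigma_*^2$. The common tool is a Schur test: for any symmetric matrix $T$ with entries $(T)_{ab}$, $\|T\|\le \max_a\sum_b |T_{ab}|$.

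\emph{The covariance $v_n^2$.} The entries of $\mathbb G_n$ are indexed by pairs $(a,b)$, with $(\mathbb G_n)_{ab}=n^{-1/2}g_{\max(a,b),\min(a,b)}$. Write $(i,j)$ for the pair associated with $(a,b)$ and $(i',j')$ for the one associated with $(a',b')$. Then
\[
\Cov\big((\mathbb G_n)_{ab},(\mathbb G_n)_{a'b'}\big)=n^{-1}\1_{j=j'}\,\gamma(i-i').
\]
Fix a row $(a,b)$ of $\Cov(\mathbb G_n)$. The number of $(a',b')$ giving the same pair $(i',j')$ is at most $2$. Summing over $i'$ therefore gives a row sum of at most $2n^{-1}\sum_r|\gamma(r)|\le 4\sigma_*^2/n$, and the Schur test yields $v_n^2\le C\sigma_*^2/n$.

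\emph{The weak variance $\tilde\sigma_n^2$.} Since $\mathbb E|\langle u,\mathbb G_nv\rangle|^2 = w^*\Cov(\mathbb G_n)w$ with $w=(\bar u_a v_b)_{a,b}$ and $\|w\|=\|u\|\|v\|=1$, this is bounded by $v_n^2$. Hence $\tilde\sigma_n^2\le v_n^2\le C\sigma_*^2/n$ follows immediately.

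\emph{The matrix variance $\sigma_n^2$.} This is the only step requiring care, because $\mathbb E[\mathbb G_n^2]$ is not diagonal under dependence. We have
\[
(\mathbb E\mathbb G_n^2)_{ac}=\sum_b \mathbb E[(\mathbb G_n)_{ab}(\mathbb G_n)_{bc}] = n^{-1}\sum_b \Cov\big(g_{(a,b)},g_{(b,c)}\big),
\]
where $g_{(a,b)}$ denotes $g_{\max,\min}$. This term is nonzero only if the two pairs share the same column index $\min$. We bound $\sum_c |(\mathbb E\mathbb G_n^2)_{ac}|\le n^{-1}\sum_{b,c}|\Cov(g_{(a,b)},g_{(b,c)})|$ and split by the relative orders of $a,b,c$.

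If $b\le a$, the column of the first pair is $b$. The second pair must then have column $b$ too, so $c\ge b$ and the pair is $(c,b)$. The covariance is $\gamma(a-c)$, and summing over $c$ gives at most $2\sigma_*^2$. Summing over the at most $n$ values of $b$ gives $2n\sigma_*^2$.

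If $b>a$, the first pair is $(b,a)$ with column $a$. The second pair needs column $a$, which forces $c=a$ (since $b>a$ means $\min(b,c)=a$ only if $c=a$). The covariance is then $\gamma(0)$, and summing over $b$ gives at most $n\gamma(0)\le n\sigma_*^2$.

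Multiplying by $n^{-1}$ gives a row sum of at most $3\sigma_*^2$. Since $\mathbb E\mathbb G_n^2$ is symmetric, the Schur test yields $\sigma_n^2\le C\sigma_*^2$.

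The main (though mild) obstacle is the careful bookkeeping in this last index split, checking that the shared-column constraint leaves only one free summation over the stationary lag. Everything else is direct.
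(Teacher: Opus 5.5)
Your proof is correct, but it takes a different route from the paper's.

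\textbf{The paper's route.} The paper first applies the Schur test to the single $n\times n$ Toeplitz matrix $\Gamma_n=(\gamma(i-k))_{i,k}$, giving $\|\Gamma_n\|\le 2\sigma_*^2$. It then writes each of the three quantities as a sum over columns $\ell$ of quadratic forms $c^{T}\Gamma_n c$. The vector $c$ is built from $\langle u,E_{i,\ell}v\rangle$, from the coordinates of $E_{i,\ell}v$, or from $\Tr(E_{i,\ell}T)$. The last case uses the variational formula for $v_n$ from Bandeira--Boedihardjo--van Handel together with the representation through $A_{(k,\ell),n}$ and $\Gamma_n^{1/2}$. Each form is bounded by $\|\Gamma_n\|\,\|c\|^2$, and the sum is closed with counting identities such as $\sum_{\ell\le i}\|E_{i,\ell}v\|^2=n\|v\|^2$.

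\textbf{Your route.} You apply the Schur test directly to the $n^2\times n^2$ matrix $\Cov(\mathbb G_n)$ and to $\mathbb E[\mathbb G_n^2]$, with explicit index bookkeeping. You then get $\tilde\sigma_n^2\le v_n^2$ for free from $w^*\Cov(\mathbb G_n)w\le v_n^2$ with $w=(\bar u_a v_b)_{a,b}$. The case split for $\mathbb E[\mathbb G_n^2]$ is right. For $b>a$, the shared-column constraint forces $c=a$, so the second pair coincides with the first and contributes only $\gamma(0)$.

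\textbf{Comparison.} Your version is shorter and self-contained. It needs neither the $\Gamma_n^{1/2}$ representation nor the variational characterization of $v_n$. The costs are a slightly worse constant for $\sigma_n^2$ ($3$ instead of $2$) and, more substantively, that a Schur test on the large matrices only sees $\sum_r|\gamma(r)|$. The paper's factorization shows that all three bounds hold with $\sigma_*^2$ replaced by $\|\Gamma_n\|$. That quantity can be much smaller, since it is controlled by the supremum of the spectral density rather than by absolute summability of the covariances. For the lemma as stated, in terms of $\sigma_*^2$, both arguments suffice.
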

The proof of the lemma is postponed to Appendix \ref{Appendix:free}.  Note that since $\big|  \Cov (  X_{0,0} ,    X_{\ell,0}  )\big| \leq \Vert X_{0,0} \Vert_1 \delta(\ell)$, therefore under condition \eqref{conddelta}, $\sigma_*^2$ is finite. Taking into account our Theorem \ref{spectrumonedirection} and applying  \cite[Theorem~2.1]{BandeiraBoedihardjoVanHandel} to the Gaussian matrix $\mathbb G_n$, we get the following comparison result.

\begin{theorem}[Free comparison for the dependent model]
\label{thm:dependent-free-comparison}
Let $(X_{k, \ell})_{k, \ell \in \mathbb Z}$ be defined by \eqref{eq:Xkl-def-1D},  $\mathbb X_n$ by \eqref{eq:X-matrix-def} and $\mathbb X_n^{\mathrm{free}}$ by \eqref{defofXfree}. Let $\varepsilon(t)$ be defined by \eqref{defepsilont}, and fix $\kappa>0$. Then under condition \eqref{conddelta} there exists a constant
$C=C(K,c,M,\kappa)>0$ such that, for every $t\ge0$ and every $n\geq 1$,
\begin{equation}\label{eq:sp-free}
    \mathbb P\!\left(
        \operatorname{sp}(\mathbb X_n)
        \subseteq
        \operatorname{sp}(\mathbb X_n^{\mathrm{free}})
        +
        C \Big (\varepsilon(t)
    +
     \sigma_* \frac{(\log n)^{3/4}}{n^{1/4}}
   \Big )[-1,1]
    \right)
    \ge
    1-ne^{-t}.
\end{equation}
In particular,
\begin{equation}\label{eq:lam-free}
    \mathbb P\!\left(
        \lambda_{\max}(\mathbb X_n)
        \le
        \lambda_{\max}(\mathbb X_n^{\mathrm{free}})
        +
       C \Big (\varepsilon(t)
    +
     \sigma_* \frac{(\log n)^{3/4}}{n^{1/4}}
   \Big )
    \right)
    \ge
    1-ne^{-t},
\end{equation}
and
\begin{equation}
\label{eq:dependent-Khintchine-expectation}
    \mathbb E\|\mathbb X_n\|
    \leq
    \|\mathbb X_n^{\mathrm{free}}\|+C\varepsilon(3\log n)
    +Mn^{-3/2}+C\sigma_*\frac{(\log n)^{3/4}}{n^{1/4}}.
\end{equation}
\end{theorem}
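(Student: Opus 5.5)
We will chain Theorem~\ref{spectrumonedirection} with the Gaussian--free comparison of \cite[Theorem~2.1]{BandeiraBoedihardjoVanHandel}, applied to the representation \eqref{eq:Gaussian-matrix-representation} of $\mathbb G_n$. That representation has $A_0=0$ and coefficient matrices $A_{(k,\ell),n}$, and its free counterpart is exactly $\mathbb X_n^{\mathrm{free}}$. With the parameters $\sigma_n,\tilde\sigma_n,v_n$ of Lemma~\ref{lem:Gaussian-parameter-bounds}, that theorem gives, for all $s\ge0$,
\[
\Pbb\Big(\operatorname{sp}(\mathbb G_n)\subseteq \operatorname{sp}(\mathbb X_n^{\mathrm{free}})+C\big\{v_n^{1/2}\sigma_n^{1/2}(\log n)^{3/4}+\tilde\sigma_n\sqrt{s}\big\}[-1,1]\Big)\ge 1-e^{-s}.
\]
By \eqref{UBquantities}, $v_n^{1/2}\sigma_n^{1/2}\le C\sigma_*n^{-1/4}$ and $\tilde\sigma_n\le C\sigma_*n^{-1/2}$. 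The Gaussian-to-free error is therefore at most $C\sigma_*(\log n)^{3/4}n^{-1/4}+C\sigma_*\sqrt{s/n}$.

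To prove \eqref{eq:sp-free}, take $s=t$. The term $\sigma_*\sqrt{t/n}$ is already the first summand of $\varepsilon(t)$, so it is absorbed. On the intersection of the event of Theorem~\ref{spectrumonedirection} with the event above, the triangle inclusion $A\subseteq B+a[-1,1]$, $B\subseteq D+b[-1,1]\Rightarrow A\subseteq D+(a+b)[-1,1]$ gives the claim. This intersection has probability at least $1-ne^{-t}-e^{-t}$. To reach exactly $1-ne^{-t}$, apply Theorem~\ref{spectrumonedirection} with $t$ replaced by $t+\log 2$ and the Gaussian bound with $s=t+\log(2/n)$ (or $s=t+\log 2$), then enlarge $C$. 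This works because $\varepsilon(t+\log2)\le C\varepsilon(t)$ for $t\ge1$, and for $t<1$ the bound $ne^{-t}\ge n/e\ge1$ is trivial when $n\ge3$, with small $n$ absorbed into $C$.

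\eqref{eq:lam-free} follows immediately. Indeed, $\lambda_{\max}(\mathbb X_n)\in\operatorname{sp}(\mathbb X_n)$, and every point of $\operatorname{sp}(\mathbb X_n^{\mathrm{free}})$ is at most $\lambda_{\max}(\mathbb X_n^{\mathrm{free}})$.

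For \eqref{eq:dependent-Khintchine-expectation}, apply \eqref{eq:sp-free} with $t=3\log n$. Since $\|\cdot\|=\max(\lambda_{\max},-\lambda_{\min})$ and both spectra are contained in $[-\|\cdot\|,\|\cdot\|]$, on the good event we get
\[
\|\mathbb X_n\|\le\|\mathbb X_n^{\mathrm{free}}\|+C\big(\varepsilon(3\log n)+\sigma_*(\log n)^{3/4}n^{-1/4}\big).
\]
The bad event has probability at most $n\cdot n^{-3}=n^{-2}$. On it we use the deterministic bound $\|\mathbb X_n\|\le\|\mathbb X_n\|_{\mathrm{HS}}\le n\cdot n^{-1/2}M=M\sqrt n$. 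Its contribution to the expectation is therefore at most $M\sqrt n\,n^{-2}=Mn^{-3/2}$, which yields the stated inequality.

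The main obstacle is bookkeeping rather than conception. One point is checking that the hypotheses and the precise form of \cite[Theorem~2.1]{BandeiraBoedihardjoVanHandel} match our normalization; in particular, the $(\log n)^{3/4}$ factor comes from the dimension $n$, and the $A_{(k,\ell),n}$ must be self-adjoint, which holds because the $E_{i,\ell}$ are. The other is the constant adjustment needed to keep the probability at exactly $1-ne^{-t}$ after the union bound. Both are handled by enlarging $C$ as above.
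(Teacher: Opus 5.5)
Your argument is correct. For \eqref{eq:sp-free} and \eqref{eq:lam-free} it is the paper's argument: chain Theorem~\ref{spectrumonedirection} with \cite[Theorem~2.1]{BandeiraBoedihardjoVanHandel} applied to \eqref{eq:Gaussian-matrix-representation}, insert \eqref{UBquantities}, and absorb $\tilde\sigma_n\sqrt t$ into the first summand of $\varepsilon(t)$. You are in fact more careful than the paper. The paper stops at probability $1-(n+1)e^{-t}$ and asserts the $1-ne^{-t}$ form without comment, and your shift $t\mapsto t+\log 2$ fills this in. One small correction: for $n=1$ and small $t$ this is not literally an absorption into $C$, since there $\varepsilon(t)\to 0$. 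A Chebyshev bound for the scalar $X_{1,1}$ against $\operatorname{sp}(\mathbb X_1^{\mathrm{free}})=[-2\|X_{0,0}\|_2,2\|X_{0,0}\|_2]$ settles that degenerate case.

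The real difference is in \eqref{eq:dependent-Khintchine-expectation}. You read it off directly from \eqref{eq:sp-free} at $t=3\log n$. The paper instead splits the expectation already at the $\mathbb X_n\to\mathbb G_n$ link. Theorem~\ref{spectrumonedirection} alone gives \eqref{eq:expectation-X-to-G}, namely $\mathbb E\|\mathbb X_n\|\le\mathbb E\|\mathbb G_n\|+C\varepsilon(3\log n)+Mn^{-3/2}$. The paper then finishes with the expectation form \cite[Corollary~2.2]{BandeiraBoedihardjoVanHandel}, $\mathbb E\|\mathbb G_n\|\le\|\mathbb X_n^{\mathrm{free}}\|+C(v_n\sigma_n)^{1/2}(\log n)^{3/4}$.

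Your route needs only the tail bound of \cite{BandeiraBoedihardjoVanHandel}, with the extra $\tilde\sigma_n\sqrt{3\log n}$ term absorbed into $\varepsilon(3\log n)$. However, getting coefficient exactly $1$ in front of $Mn^{-3/2}$ relies on having \eqref{eq:sp-free} with exactly $1-ne^{-t}$, hence your constant bookkeeping. The paper's route avoids that issue, since Theorem~\ref{spectrumonedirection} already carries the exact $1-ne^{-t}$. It also produces the intermediate Gaussian comparison \eqref{eq:expectation-X-to-G}, which is reused in the Khintchine discussion that follows.
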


\begin{proof}  By Theorem~\ref{spectrumonedirection}, for every $t\ge0$,
$$
    \mathbb P\!\left(
        \operatorname{sp}(\mathbb X_n)
        \subseteq
        \operatorname{sp}(\mathbb G_n)
        +
        C\varepsilon(t)[-1,1]
    \right)
    \ge
    1-ne^{-t}.
$$
Denote the  event inside the probability by $\Omega_1(t)$. As shown above, the centered Gaussian matrix $\mathbb G_n$ admits the
representation \eqref{eq:Gaussian-matrix-representation}. 
Therefore \cite[Theorem~2.1]{BandeiraBoedihardjoVanHandel} applies to $\mathbb G_n$. Its
associated free model is precisely given by \eqref{defofXfree}. 
Consequently, for every $t\ge0$,
$$
    \mathbb P\!\left(
        \operatorname{sp}(\mathbb G_n)
        \subseteq
        \operatorname{sp}(\mathbb X_n^{\mathrm{free}})
        +
        C\Bigl\{
            v_n^{1/2}\sigma_n^{1/2}(\log n)^{3/4}
            +
            \tilde{\sigma}_n \sqrt t
        \Bigr\}[-1,1]
    \right)
    \ge
    1-e^{-t}.
$$
Denote by $\Omega_2(t)$ the event appearing inside the probability. Then, on the event $\Omega_1(t)\cap\Omega_2(t)$, we obtain the following bound:
$$
\begin{aligned}
    \operatorname{sp}(\mathbb X_n)
    &\subseteq
    \operatorname{sp}(\mathbb G_n)
    +
    C\varepsilon(t)[-1,1] \\
    &\subseteq
    \operatorname{sp}(\mathbb X_n^{\mathrm{free}})
    +
    C\Bigl\{
        \varepsilon(t)
        +
        v_n^{1/2}\sigma_n^{1/2}(\log n)^{3/4}
        +
        \tilde{\sigma}_n\sqrt t
    \Bigr\}[-1,1].
\end{aligned}
$$
By the union bound,
$$
    \mathbb P(\Omega_1(t)\cap\Omega_2(t))
    \ge
    1-(n+1)e^{-t}.
$$
Together with the upper bounds \eqref{UBquantities}, this proves \eqref{eq:sp-free}.

Let $A$ and $B$ be self-adjoint, if
$
    \operatorname{sp}(A)
    \subset
    \operatorname{sp}(B)+\eta[-1,1],
$
then
$
    \lambda_{\max}(A)
    \le
    \lambda_{\max}(B)+\eta.
$
Applying this results with
 $A=\mathbb X_n$, $B=\mathbb X_n^{\mathrm{free}}$ and  $\eta=C \Big (\varepsilon(t)+ \sigma_* \frac{(\log n)^{3/4}}{n^{1/4}}\Big)$, gives \eqref{eq:lam-free}.

Finally, we fix $n\geq2$ and set $t_n=3\log n$. On the event $\Omega_1(t_n)$ introduced
above, the first spectral comparison and self-adjointness give
$$
    \|\mathbb X_n\|
    \leq
    \|\mathbb G_n\|+C\varepsilon(t_n),
$$
while 
\beq \label{boundofomega1c}
    \mathbb P(\Omega_1(t_n)^c)
    \leq
    ne^{-t_n}=n^{-2}.
\eeq
As the entries of $\mathbb X_n$ are bounded by $M$, we also have 
\beq \label{boundofnorm2Xn}
    \|\mathbb X_n\|^2
    \leq
    \Tr [\mathbb X_n^{T} \mathbb X_n]
    =\frac1n\sum_{i=1}^n X_{i,i}^2
    +\frac2n\sum_{1\leq\ell<i\leq n}X_{i,\ell}^2
    \leq M^2n.
\eeq
Splitting the expectation over $\Omega_1(t_n)$ and its complement and taking into account \eqref{boundofomega1c} and \eqref{boundofnorm2Xn} yield
\begin{equation}
\label{eq:expectation-X-to-G}
    \mathbb E\|\mathbb X_n\|
    \leq
    \mathbb E\|\mathbb G_n\|
    +C\varepsilon(3\log n)
    +Mn^{-3/2}.
\end{equation}
On the other hand, \cite[Corollary~2.2]{BandeiraBoedihardjoVanHandel} and
\eqref{UBquantities} imply
\[
    \mathbb E\|\mathbb G_n\|\leq
    \|\mathbb X_n^{\mathrm{free}}\|
    +C(v_n\sigma_n)^{1/2}(\log n)^{3/4}\leq
    \|\mathbb X_n^{\mathrm{free}}\|
    +C\sigma_*\frac{(\log n)^{3/4}}{n^{1/4}}.
\]
Finally, combining the two bounds above proves 
\eqref{eq:dependent-Khintchine-expectation}. 
\end{proof}

\subsection{Khintchine-type consequences of free comparison}
We conclude by placing Theorem~\ref{thm:dependent-free-comparison} in the
context of noncommutative Khintchine theory, initiated by Lust-Piquard
\cite{LustPiquard1986} and further developed by Lust-Piquard and Pisier
\cite{LustPiquardPisier1991}. This theory extends the classical Khintchine
inequalities to random series with matrix coefficients. Let $H$ be the centered self-adjoint Gaussian matrix defined in
\eqref{eq:H_Gaussian}, and set $\sigma(H)^2=\bigl\|\mathbb E[H^2]\bigr\|$, the classical noncommutative Khintchine inequality yields
\begin{equation}
\label{eq:classical-noncommutative-Khintchine}
c\sigma(H)
\leq
\mathbb E\|H\|
\leq
C\sigma(H)\sqrt{\log(2n)},
\end{equation}
where $c,C>0$ are universal constants; see, for instance,
\cite[Section~1.1]{BandeiraBoedihardjoVanHandel}. Applying the upper bound to
the covariance-matched Gaussian model $\mathbb G_n$ and using
\eqref{eq:expectation-X-to-G}, we obtain
$$
    c\sigma_n\leq \mathbb E\|\mathbb X_n\|
    \leq
    C\sigma_n\sqrt{\log(2n)}
    +C\varepsilon(3\log n)
    +Mn^{-3/2},
$$
where $\varepsilon (t)$ is defined in \eqref{defepsilont} for $t \geq 0$.

The logarithmic factor multiplying $\sigma_n$ is unavoidable in general, in particular for diagonal series, where the problem reduces to estimating the maximum of $d$ Gaussian random variables. It may nevertheless be suboptimal when the coefficient matrices do not commute. In our setting, it turns out that free comparison allows us to remove this factor from the
leading term. Indeed, the degree-one free Khintchine inequality gives
\begin{equation}
\label{eq:free-Khintchine}
\sigma_n
\leq
\|\mathbb X_n^{\mathrm{free}}\|
\leq
2\sigma_n;
\end{equation}
see \cite[p.~208]{PisierOperatorSpaces} and \cite[Lemma~2.5]{BandeiraBoedihardjoVanHandel}. The absence of a logarithmic factor in this estimate makes the free model a natural benchmark for sharper norm estimates.

In the Gaussian setting, Bandeira, Boedihardjo, and Van Handel \cite{BandeiraBoedihardjoVanHandel} made this principle quantitative. Their free comparison theorem \cite[Theorem~2.1]{BandeiraBoedihardjoVanHandel} compares $H$ with its free analogue $H_{\mathrm{free}}$, defined in \eqref{eq:H_Free}, and yields
$$
    c\sigma(H)\mathbb E\|H\|
    \leq
    \|H_{\mathrm{free}}\|
    +
    C\,v(H)^{1/2}\sigma(H)^{1/2}(\log n)^{3/4} \, 
    \text{ where } \, 
    v(H)^2:=\|\operatorname{Cov}(H)\|.
$$
Consequently, along any sequence for which
$
    \frac{v(H)}{\sigma(H)}
    =
    o\bigl((\log n)^{-3/2}\bigr),
$
we have
$$
    c\sigma(H)\mathbb E\|H\|
    \leq
    \|H_{\mathrm{free}}\|+o(\sigma(H))
    \leq
    2\sigma(H)+o(\sigma(H)).
$$
Brailovskaya and van Handel \cite{brailovskayaVanHandel} extended this comparison mechanism to sums $Y=\sum_{i=1}^{N}Z_i$ of independent, centered, self-adjoint random matrices. Their argument first compares $Y$ with a Gaussian matrix having the same covariance structure and then invokes the Gaussian-to-free comparison, with
additional error terms accounting for the sizes of the individual summands.

Our dependent model follows the same two-step strategy, although the entries of $\mathbb X_n$ are nonlinear causal functionals and are neither independent nor jointly Gaussian. The comparison established in the previous subsection first transfers $\mathbb X_n$ to its covariance-matched Gaussian analogue $\mathbb G_n$, as quantified by \eqref{eq:expectation-X-to-G}. The subsequent Gaussian-to-free comparison yields \eqref{eq:dependent-Khintchine-expectation}, which, together with \eqref{eq:free-Khintchine}, gives
\begin{equation}
\label{eq:dependent-explicit-Khintchine}
c\sigma_n\leq\mathbb E\|\mathbb X_n\|
\leq
2\sigma_n
+C\varepsilon(3\log n)
+Mn^{-3/2}
+C\sigma_*\frac{(\log n)^{3/4}}{n^{1/4}}.
\end{equation}
In our setting, $\varepsilon(3\log n)=o(1)$ and $M$ and $\sigma_*$ are uniformly bounded in $n$, it follows that
$$
    c\sigma_n\leq\mathbb E\|\mathbb X_n\|
    \leq
    2\sigma_n+o(1)
    \qquad\text{as }n\to\infty.
$$

Thus, the dimension-dependent loss is confined to a vanishing additive remainder rather than multiplying the leading variance scale. Moreover, the spectral inclusion \eqref{eq:sp-free} controls more than the extreme spectral scale: it also excludes eigenvalues lying farther than the comparison error from the free spectrum, including possible outliers inside spectral gaps. Theorem~\ref{thm:dependent-free-comparison} therefore provides a dependent analogue of the preceding structure-sensitive estimates, with the norm of the covariance-matched free model as its principal term and the dependent, non-Gaussian structure contributing only an additive error.

\section{Extension to functions of iid random fields in two directions} \label{Sectionextension2D}

For reader convenience and for simplification we have stated and proved the results in case where the entries of $\mathbb X_n$ are  functions  of an iid random field in one direction, so when they satisfy the equation \eqref{eq:Xkl-def-1D}. Assume now that  the entries of $\mathbb X_n$ are  functions  of an iid random field in two directions, so they are given by the equation \eqref{eq:Xkl-def-2D}. Analyzing the proofs of the results stated in Section \ref{Sectionintermediate}, we have first to suitably approximate the field $(X_{k, \ell})_{k, \ell \in \mathbb Z}$ by a $m$-dependent random field $(X^{(m)}_{k, \ell})_{k, \ell \in \mathbb Z}$.  Hence we set 
\[
X^{(m)}_{k, \ell} =  \E \big ( X_{k, \ell} | \sigma(\varepsilon_{k-i, \ell-j} , 0 \leq i \leq m, 0 \leq j \leq m)  \big )\, .
\] 
We define now the ${\mathbb L}^{\infty}$-coupling coefficients associated with the random field $(X_{k, \ell})_{k, \ell \in \mathbb Z}$. Let $(\varepsilon'_{k, \ell})_{k, \ell \in \mathbb Z}$ be an independent copy of $(\varepsilon_{k, \ell})_{k, \ell \in \mathbb Z}$. Then define 
\[
X^{(*,1)}_{k,\ell}
:=
f\big( (\varepsilon_{k-i, \ell-j})_{0 \leq i \leq k-1 \atop  0 \leq j } , (\varepsilon'_{k-i, \ell-j})_{ i \geq k \atop  j \geq 0 }  \big) \ , \  X^{(*,2)}_{k,\ell} 
:=
f\big( (\varepsilon_{k-i, \ell-j})_{0 \leq i \atop  0 \leq j  \leq \ell-1 } , (\varepsilon'_{k-i, \ell-j})_{ i \geq 0 \atop  j \geq \ell }  \big)
\]
and
\beq  \label{defdeltak2D}
\delta_1 (k) = \Vert  X_{k, 0}  -X^{(*,1)}_{k,0}  \Vert_{\infty} \ , \  \delta_2 (k) = \Vert  X_{0, k}  -X^{(*,2)}_{0, k }  \Vert_{\infty} \text{ and } \delta(k) = \max ( \delta_1 (k) , \delta_2 (k) )  .
\eeq
With the definition of these coefficients and taking into account stationarity, we get in particular that 
\[
\Vert X_{k, \ell}  - X^{(m)}_{k, \ell}  \Vert_{\infty} = \Vert X_{0, 0}  - X^{(m)}_{0, 0}  \Vert_{\infty}  \leq \delta(m)
\]
and 
\[
\big | {\rm Cov} ( X_{k, \ell} ,   X_{k', \ell'}  ) \big |  \leq  \Vert X_{0,0} \Vert_1 \min \big ( \delta_1 (|k-k'|) ,  \delta_2 (|\ell-\ell'|)  \big )  \leq   \Vert X_{0,0} \Vert_1  \delta ( |k-k'| \vee |\ell-\ell'| ) \, .
\]
Analyzing the proof of Theorem \ref{thm:Hausdorff distance} and in particular the intermediate results stated in Section \ref{Sectionintermediate}, we infer that the following results holds: 
\begin{theorem}\label{thm:Hausdorff distance2D}
Let $(X_{k, \ell})_{k, \ell \in \mathbb Z}$ be defined by \eqref{eq:Xkl-def-2D} and $\mathbb X_n$ by \eqref{eq:X-matrix-def}.  Let $\delta(k)$ be defined by \eqref{defdeltak2D}. Assume 
that $(\delta(k))_{k \geq 0}$ decreases geometrically fast. Then, there exists a universal positive constant $C$ such that for any $t \geq 0$ and $\kappa >0$,
\[
\Pbb \big ( d_H ( {\rm sp} ( \mathbb X_n), {\rm sp} ( \mathbb G_n)  ) > C \varepsilon(t) \big ) \leq n {\rm e}^{-t} \, , 
\]
where 
\[
  \varepsilon(t) =   \frac{\sigma_*}{  \sqrt n} \sqrt{t }  +  n^{-\kappa/2}  \sqrt{t }  
 +    t^{2/3} \frac{\log n}{n^{1/6}}  + t     \frac{ (\log n)^{1/3} }{n^{1/6}}   +   t^{5/3}    \frac{\log n }{\sqrt{n}}   , 
\]
with
\[
\sigma_*^2=  \sum_{k,\ell  \in {\mathbb Z}} \big|  \Cov (  X_{0,0} ,    X_{k,\ell}  )\big|.
\]
\end{theorem}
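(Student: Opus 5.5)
The plan is to retrace the one-directional proof of Theorem~\ref{thm:Hausdorff distance}, i.e.\ Theorems~\ref{spectrumonedirection} and~\ref{spectrumotherdirection}, with a two-dimensional blocking scheme. The input bounds are then combined exactly as before. First, for $X^{(m)}_{k,\ell}=\E(X_{k,\ell}\mid \sigma(\varepsilon_{k-i,\ell-j},0\le i,j\le m))$ we have $\|X_{k,\ell}-X^{(m)}_{k,\ell}\|_\infty\le\delta(m)$. Hence $\|\mathbb X_n-\mathbb X_n^{(m)}\|\le \sqrt n\,\delta(m)$ (Frobenius bound), and a rough analogue of Lemma~\ref{ineconcentration1} holds. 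The field $(X^{(m)}_{k,\ell})$ is now $m$-dependent in both directions. Its covariances satisfy $|\Cov(X_{k,\ell},X_{k',\ell'})|\le \|X_{0,0}\|_1\delta(|k-k'|\vee|\ell-\ell'|)$, which gives the two-dimensional $\sigma_*^2$ and the analogue of Lemma~\ref{lem:Gaussian-parameter-bounds}.

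Next I would prove the analogue of Proposition~\ref{propcomparaisonmomentsp}. Partition $\mathbb Z^2$ into squares of side $m$, group them into $3\times3$ super-blocks, and freeze the boundary squares, i.e.\ all squares whose index in either coordinate is $\equiv 0 \pmod 3$, forming a grid of frozen strips. Conditionally on the frozen innovations, the contributions of distinct super-blocks' interiors are independent, by $m$-dependence in both directions. Inside each super-block, decompose the interior into its conditional expectation given the frozen variables plus a centered remainder. Then replace these by Gaussian vectors with matching conditional covariances. The interpolation argument of Brailovskaya--van Handel is run conditionally, as in Section~\ref{Subsection:Step2}. After that, the frozen-strip variables are Gaussianized as in Section~\ref{Subsection:Step3}, and one compares $\mathbb G^{(m)}_n$ with $\mathbb G_n$ as in Section~\ref{Section:Step4}. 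The differences from the 1D case lie only in the parameters. A single independent summand now carries about $m^2$ entries spread over $m$ rows and $m$ columns, rather than $m$ entries in one column. Consequently the norm and variance parameters of each summand, which previously scaled like $\sqrt{m/n}$ and $m/n$, scale with an extra power of $m$. In $U_{n,m}$ and in Lemma~\ref{lma73} one therefore gets $m^{2}$ in place of $m$ in these terms.

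Finally, choose $m=\lceil C_0\log n\rceil$ with $C_0$ large depending on $c$ and $\kappa$, so that $n^2\delta(m)\le n^{-\kappa}$. This gives the $n^{-\kappa/2}\sqrt t$ term after the square/cube-root extraction. Then run the standard resolvent argument over a net of $z=E+\ii\eta$ with $\eta$ optimized, as in the proof of Theorem~\ref{spectrumonedirection}, and do the same for the reverse inclusion via the analogue of Proposition~\ref{prop:lower-bound-X}. Replacing $m$ by $m^2$ converts the $(\log n)^{1/2}$, $(\log n)^{1/6}$ and $(\log n)^{1/2}$ factors of \eqref{defepsilont} into $\log n$, $(\log n)^{1/3}$ and $\log n$ respectively, which matches the stated $\varepsilon(t)$. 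This is consistent with the dominant terms behaving like $(m^2/n)^{1/6}$, $(m^2/n)^{1/2}$, and so on. The Hausdorff bound follows from the two inclusions by a union bound.

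The main obstacle is the conditional decoupling in the second step. With frozen strips in both directions, I must check two things. First, the conditional-expectation parts and remainders of different super-blocks are genuinely uncorrelated given the frozen grid, including the corners where horizontal and vertical strips meet and the interaction with the triangular (symmetric) structure near the diagonal. Second, the Step~3 Gaussianization of the frozen grid still produces a field whose covariance equals that of $X^{(m)}$. I would handle the diagonal by treating super-blocks intersecting the diagonal as separate summands; there are only $O(n/m)$ of them, and each has norm $O(m\sqrt{m}/\sqrt n)$-type size, which is absorbed into the error terms.
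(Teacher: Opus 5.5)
\textbf{There is a genuine gap.} Your overall architecture is the paper's: finite-memory truncation, $3\times3$ super-blocks of side $m$ with a frozen grid of strips, conditional Gaussian replacement and interpolation, removal of the frozen randomness, comparison of $\mathbb G^{(m)}_n$ with $\mathbb G_n$, the substitution $m\to m^2$, and $m\asymp\log n$. Your exponent conversions do reproduce the stated $\varepsilon(t)$. The gap is in how you decompose a super-block before Gaussianizing it.

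You split the contribution into its conditional expectation given the frozen variables plus a centered remainder, and replace the remainder by a Gaussian with the matching conditional covariance. That is enough for Step~2. But the conditional covariance of a super-block then depends jointly on all four strips bordering it: ${\bf a}_{3k-3,\cdot}$, ${\bf a}_{3k,\cdot}$, ${\bf a}_{\cdot,3\ell-3}$ and ${\bf a}_{\cdot,3\ell}$. Its square root mixes them. Each strip is shared with the neighbouring super-blocks, so unconditionally your $\mathbb Z^{(m)}$ is not a sum of independent small random matrices. The Brailovskaya--van Handel interpolation in Step~3, which needs independent summands to define $R_q$ and the Gaussian model $\mathbb G^{(m)}$, therefore cannot be run.

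The paper already avoids this in 1D. It splits the middle part into $U=\E(X^{(m)}_{i,j}\mid\overrightarrow{{\bf\eta}_{3k-2}}(j))$ plus a remainder $V$. This is a conditional expectation with respect to an interior block, not the frozen one. As a result $\Cov(\overrightarrow{B_k}\mid{\bf a})=\Gamma^{(1)}_k({\bf a}_{3k-3})+\Gamma^{(2)}_k({\bf a}_{3k})$ is a sum of positive semidefinite pieces, each tied to a single boundary block. That is exactly what allows regrouping by $\overrightarrow{{\bf\eta}_{3k}}(j)$ into independent summands.

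In 2D the paper iterates this split, first in the column direction and then in the row direction. This gives four mutually uncorrelated components $\mathbb B^{(u,v)}_{k,\ell}$, each depending on one horizontal and one vertical strip. Because these still depend on two intersecting strips, the paper removes the frozen randomness in two stages. It first handles the horizontal strips conditionally on the vertical ones, through the intermediate proxy $\mathbb W^{(m)}$, and then the vertical strips. Your proposal instead does a single Step~3 Gaussianization of the whole frozen grid.

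The two obstacles you flag are not the real ones. Conditional independence of distinct super-blocks given the frozen grid is immediate from $m$-dependence. The covariance identity for the Gaussianized field follows from the law of total covariance once conditional means and covariances are matched; that is essentially what the case analysis in Appendix~\ref{appendix:covariance-identity} does. What must actually be established is twofold. First, the conditional covariance must admit an additive, positive semidefinite decomposition into pieces depending on as few frozen strips as possible. Second, those pieces must regroup into independent summands at each Gaussianization stage.
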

The proof of this the theorem follows the same strategy as that of Theorem \ref{thm:Hausdorff distance}, essentially replacing $m$ with $m^2$ in the statements and proofs of the results in Section \ref{Sectionintermediate} (not replacing however $\delta(m)$ by $\delta(m^2)$). We do not give here the details because the proofs are too technical. However to convince the reader we give in Section \ref{appendix:twodimensional-extension} the construction of the Gaussian-type Wigner matrix whose spectrum is close to that of $\mathbb X_n$.  

Let us give an example of processes to which our results apply.

\noindent {\textbf{Example 3.} Another class of nonlinear random field is the Volterra process which plays an important role in the nonlinear system theory.  For any $(k,\ell) \in {\mathbb Z}^2$, define
\begin{equation} \label{Volterra}
X_{k , \ell}= \sum_{(i,j,r,s) \in {\mathbb Z}^4} a_{i,r} a_{j,s} \xi_{k-i, \ell -j} \xi_{k-r, \ell -s }\, ,
\end{equation}
where $a_{i,j}$ are real coefficients such that $a_{i,r}=0$ if $i=r$, and $(\xi_{i,j})$ is a sequence of iid centered real-valued random variables in ${\mathbb L}^{\infty}$. This model is a particular case of a more general Volterra process defined for ${\bf k} \in {\mathbb Z}^2$ by: 
$$
X_{{\bf k}}= \sum_{{\bf s_1}, {\bf s_2} \in {\mathbb Z}^2}  b_{{\bf s_1}, {\bf s_2} } \xi_{{\bf k- s_1}} \xi_{{\bf k- s_2}} \, ,
$$
when $b_{{\bf s_1}, {\bf s_2} } =a_{i,r} a_{j,s}$ if ${\bf s_1}=(i,j)$ and  ${\bf s_2}=(r,s)$.  If $|a_{i,j}| \leq  \kappa \rho^{i+j}$ for some $\rho \in (0,1)$ and $\kappa>0$, we infer that the coupling coefficients $(\delta(k))_{k>0}$ decrease exponentially fast. Hence Theorem \ref{thm:Hausdorff distance2D} applies when the entries of the underlying symmetric matrix are given by \eqref{Volterra}.
}

\section{Proofs of the results of Section \ref{Sectionintermediate}}

\subsection{Proof of Proposition \ref{propcomparaisonmomentsp} }\label{Section:main proof}

We shall decompose the innovation field into blocks of length $m$ and group these blocks
into \emph{triple-blocks} of length $3m$. The key idea is to ``freeze'' the
boundary innovation blocks by replacing them with a deterministic
sequence ${\bf a}_{\bf 3}$, which yields conditional independence between different triple-blocks. The middle part of each triple-block remains unchanged. This idea goes back to Berkes-Liu-Wu \cite{BerkesLiuWu} to get an extension to KMT results for functions of iid random variables.

\subsubsection{Step 1: Construction a Gaussian proxy $\mathbb Z^{(m)}$ of $\mathbb X^{(m)}$}\label{Subsection:Step1}
\paragraph{Triple-block decomposition of $\mathbb X^{(m)}$:}For each column index $j\in\mathbb Z$ and block index $k\in\mathbb Z$, define the
length-$m$ innovation block
\[
\overrightarrow{{\bf \eta}_{k} }(j) = \big( \varepsilon_{(k-1) m +1,j }, \dots,  {\varepsilon_{k m,j }} \big),
\]
and, for a deterministic sequence $(a_{i,j})_{i,j\in\mathbb Z}$, the corresponding
deterministic block
\[
{\bf a}_{k} (j) = \big(a_{(k-1) m +1,j }, \dots, {a_{k m ,j }} \big).
\]
We will condition on or "freeze" \emph{every third} innovation block, and therefore set
\[
\overrightarrow{{\bf \eta}_{\bf 3} }
=
\Big\{ \overrightarrow{{\bf \eta}_{3k} }(j) : (k,j)\in{\mathbb Z}^2 \Big\},
\qquad
{\bf a}_{\bf 3}
=
\Big\{{\bf a}_{3k} (j) : (k,j)\in{\mathbb Z}^2 \Big\}.
\]
Intuitively, these are the boundary blocks that will separate the triple-blocks. Let $k_{n,m} = [n/(3m)] +1$. Setting $X_{i,j} = 0$ if $i >n$, we can write
\[
{\mathbb X}^{(m)}
=
\frac{1}{\sqrt n}
\sum_{k=1}^{k_{n,m}}
\sum_{i= (3k-3)m+1}^{3km}
\sum_{j =1}^i X_{i,j}^{(m)} E_{i,j}.
\]
Thus the $i$-indices are partitioned into consecutive triple-blocks
\[
[1,3m],\ [3m+1,6m],\ \ldots,\ [3(k-1)m+1,3km],\ \ldots
\]
each of which is itself split into three sub-blocks of length $m$:
\[
\underbrace{[(3k-3)m+1,(3k-2)m]}_{\text{left/outer}}
\quad\cup\quad
\underbrace{[(3k-2)m+1,(3k-1)m]}_{\text{middle}}
\quad\cup\quad
\underbrace{[(3k-1)m+1,3km]}_{\text{right/outer}}.
\]
Recall that
\[
X^{(m)}_{i,j}
=
\E \Big( X_{i, j}\,\Big|\, \sigma ( \varepsilon_{i-k,j},  0 \leq k \leq m ) \Big)
:= g_m  (  \varepsilon_{i-m,j} , \dots, \varepsilon_{i,j}),
\]
for some measurable real-valued function $g_m$. In particular, for fixed $(i,j)$ the variable $X^{(m)}_{i,j}$ depends only on the
$m\!+\!1$ innovations $(\varepsilon_{i-m,j},\ldots,\varepsilon_{i,j})$.

\medskip
We now fix $k\ge 1$ and explain precisely how we  freeze the boundary innovations inside each triple-block.

\noindent\emph{(i) Left sub-block: $(3k-3)m+1 \leq i \leq (3k-2)m$.}
In this range, the window $(\varepsilon_{i-m,j},\ldots,\varepsilon_{i,j})$ overlaps the boundary block
$\overrightarrow{{\bf \eta}_{3k-3}}(j)$ and the next block $\overrightarrow{{\bf \eta}_{3k-2}}(j)$.
We freeze the boundary part by replacing it with ${\bf a}_{3k-3}(j)$ and define
\[
Y^{(3k-2)}_{i,j}   ( {\bf a}_{3k -3} (j)  )
=
g_m  \big(  a_{i-m,j} , \dots, a_{(3k-3) m , j} , \varepsilon_{(3k-3) m +1 , j} , \dots,  \varepsilon_{i,j}\big).
\]
We also define its mean
\beq \label{eq:centering term 1}
 M^{(3k-2)}_{i,j}   ( {\bf a}_{3k -3} (j)  )
=
\E \big( Y^{(3k-2)}_{i,j}   ( {\bf a}_{3k -3} (j)  ) \big).
\eeq

\smallskip
\noindent\emph{(ii) Middle sub-block: $(3k-2)m+1 \leq i \leq (3k-1)m$.}
Here the window of length $m\!+\!1$ is entirely contained in the two \emph{interior} innovation blocks
$\overrightarrow{{\bf \eta}_{3k-2}}(j)$ and $\overrightarrow{{\bf \eta}_{3k-1}}(j)$, so no freezing is needed:
\[
Y^{(3k-1)}_{i,j}
=
X_{i,j}^{(m)}
:=
H_i\big(\overrightarrow{{\bf \eta}_{3k-2}}(j), \overrightarrow{{\bf \eta}_{3k-1}}(j)\big),
\]
where $H_i$ is a measurable function.

\smallskip
\noindent\emph{(iii) Right sub-block: $(3k-1)m+1 \leq i \leq 3km$.}
In this range, the window overlaps the interior block $\overrightarrow{{\bf \eta}_{3k-1}}(j)$ and the
\emph{next boundary block} $\overrightarrow{{\bf \eta}_{3k}}(j)$. We freeze the boundary block by replacing it
with ${\bf a}_{3k}(j)$ and define
\[
Y^{(3k)}_{i,j}   ( {\bf a}_{3k} (j)  )
=
g_m  \big(  \varepsilon_{i-m,j} , \dots, \varepsilon_{(3k-1) m , j} , a_{(3k-1) m +1 , j}, \dots,   a_{i,j}\big),
\]
with mean
\beq \label{eq:centering term 2}
 M^{(3k)}_{i,j}   ( {\bf a}_{3k} (j)  )
=
\E \big( Y^{(3k)}_{i,j}   ( {\bf a}_{3k} (j)  ) \big).
\eeq
Now to keep the modified variables centered, we define
\[
{\tilde Y}^{(3k-2)}_{i,j}   ( {\bf a}_{3k -3} (j)  )
=
Y^{(3k-2)}_{i,j}   ( {\bf a}_{3k -3} (j)  )
-
M^{(3k-2)}_{i,j}   ( {\bf a}_{3k -3} (j)  ),
\]
and
\[
 {\tilde Y}^{(3k)}_{i,j}   ( {\bf a}_{3k } (j)  )
=
Y^{(3k)}_{i,j}   ( {\bf a}_{3k} (j)  )
-
M^{(3k)}_{i,j}   ( {\bf a}_{3k} (j)  ).
\]
Define then the modified partially frozen and centered variables
\[
{\tilde X}^{(m)}_{i,j}   ( {\bf a}_{3k -3} (j) ,  {\bf a}_{3k } (j)  )
=
\begin{cases}
{\tilde Y}^{(3k-2)}_{i,j}   ( {\bf a}_{3k -3} (j)  ) ,
& \text{ for }  (3k-3)m+1 \leq i \leq (3k-2)m, \\[0.25em]
X^{(m)}_{i,j},
& \text{ for }  (3k-2)m+1 \leq i \leq (3k-1)m,\\[0.25em]
 {\tilde Y}^{(3k)}_{i,j}   ( {\bf a}_{3k } (j)  ) ,
& \text{ for }  (3k-1)m+1 \leq i \leq 3km .
\end{cases}
\]
 
\medskip
This construction leads to independence since within the $k$-th triple-block, the only dependence across different triple-blocks comes from the boundary
innovation blocks $\overrightarrow{{\bf \eta}_{3k-3}}(j)$ and $\overrightarrow{{\bf \eta}_{3k}}(j)$ that are shared
with neighboring triple-blocks. Hence by freezing these boundary blocks to deterministic values
${\bf a}_{3k-3}(j)$ and ${\bf a}_{3k}(j)$, i.e by  conditioning on
$\overrightarrow{{\bf \eta}_{\bf 3}}={\bf a}_{\bf 3}$, the remaining randomness in the $k$-th triple-block is carried
only by the \emph{interior} blocks $\overrightarrow{{\bf \eta}_{3k-2}}(j)$ and $\overrightarrow{{\bf \eta}_{3k-1}}(j)$,
which are independent across different $k$'s. 

We now collect these centered outer parts into vectors. Hence the vectors
\[
\overrightarrow{Y}^{(3k-2)}  ( {\bf a}_{3k -3} (j)  )
=
\big \{ {\tilde Y}^{(3k-2)}_{i,j}   ( {\bf a}_{3k -3} (j)  ) \big \}_{i \in [\![ 3(k-1)m+1, n \wedge (3k-2)m  ] \!] }
\]
and
\[
\overrightarrow{Y}^{(3k)}  ( {\bf a}_{3k} (j)  )
:=
\big \{ {\tilde Y}^{(3k)}_{i,j}   ( {\bf a}_{3k} (j)  ) \big \}_{i \in [\![ (3k-1)m+1, n \wedge 3km  ] \!] }
\]
are independent. Moreover, independence across different $j$ follows from independence of
distinct columns of the innovation field. We now decompose the middle sub-block as follows:
\[
X_{i,j}^{(m)} = \E \big  (  X_{i,j}^{(m)} |  \overrightarrow{{\bf \eta}_{3k-2} }(j)   \big) +   \big \{X_{i,j}^{(m)}  -\E   \big (  X_{i,j}^{(m)} |  \overrightarrow{{\bf \eta}_{3k-2} }(j)  \big )   \big \}  , 
\]
 and define the following vectors supported on the middle sub-block:
\[
\overrightarrow{U}^{(k)}  ( j )  :=  \big \{ \E \big  (  X_{i,j}^{(m)} |  \overrightarrow{{\bf \eta}_{3k-2} }(j)   \big)  \big \}_{i \in [\![ (3k-2)m+1,  n \wedge(3k-1)m  ] \!] }  
\]
and
\[
\overrightarrow{V}^{(k)}  ( j )  :=  \big \{  X_{i,j}^{(m)}  - \E \big  (  X_{i,j}^{(m)} |  \overrightarrow{{\bf \eta}_{3k-2} }(j)   \big)  \big \}_{i \in [\![ (3k-2)m+1,  n \wedge(3k-1)m  ] \!] }   .
\]
By construction,
\[
\E\big(\overrightarrow{V}^{(k)}(j)\,\big|\,\overrightarrow{{\bf \eta}_{3k-2}}(j)\big)=\overrightarrow{0},
\qquad
\text{and hence}\qquad
\E\big[\overrightarrow{U}^{(k)}(j)\,\big(\overrightarrow{V}^{(k)}(j)\big)^t\big]=0,
\]
so the vectors $\overrightarrow{U}^{(k)}(j)$ and $\overrightarrow{V}^{(k)}(j)$ are uncorrelated.
This is the key reason for splitting the middle block in this way. Next, we define random column vectors in $\mathbb R^n$ by placing the previously defined pieces into their natural coordinate positions, and filling the remaining coordinates with zeros. Here
$\overrightarrow{0}_{r,s}$ denotes the zero vector filling coordinates $r,\dots,s$ and is considered empty
if $r>s$. Define:
\[
\overrightarrow{B}^{(1)}_k   ( {\bf a}_{3k -3} (j)  )
=
\Big ( \overrightarrow{0}_{1,(3k-3)m},   \overrightarrow{Y}^{(3k-2)}  ( {\bf a}_{3k -3} (j)  ) ,  \overrightarrow{U}^{(k)}  ( j ) , \overrightarrow{0}_{(3k-1)m+1, n }  \Big )^t , 
\]
\[
\overrightarrow{B}^{(2)}_k   ( {\bf a}_{3k} (j)  )
=
\Big ( \overrightarrow{0}_{1, (3k-2)m},  \overrightarrow{V}^{(k)}  ( j ) ,   \overrightarrow{Y}^{(3k)}  ( {\bf a}_{3k} (j)  ) ,  \overrightarrow{0}_{3km+1, n }  \Big )^t ,
\]
and
\[
\overrightarrow{B_k}   ( {\bf a}_{3k -3} (j) ,   {\bf a}_{3k} (j)  )
=
\overrightarrow{B}^{(1)}_k   ( {\bf a}_{3k -3} (j)  )
+
\overrightarrow{B}^{(2)}_k   ( {\bf a}_{3k } (j)  ) . 
\]

\noindent
Note that the vector $\overrightarrow{B}^{(1)}_k$ contains the \emph{left outer} centered block and the conditioned part
$\overrightarrow{U}^{(k)}$ of the middle block. The vector $\overrightarrow{B}^{(2)}_k$ contains $\overrightarrow{V}^{(k)}$ of the middle block and the \emph{right outer} centered block. In particular,
$\overrightarrow{B_k}$ reconstructs the full triple-block contribution up to the mean terms. It is important, however, to note that $\overrightarrow{B}^{(1)}_k   ( {\bf a}_{3k -3} (j)  )$ and
$\overrightarrow{B}^{(2)}_k   ( {\bf a}_{3k } (j)  )$ are not correlated and that the random vectors
$ \big ( \overrightarrow{B_k}   ( {\bf a}_{3k -3} (j) ,   {\bf a}_{3k} (j)  ) \big )_{k \geq 1, j \geq 1}
$
are centered and independent.

\paragraph{Covariances and Gaussian counterparts.}
We introduce a Gaussian block model that matches the covariance structure of
$\overrightarrow{B_k}$. The main advantage is that, in the Gaussian setting,
\emph{uncorrelatedness implies independence}. To do so, we consider the covariance matrices
\[
\begin{aligned}
\Gamma^{(1)}_k  ( {\bf a}_{3k -3} (j)  )
&=
\E \Big ( \overrightarrow{B}^{(1)}_k   ( {\bf a}_{3k -3} (j)  )
\big (  \overrightarrow{B}^{(1)}_k   ( {\bf a}_{3k -3} (j)  ) \big )^t  \Big ), \\  \Gamma^{(2)}_k  ( {\bf a}_{3k } (j)  )
&=
\E \Big ( \overrightarrow{B}^{(2)}_k   ( {\bf a}_{3k } (j)  )
\big (  \overrightarrow{B}^{(2)}_k   ( {\bf a}_{3k } (j)  )  \big )^t  \Big )  \, . 
\end{aligned}
\]
Note that the dependence on ${\bf a}_{3k-3}(j)$ and ${\bf a}_{3k}(j)$ reflects the boundary freezing, the remaining
randomness in $\overrightarrow{B}^{(1)}_k$ and $\overrightarrow{B}^{(2)}_k$ is carried by interior innovation blocks.

Let $(Z^{(1)}_{i,j})_{1 \leq i,j \leq n}$ and $(Z^{(2)}_{i,j})_{1 \leq i,j \leq n}$ be two sequences of iid
standard gaussian random variables that are independent between them and independent of
$(\overrightarrow{\eta}, {\bf a})$. We define the following independent Gaussian random vectors
\[
\overrightarrow{G}^{(1)}_k   ( {\bf a}_{3k -3} (j)  )
=
\Big (\Gamma^{(1)}_k  ( {\bf a}_{3k -3} (j)  )  \Big )^{1/2} ( Z^{(1)}_{1,j}, \dots, Z^{(1)}_{n,j} )^t ,
\]
and
\[
\overrightarrow{G}^{(2)}_k   ( {\bf a}_{3k } (j)  )
=
\Big (\Gamma^{(2)}_k  ( {\bf a}_{3k } (j)  )  \Big )^{1/2} ( Z^{(2)}_{1,j}, \dots, Z^{(2)}_{n,j} )^t  .
\]
Set
\[
\overrightarrow{G_k}   ( {\bf a}_{3k -3} (j) ,   {\bf a}_{3k} (j)  )
=
\overrightarrow{G}^{(1)}_k   ( {\bf a}_{3k -3} (j)  )
+
\overrightarrow{G}^{(2)}_k   ( {\bf a}_{3k } (j)  ) . 
\]
The vectors $\overrightarrow{G}^{(1)}_k$ and $\overrightarrow{G}^{(2)}_k$ are Gaussian by construction, centered,
and independent across $k$ and $j$. Moreover, because
$\overrightarrow{B}^{(1)}_k$ and $\overrightarrow{B}^{(2)}_k$ are uncorrelated, we have
\[
\mathrm{Cov}\big(\overrightarrow{B_k}\big)
=
\Gamma^{(1)}_k + \Gamma^{(2)}_k
=
\mathrm{Cov}\big(\overrightarrow{G_k}\big),
\]
so $\overrightarrow{G_k}$ and $\overrightarrow{B_k}$ share the same covariance matrix for fixed boundary values ${\bf a}_{3k-3}(j),{\bf a}_{3k}(j)$. Moreover, $\overrightarrow{G_k}$ has  the stronger structural property that the relevant block components are independent.

\paragraph{Construction of Gaussian matrix $\mathbb Z^{(m)}$.}
Next we set, for any integer $i \in [3(k-1)m+1 , 3km]$, 
\[
 A_i  \big(    {\bf a}_{3(k-1)} (j) ,  {\bf a}_{3k} (j) \big)  : =  \Big ( \overrightarrow{B_k}  \big(   {\bf a}_{3(k-1)} (j) ,   {\bf a}_{3k} (j) \big)  \Big )_{i}   \,  
\]
and collect the deterministic mean corrections \eqref{eq:centering term 1} and \eqref{eq:centering term 2} introduced in the freezing procedure by setting 
\[
M_{i,j}\big({\bf a}_{3(k-1)}(j),{\bf a}_{3k}(j)\big)
=
M^{(3k-2)}_{i,j}\big({\bf a}_{3(k-1)}(j)\big)
+
M^{(3k)}_{i,j}\big({\bf a}_{3k}(j)\big).
\]
With the notations above, the following identity is valid:
\[
{\mathbb X}^{(m)} = {\mathbb X}^{(m)}  ( \overrightarrow{{\bf \eta}_{\bf 3} } )
=
\frac{1}{\sqrt n} \sum_{k=1}^{k_{n,m}}  \sum_{i= (3k-3)m+1}^{3km}  \sum_{j =1}^i
\Big (
A_i  \big(    \overrightarrow{{\bf \eta}_{3(k-1)} }(j) ,   \overrightarrow{{\bf \eta}_{3k} }(j) \big)
+
M_{i,j}  \big(    \overrightarrow{{\bf \eta}_{3(k-1)} }(j) ,   \overrightarrow{{\bf \eta}_{3k} }(j) \big)
\Big )
E_{i,j}.
\]
In the same manner, for any integer $i \in [3(k-1)m+1 , 3km]$, we set
\[
 Z_i \big(    {\bf a}_{3(k-1)} (j) ,  {\bf a}_{3k} (j) \big)
: =
\Big ( \overrightarrow{G_k}     \big(    {\bf a}_{3(k-1)} (j) ,  {\bf a}_{3k} (j) \big)    \Big )_{i}    
\]
and define 
\[
{\mathbb Z}^{(m)} =  {\mathbb Z}^{(m)}  (\overrightarrow{{\bf \eta}_{\bf 3} } ) 
=
\frac{1}{\sqrt n} \sum_{k=1}^{k_{n,m}}  \sum_{i= (3k-3)m+1}^{3km}  \sum_{j =1}^i
\Big (
Z_i  \big(    \overrightarrow{{\bf \eta}_{3(k-1)} }(j) ,   \overrightarrow{{\bf \eta}_{3k} }(j) \big)
+
M_{i,j}  \big(    \overrightarrow{{\bf \eta}_{3(k-1)} }(j) ,   \overrightarrow{{\bf \eta}_{3k} }(j) \big)
\Big )
E_{i,j}  \, .
\]
Note that the matrix $\mathbb Z^{(m)}$ is obtained from $\mathbb X^{(m)}$ by replacing, within each triple-block, the
centered random vector $\overrightarrow{B_k}$ by its Gaussian counterpart $\overrightarrow{G_k}$ with the same
covariance for fixed boundary values, while keeping the same mean correction $M_{i,j}$. Hence, $\mathbb Z^{(m)}$ is a Gaussian proxy for $\mathbb X^{(m)}$ that preserves conditional covariance structure and is built from independent blocks.

\subsubsection{Step 2: Gaussian interpolation and reduction to an independent-block model}\label{Subsection:Step2}

We want now to give an upper bound for the quantity 
\[
Q_1= \Bigl|\bigl(\E\bigl[\tr\,|z\Id-{\mathbb X}^{(m)}|^{-2p}\bigr]\bigr)^{\frac{1}{2p}}-\bigl(\E\bigl[\tr\,|z\Id- {\mathbb Z}^{(m)} |^{-2p}\bigr]\bigr)^{\frac{1}{2p}}\Bigr| .
\]
The strategy is to compare ${\mathbb X}^{(m)}$ and  ${\mathbb Z}^{(m)}$ by interpolation  following the approach used in the proof of Theorem~6.8 in \cite{brailovskayaVanHandel}.
With this aim, we introduce the interpolating matrix ${\mathbb D}^{(m)}(t)$ that continuously deforms the
non-Gaussian block variables into Gaussian blocks while keeping the deterministic mean corrections unchanged.
Controlling the derivative of the resolvent moment along this path will yield the desired estimate for $Q_1$. We start by defining for any $t \in [0,1]$,
\[
D_i  \big(   {\bf a}_{3(k-1)} (j) ,  {\bf a}_{3k} (j) \big) (t) = \sqrt{t}  A_i \big(    {\bf a}_{3(k-1)} (j) ,  {\bf a}_{3k} (j) \big) + \sqrt{1-t}  Z_i \big(    {\bf a}_{3(k-1)} (j) ,  {\bf a}_{3k} (j) \big) ,
\]
and set
\[
{\mathbb D}^{(m)}  (t) =  {\mathbb D}^{(m)}   (\overrightarrow{{\bf \eta}_{\bf 3} } ) (t) =    \frac{1}{\sqrt n} \sum_{k=1}^{k_{n,m}}  \sum_{i= (3k-3)m+1}^{3km}  \sum_{j =1}^i  \Big (  D_i   \big(    \overrightarrow{{\bf \eta}_{3(k-1)} }(j) ,   \overrightarrow{{\bf \eta}_{3k} }(j) \big)  (t)+  M_{i,j}  \big(    \overrightarrow{{\bf \eta}_{3(k-1)} }(j) ,   \overrightarrow{{\bf \eta}_{3k} }(j) \big)  \Big )   E_{i,j}. 
\]
By construction, the endpoints of the interpolation satisfy
$
{\mathbb D}^{(m)}(1)={\mathbb X}^{(m)}$ and $
{\mathbb D}^{(m)}(0)={\mathbb Z}^{(m)}$. Also note that 
\[
\E\big[{\mathbb D}^{(m)}  (1)\mid \overrightarrow{{\bf \eta}_{\bf 3} } ={\bf a}_{\bf 3} \big] =    \frac{1}{\sqrt n} \sum_{k=1}^{k_{n,m}}  \sum_{i= (3k-3)m+1}^{3km}  \sum_{j =1}^i   M_{i,j}  \big(    {\bf a}_{3(k-1)} (j) ,   {\bf a}_{3k} (j) \big)     E_{i,j} 
\]
As in \cite{brailovskayaVanHandel}, we shall analyze the quantity
\[
 \Big |  \frac{d}{dt} \E\bigl[\tr\,|z\Id-{\mathbb D}^{(m)}  (t) |^{-2p}\bigr]\ \Big | \, .
\]
The key point is that the block construction from Step~1 yields conditional independence once we freeze the
boundary blocks $\overrightarrow{{\bf \eta}_{\bf 3}}$ by conditioning on
$\overrightarrow{{\bf \eta}_{\bf 3}} ={\bf a}_{\bf 3}$ and to exploit the resulting independent structure. With this aim, we start by noticing that 
\[
 \Big |  \frac{d}{dt} \E\bigl[\tr\,|z\Id-{\mathbb D}^{(m)}  (t) |^{-2p}\bigr]\ \Big |
 \leq
 \E  \Big |  \frac{d}{dt} \E_{\overrightarrow{{\bf \eta}_{\bf 3} } ={\bf a}_{\bf 3} }\bigl[\tr\,|z\Id-{\mathbb D}^{(m)}  (t) |^{-2p}\bigr]\ \Big | ,
\]
where the notation $\E_{\overrightarrow{{\bf \eta}_{\bf 3} } ={\bf a}_{\bf 3} } ( \cdot)$ means
$\E \big ( \cdot \mid \overrightarrow{{\bf \eta}_{\bf 3} } ={\bf a}_{\bf 3}  \big )$. Under the independence structure, we have
\[
 \E_{\overrightarrow{{\bf \eta}_{\bf 3} } ={\bf a}_{\bf 3} }\bigl[\tr\,|z\Id-{\mathbb D}^{(m)}  (t) |^{-2p}\bigr]
 =
 \E\bigl[\tr\,|z\Id- {\mathbb D}^{(m)}   ({\bf a}_{\bf 3}  )   (t) |^{-2p}\bigr].
\]
Once the boundary blocks are frozen, ${\mathbb D}^{(m)}(\overrightarrow{{\bf \eta}_{\bf 3}})(t)$
becomes a matrix whose remaining randomness comes from interior innovation blocks only; the latter are independent
across different triple-blocks, and therefore we can treat ${\mathbb D}^{(m)}({\bf a}_{\bf 3})(t)$ as a matrix with
independent block contributions conditionally on ${\bf a}_{\bf 3}$. Next, we note that, conditionally on
$\overrightarrow{{\bf \eta}_{\bf 3}}={\bf a}_{\bf 3}$, the random vectors
\[
\left(
\left(
A_i\big(
{\bf a}_{3(k-1)}(j),
{\bf a}_{3k}(j)
\big)
\right)_
{i\in[\![\,\max\{j,(3k-3)m+1\},\,n\wedge 3km\,]\!]}
\right)_{k,j}
\]
are independent and define
\[
\widetilde{{\mathbb X}}^{(m)} ( {\bf a}_{\bf 3}  )   
=
\frac{1}{\sqrt n} \sum_{k=1}^{k_{n,m}}  \sum_{i= (3k-3)m+1}^{3km}  \sum_{j =1}^i
\Big (  A_i  (   {\bf a}_{3(k-1) }(j) ,   {\bf a}_{3k} (j) )   \Big )   E_{i,j} \, .
\]
We will regard $\widetilde{{\mathbb X}}^{(m)}({\bf a}_{\bf 3})$ as the independent-block component of the model, while
the mean terms $M_{i,j}(\cdot,\cdot)$ are handled separately since they are deterministic under the conditioning. Now, we set
\[
\sigma^2 \big( \widetilde{{\mathbb X}}^{(m)} ( {\bf a}_{\bf 3}  )  \big)
=
\Big\Vert  \E  \big [ \big ( \widetilde{{\mathbb X}}^{(m)} ( {\bf a}_{\bf 3}   ) \big )^2 \big  ]\Big\Vert
\]
and 
\[
R \big( \widetilde{{\mathbb X}}^{(m)} ( {\bf a}_{\bf 3}  )  \big)
=
\frac{1}{\sqrt{n}}\Big  \Vert \max_{1 \leq k \leq k_{n,m} \atop 1 \leq j \leq n}  
\Big  \Vert
 \sum_{i= \max \{ j, (3k-3)m+1\}}^{n \wedge 3km}  A_i  (   {\bf a}_{3(k-1) }(j) ,   {\bf a}_{3k} (j) )   E_{i,j}
\Big  \Vert  \Big \Vert_{\infty}.
\]
These quantities are the natural analogues of the terms quantifying the variance and uniform block-size in \cite{brailovskayaVanHandel} for sums of independent random matrices. The term $\sigma^2$ is the variance proxy of the frozen centered matrix, while $R$ is the maximal operator norm of one independent block contribution.  Finally, setting $x_{i,j}$ to be the $(i,j)$-entry of the matrix $\sqrt{n}\widetilde{{\mathbb X}}^{(m)} ( {\bf a}_{\bf 3}  )$ and taking into account the symmetry of the matrix,
we note that if $a \leq b$ then
\[
 \big [ \big ( \widetilde{{\mathbb X}}^{(m)} ( {\bf a}_{\bf 3}   ) \big )^2 \big  ]_{a,b}
=
\frac{1}{n} \sum_{j=1}^a x_{a,j} x_{b,j}
+
\frac{1}{n} \sum_{j=a+1}^b x_{j,a} x_{b,j}
+
\frac{1}{n} \sum_{j=b+1}^n x_{j,a} x_{j,b} .
\]
Taking the expectation and using independence, we get 
\[
\E  \big [ \big ( \widetilde{{\mathbb X}}^{(m)} ( {\bf a}_{\bf 3}   ) \big )^2 \big  ]_{a,b}  = \frac{1}{n} \sum_{j=1}^{a\wedge b} \E (x_{a,j} x_{b,j} ) + \frac{1}{n}  \sum_{j=a+1}^n  \E (x^2_{j,a}  ) {\mathbf 1}_{a=b} .
\]
But $ \E (x_{a,j} x_{b,j} ) = 0$ if $a \in [3(k-1)m+1 , 3km]$ and $b  \in [3(k'-1)m+1 , 3k'm]$ with $k \neq k'$. In addition,   $ |  \E (x_{a,j} x_{b,j} ) | \leq 4M^2$. Hence, for $u=(u_1, \dots, u_n)^t$  such that $\Vert u \Vert=1$, 
\begin{align*}
 \Big \Vert \E  \big [ \big ( \widetilde{{\mathbb X}}^{(m)} ( {\bf a}_{\bf 3}   ) \big )^2 \big ] \cdot u   \Big \Vert^2 &=  \sum_{a=1}^n \Big (  \sum_{b=1}^n \E  \big [ \big ( \widetilde{{\mathbb X}}^{(m)} ( {\bf a}_{\bf 3}   ) \big )^2 \big  ]_{a,b}   u_b \Big )^2  \\
 &=  \sum_{k=1}^{k_{n,m}} \sum_{a=3(k-1)m+1}^{3km} \Big ( \sum_{b=3(k-1)m+1}^{3km}  \E  \big [ \big ( \widetilde{{\mathbb X}}^{(m)} ( {\bf a}_{\bf 3}   ) \big )^2 \big  ]_{a,b}   u_b \Big )^2   
\end{align*}
which implies that 
\beq \label{boundonsigma}
\sigma^2 ( \widetilde{{\mathbb X}}^{(m)} ( {\bf a}_{\bf 3}  )  )  \leq 12 m   M^2  .
\eeq
\noindent
On the other hand, for any $k \geq 1$ and any $j \geq 1$, 
\[
 \Big  \Vert \sum_{i= \max \{ j, (3k-3)m+1\}}^{n \wedge 3km}  A_i  (   {\bf a}_{3(k-1) }(j) ,   {\bf a}_{3k} (j) )   E_{i,j}   \Big  \Vert
 =
 \sup_{\Vert u \Vert=1}  \Big  \Vert  \sum_{i= \max \{ j, (3k-3)m+1\}}^{n \wedge 3km} A_i  (   {\bf a}_{3(k-1) }(j) ,   {\bf a}_{3k} (j) )   E_{i,j}   . u  \Big  \Vert .  
\]
For $i\neq j$ and for any vector
$u=(u_1,\dots,u_n)^t$, one has
$E_{i,j}\,u = u_j\,e_i + u_i\,e_j,$
where $(e_1,\dots,e_n)$ denotes the canonical basis of $\mathbb R^n$. 
Hence, for $u=(u_1, \dots, u_n)^t$ such that $\Vert u \Vert=1$,
\begin{multline*}
 \Big  \Vert  \sum_{i= \max \{ j, (3k-3)m+1\}}^{n \wedge 3km}  A_i  (   {\bf a}_{3(k-1) }(j) ,   {\bf a}_{3k} (j) )   E_{i,j}   . u  \Big  \Vert^2  \\
 \leq u_j^2  \sum_{i= \max \{ j, (3k-3)m+1\}}^{n \wedge 3km}  A^2_i  (   {\bf a}_{3(k-1) }(j) ,   {\bf a}_{3k} (j) )  + 
  \Big (  \sum_{i= \max \{ j, (3k-3)m+1\}}^{n \wedge 3km}   u_i A_i  (   {\bf a}_{3(k-1) }(j) ,   {\bf a}_{3k} (j) )  \Big)^2  
 \leq 6 m (2 M )^2  \, 
\end{multline*}
where the second inequality follows by Cauchy-Schwarz and the fact that 
$|A_i(\cdot,\cdot)|\le 2M$. Hence
\beq \label{boundonR}
R ( \widetilde{{\mathbb X}}^{(m)} ( {\bf a}_{\bf 3}  )  ) \leq 2 \sqrt{6} M \sqrt{\frac{m}{n}}  .
\eeq
\noindent
The quantity $R(\widetilde{{\mathbb X}}^{(m)}({\bf a}_{\bf 3}))$ controls the size of the largest
independent block contribution in operator norm, and \eqref{boundonR} shows that each such contribution is small
when $m\ll n$.

Following the lines of the proof of Proposition 6.10 in 
\cite{brailovskayaVanHandel} with ${\mathbb D}^{(m)}   ({\bf a}_{\bf 3}  )   (t)$ replacing $X(t)$,  taking into account the upper bounds
\eqref{boundonsigma} and \eqref{boundonR}, which do not depend  on  $ {\bf a}_{\bf 3}$ and integrating with respect
to the law of  $\overrightarrow{{\bf \eta}_{\bf 3}}$, we infer that, for any $z\in\C$ with $\Im m(z)>0$, there exists an
universal positive constant $C$  such that
\begin{multline*}
 \Big |  \frac{d}{dt} \E\bigl[\tr\,|z\Id-{\mathbb D}^{(m)}  (t) |^{-2p}\bigr]\ \Big |  
 \leq   C  M^3 m  \sqrt{\frac{m}{n}}   \frac{ p^3}{\Im m (z)^{4}} \max \left \{  \E\bigl[\tr\,|z\Id-{\mathbb D}^{(m)}  (t) |^{-2p}\bigr]^{1- \frac{1}{2p}} ,   \frac{  \big (  p M m^{1/2} n^{-1/2} \big )^{6p-3}  }{\Im m (z)^{8p-4}}  \right \} . 
\end{multline*}
Taking into account Lemma 6.6 in \cite{brailovskayaVanHandel}, we derive that there exists an universal  positive constant $C$ such that 
\beq \label{firstboundtrace}
Q_1 = \Bigl|\Bigl(\E\bigl[\tr\,|z\Id-{\mathbb X}^{(m)}|^{-2p}\bigr]\Bigr)^{\frac{1}{2p}}-\Bigl(\E\bigl[\tr\,|z\Id- {\mathbb Z}^{(m)} |^{-2p}\bigr]\Bigr)^{\frac{1}{2p}}\Bigr| \
\leq   C    \frac{ M^3 p^2}{\Im m (z)^{4}} \Big (  m \sqrt{\frac{m}{n}}   +   p   \frac{m  }{n}  \Big )  .
\eeq
\subsubsection{Step 3. Covariance matching and construction of ${\mathbb G}^{(m)}$.}\label{Subsection:Step3} We shall now give an upper bound of 
\beq \label{defofQ2}
Q_2= \Bigl|\Bigl(\E\bigl[\tr\,|z\Id-{\mathbb Z}^{(m)}|^{-2p}\bigr]\Bigr)^{\frac{1}{2p}}-\Bigl(\E\bigl[\tr\,|z\Id- {\mathbb G}^{(m)} |^{-2p}\bigr]\Bigr)^{\frac{1}{2p}}\Bigr| ,
\eeq
where $ {\mathbb G}^{(m)}$ is a Wigner-type matrix with Gaussian entries that we construct below.
The purpose of this step is to replace the conditionally \emph{block-Gaussian} matrix ${\mathbb Z}^{(m)}$, which is Gaussian only conditionally on the frozen boundary blocks but retains a nontrivial block structure, by a Gaussian Wigner-type matrix ${\mathbb G}^{(m)}$ whose  entries have the same covariance structure as the process $(X^{(m)}_{i,j})_{i,j \in \mathbb{Z}}$. With this aim, we first write
\begin{align*}
{\mathbb Z}^{(m)} &=   \frac{1}{\sqrt n} \sum_{k=1}^{k_{n,m}}  \sum_{i= (3k-3)m+1}^{3km}  \sum_{j =1}^i  \Big ( Z_k^{(1)} ( i,  \overrightarrow{{\bf \eta}_{3(k-1)} }(j)) + Z_k^{(2)} ( i,  \overrightarrow{{\bf \eta}_{3k} }(j))  \Big  )   E_{i,j}  \\
&=   \frac{1}{\sqrt n}  \sum_{i= 1}^{m}  \sum_{j =1}^i  Z_1^{(1)} ( i,  \overrightarrow{{\bf \eta}_{0} }(j))  E_{i,j}   +  \frac{1}{\sqrt n}  \sum_{i= 3 ( k_{n,m} -1)m+ 1}^{n}  \sum_{j =1}^i  Z_{k_{n,m}}^{(2)} ( i,  \overrightarrow{{\bf \eta}_{3 k_{n,m} } }(j))  E_{i,j}  \\
 & \quad+  \frac{1}{\sqrt n} \sum_{k=1}^{k_{n,m} -1} \sum_{j =1}^n  \Big \{  \sum_{i=  \max(j, (3k-3)m+1)}^{3km} 
Z_k^{(2)} ( i,  \overrightarrow{{\bf \eta}_{3k} }(j))  E_{i,j} +   \sum_{i=  \max(j, 3km+1)}^{3(k+1)m} 
Z_{k+1}^{(1)} ( i,  \overrightarrow{{\bf \eta}_{3k} }(j))   E_{i,j} \Big \}     \, ,
\end{align*}
where 
\[
Z_k^{(1)} ( i,  \overrightarrow{{\bf \eta}_{3(k-1)} }(j)) :=   \big [ \overrightarrow{G}^{(1)}_k   (   \overrightarrow{{\bf \eta}_{3(k-1)} }(j) )  \big ]_i +  M^{(3k-2)}_{i,j}  (    \overrightarrow{{\bf \eta}_{3(k-1)} }(j)  )  
\]
and
\[
Z_k^{(2)} ( i,  \overrightarrow{{\bf \eta}_{3k} }(j)) :=    \big [  \overrightarrow{G}^{(2)}_k   (   \overrightarrow{{\bf \eta}_{3k} }(j) )  \big ]_i +  M^{(3k)}_{i,j}  (    \overrightarrow{{\bf \eta}_{3k} }(j)  ) .
\]
Note that the first line is nothing but the definition of ${\mathbb Z}^{(m)}$ expressed in terms of the two Gaussian components associated
with the left and right boundaries of each triple-block. The grouping in the second line  allows us to represent ${\mathbb Z}^{(m)}$ as a sum of independent blocks indexed by the boundary blocks.  Now, we shall embed the relative terms in the same way into $\mathbb R^n$ with the correct support as in Step 1, by filling with zeros outside the corresponding sub-blocks. More precisely, we set for any $k=1, \dots, k_{m,n}-1$
\[
\begin{aligned}
    \overrightarrow{Z}_{k}^{(2)} (  \overrightarrow{{\bf \eta}_{3k} }(j))  &:=  \Big ( \overrightarrow{0}_{1,3(k-1)m}  ,  \big (  Z_{k}^{(2)} ( i,  \overrightarrow{{\bf \eta}_{3k} }(j)) \big )_{i \in [\! [  3(k-1)m+1, 3km ] \!]} ,   \overrightarrow{0}_{3km+1, n }  \Big ),
    \\ \overrightarrow{Z}_{k+1}^{(1)} (  \overrightarrow{{\bf \eta}_{3k} }(j))  &:=  \Big ( \overrightarrow{0}_{1,3km}  ,  \big (  Z_{k+1}^{(1)} ( i,  \overrightarrow{{\bf \eta}_{3k} }(j)) \big )_{i \in [\! [  3km+1, 3(k+1)m ] \!]} ,   \overrightarrow{0}_{3(k+1)m+1, n }  \Big ).
\end{aligned}
 \]
Finally, define for $k=0$ and $k=k_{m,n}$
\[
\begin{aligned}
\overrightarrow{Z}_{0} (  \overrightarrow{{\bf \eta}_{0} }(j)) &:=  \Big (   \big (   Z_1^{(1)} ( i,  \overrightarrow{{\bf \eta}_{0} }(j)) \big )_{i \in [\! [  1, m ] \!]} , \overrightarrow{0}_{m+1,n } \Big )   ,\\
  \overrightarrow{Z}_{k_{n,m} } (  \overrightarrow{{\bf \eta}_{3k_{n,m} } }(j)) &:=  \Big (  \overrightarrow{0}_{ 3 ( k_{n,m} -1)m ,n } ,   \big (    Z_{k_{n,m}}^{(2)} ( i,  \overrightarrow{{\bf \eta}_{3 k_{n,m} } }(j)) \big )_{i \in [\! [  3 ( k_{n,m} -1)m+ 1, n ] \!]} \Big ) .  
\end{aligned}
\] 
and, for any integer $k \in [1, k_{n,m} -1]$, set 
\[
\overrightarrow{Z_{k}} (  \overrightarrow{{\bf \eta}_{3k} }(j)):= \overrightarrow{Z}_{k}^{(2)} (  \overrightarrow{{\bf \eta}_{3k} }(j))  + \overrightarrow{Z}_{k+1}^{(1)} (  \overrightarrow{{\bf \eta}_{3k} }(j)) .
\]
For $1\le k\le k_{n,m}-1$, the boundary block $\overrightarrow{{\bf \eta}_{3k}}(j)$ influences the right outer part
of the $k$-th triple-block and the left outer part of the $(k+1)$-th triple-block. The definitions of $\overrightarrow{Z_0}$ and $\overrightarrow{Z_{k_{n,m}}}$ handle the
incomplete triple-blocks at the beginning and at the end of the index range. Clearly the random vectors
$ \big ( \overrightarrow{Z_{k}} (  \overrightarrow{{\bf \eta}_{3k} }(j)) \big )_{k \geq 0, j \geq 1} $
are centered and  independent since $((  \overrightarrow{{\bf \eta}_{3k} }(j))_{k \geq 0, j \geq 1} $ are.
In addition, if we denote by ${\bf Z}_n (j)$ the vector of ${\mathbb R}^n$ defined by 
\[
{\bf Z}_n (j)=   \sum_{k=0}^{k_{n,m}}\overrightarrow{Z_{k}} (  \overrightarrow{{\bf \eta}_{3k } }(j))  \, ,
\]
we have
\beq  \label{identitycov}
\Cov \Big (  \big [ {\bf Z}_n (j) \big ]_i ,  \big [ {\bf Z}_n (j) \big ]_{\ell}   \Big ) = 
\Cov \big ( X^{(m)}_{i,j} , X^{(m)}_{\ell,j}   \big )  .
\eeq
The proof of \eqref{identitycov} is non-trivial and is postponed to Appendix~\ref{appendix:covariance-identity} below. It shows that, for each fixed $j$, the Gaussian vector ${\bf Z}_n(j)$ reproduces the covariance structure of the original $m$-dependent column process $\big(X^{(m)}_{i,j}\big)_{1\le i\le n}$. This matching of second moments is the key input for constructing the Gaussian Wigner-type matrix ${\mathbb G}^{(m)}$ in the next step.
Let  now    $  \big ( G^{(m)}_{k,j}  \big )_{k \geq 0, j \geq 1} $  be independent  ${\mathbb R}^n $ Gaussian random vectors such that, for any $k$ and $j$  they are centered and have the same covariance structure as 
$\overrightarrow{Z_{k}} (  \overrightarrow{{\bf \eta}_{3k} }(j))$. Now setting $ g^{(m)}_{i,j}  :=  \big [  G^{(m)}_{k,j}  \big ]_i $ the $i$-th coordinate of the vector $  \big ( G^{(m)}_{k,j}  \big )$, we define 
\beq \label{defofGm}
{\mathbb G}^{(m)} =   \frac{1}{\sqrt n} \sum_{k=0}^{k_{n,m}}  \sum_{i= (3k-3)m+1}^{3(k+1)m}  \sum_{j =1}^i  g^{(m)}_{i,j}    E_{i,j}  . 
\eeq
To simplify the notations, we also write
\beq \label{defofZm}
{\mathbb Z}^{(m)} =   \frac{1}{\sqrt n} \sum_{k=0}^{k_{n,m}}  \sum_{i= (3k-3)m+1}^{3(k+1)m}  \sum_{j =1}^i  z^{(m)}_{i,j}    E_{i,j}  . 
\eeq
where for $k$ fixed 
\[
z^{(m)}_{i,j} =\big[\overrightarrow{Z_k}(\overrightarrow{{\bf \eta}_{3k}}(j))\big]_i= Z_k^{(2)} ( i,  \overrightarrow{{\bf \eta}_{3k} }(j))    {\bf 1}_{i \in [\![ \max(j, (3k-3)m+1), 3km ]\!]} +   
Z_{k+1}^{(1)} ( i,  \overrightarrow{{\bf \eta}_{3k} }(j))   {\bf 1}_{i \in [\![ \max(j, 3km+1),  3(k+1)m  ]\!]} . 
\]
Now we handle the quantity $Q_2$ defined in \eqref{defofQ2} with ${\mathbb G}^{(m)} $ constructed in \eqref{defofGm}.  Since the  random vectors $ \big ( \overrightarrow{Z_k} (  \overrightarrow{{\bf \eta}_{3k} }(j)) \big )_{k \geq 0, j \geq 1} $ are  independent, ${\mathbb Z}^{(m)}$ is a Wigner-type matrix such that the columns  constructed with the entries above the diagonal are independent and the rows are independent provided they are spaced by more than  $6m$.  To give a suitable upper bound for $Q_2$, we cannot use Theorem 6.8 in \cite{brailovskayaVanHandel} directly. Indeed, the underlying random variables are not bounded since their law conditionally to $  \overrightarrow{{\bf \eta}_{\bf 3} } = {\bf a}_{\bf 3} $ is the law of a Gaussian random variable plus a bounded random variable. We then have to adapt to our context the proof of their Theorem 6.8 and then modify the proof of their Proposition 6.10. With this aim, for any $t \in [0,1]$, let 
\[
{\mathbb Z}^{(m)} (t) = \sqrt{t}{\mathbb Z}^{(m)} + \sqrt{1-t}{\mathbb G}^{(m)}  \, .
\]
For any $q \geq 1$, define
\[
R_q ({\mathbb Z}^{(m)} ) = \frac{1}{\sqrt{n}} \Big (   \sum_{k=0}^{k_{n,m}} \sum_{j =1}^n  \E \Big [  \tr \Big | \sum_{i= \max( j , (3k-3)m+1)}^{3(k+1)m  }   z^{(m)}_{i,j}  E_{i,j} \Big |^q\Big ] \Big )^{1/q}
\]
and
\[
\sigma_q ({\mathbb Z}^{(m)} ) = \Big (  \tr \Big (  \E \big  ( {\mathbb Z}^{(m)} \big )^2  \Big )^{q/2} \Big )^{1/q}  . 
\]
Following the proof of  Proposition 6.10 in \cite{brailovskayaVanHandel} by taking $q \geq 6p$ and by using their Proposition 5.1 by modifying their  $p_j$'s by multiplying them by the constant $q/(q-k)$, we derive that 
\begin{multline} \label{inetracepropinter}
\left | \frac{d}{dt} \E\bigl[\tr\,|z\Id-Z^{(m)} (t)|^{-2p}\bigr] \right |  \leq C \frac{(16p)^{6p}}{ \Im m (z)^{8p}} \Big [  R_{6p} ({\mathbb Z}^{(m)} )   \Big ]^{6p} \\
+ \frac{1}{2} \sum_{k=3}^{6p-1} (16p)^k   \Big [  R_q ({\mathbb Z}^{(m)} )   \Big ]^{(k-2)q/(q-2)}   \Big [  \sigma_q ({\mathbb Z}^{(m)} )   \Big ]^{2 (q-k)/(q-2)}  \Big (  \E\bigl[\tr\,|z\Id-Z^{(m)} (t)|^{-2p-k}  \Big )^{(q-k)/q} \, .
\end{multline}
We note now that 
\[
 \sigma^2_q ({\mathbb Z}^{(m)} )  \leq  \sigma^2 ({\mathbb Z}^{(m)} ) :=   \big  \Vert  \E  \big [ \big ( {{\mathbb Z}}^{(m)}  \big )^2 \big  ] \big \Vert .
\]
By \eqref{identitycov},  $ \big| \Cov \big (  \big [ {\bf Z}_n (j) \big ]_a ,  \big [ {\bf Z}_n (j) \big ]_{b}   \big ) \big| \leq M   \delta(|b-a|){\bf 1}_{|b-a| \leq m} $. This entails that 
\[
 \Big|\E  \big [ \big ( {{\mathbb Z}}^{(m)}  \big )^2 \big  ]_{a,b}\Big| \leq M \delta(|b-a|){\bf 1}_{|b-a| \leq m}  \, .
\]
It follows that, for $u=(u_1, \dots, u_n)^t$  such that $\Vert u \Vert=1$
\begin{multline*}
 \Big \Vert \E  \big [ \big ( {{\mathbb Z}}^{(m)} \big )^2 \big ] \cdot u   \Big \Vert^2 =  \sum_{a=1}^n \Big (  \sum_{b=1}^n \E  \big [ \big ({{\mathbb Z}}^{(m)}\big )^2 \big  ]_{a,b}   u_b \Big )^2  \\
 \leq   M^2  \sum_{a=1}^n \Big ( \sum_{b=1 }^{n}   \delta(|b-a|)  |u_b | \Big )^2   \leq    M^2  \sum_{a=1}^n  \sum_{b=1 }^{n}    \sum_{b'=1 }^{n}   \delta(|b-a|)  \delta(|b'-a|)   |u_b | |u_{b'} |  \\
 \leq   M^2  \sum_{a=1}^n  \sum_{b=1 }^{n}    \sum_{b'=1 }^{n}   \delta(|b-a|)  \delta(|b'-a|) u^2_b  \leq   4  M^2  \Big ( \sum_{k =0}^{\infty}    \delta( k)  \Big )^2 \sum_{b=1 }^{n}   u^2_b  =     4  M^2  \Big ( \sum_{k =0}^{\infty}    \delta( k)  \Big )^2  \, ,
\end{multline*}
where we have used that $2|u_b | |u_{b'} | \leq u_b^2 + u_{b'}^2 $ for the third inequality. Consequently 
\beq  \label{boundforsigma2}
 \sigma^2 ({\mathbb Z}^{(m)} )  \leq 2 M \sum_{i \geq  0} \delta(i) < \infty \, .
\eeq
We give now an upper bound for $R_q ({\mathbb Z}^{(m)} ) $. Assume that $q $ is even (so $q= 2q'$) and let
\[
B := \Big| \sum_{i= \max( j , (3k-3)m+1 ) }^{3(k+1)m  }   z^{(m)}_{i,j}  E_{i,j} \Big|^2.
\]
Then $B$ is a real symmetric matrix of order $n$ that is also positive semidefinite. By the inequality for the trace of a product of a real symmetric  positive semidefinite matrix, we have 
\[
\Tr ( B^{q'} ) \leq \Big (  \Tr ( B) \Big )^{q'} .
\]
But
\[
 \Tr (B) \leq 2 \sum_{i= (3k-3)m+1}^{3(k+1)m  }   \big (  z^{(m)}_{i,j}  \big )^2   . 
\]
Hence, for any $q =2q'$ with $q'$ a positive integer, 
\[
 \E \Big [  \tr \Big | \sum_{i= \max( j , (3k-3)m+1)}^{3(k+1)m  }   z^{(m)}_{i,j}  E_{i,j} \Big |^q\Big ]   \leq n^{-1}\Big ( 2  \sum_{i= (3k-3)m+1}^{3(k+1)m  }   \Vert  z^{(m)}_{i,j}  \Vert_q^2 \Big )^{q'} . 
\]
But, for $ i \in [\![ \max(j, (3k-3)m+1), 3(k+1)m ]\!]$, 
\[
\Vert  z^{(m)}_{i,j}  \Vert_q \leq  \Vert Z_k^{(2)} ( i,  \overrightarrow{{\bf \eta}_{3k} }(j))   \Vert_q +  \Vert
Z_{k+1}^{(1)} ( i,  \overrightarrow{{\bf \eta}_{3k} }(j))  \Vert_q . 
\]
Next, by the definition of $Z_k^{(2)}$, it follows that
\[
 \Vert Z_k^{(2)} ( i,  \overrightarrow{{\bf \eta}_{3k} }(j))   \Vert_q \leq M +  \big\Vert \big [  \overrightarrow{G}^{(2)}_k   (   \overrightarrow{{\bf \eta}_{3k} }(j) )  \big ]_i \big\Vert_q \, ,
\]
and the law of $\big [  \overrightarrow{G}^{(2)}_k   (   \overrightarrow{{\bf \eta}_{3k} }(j) )  \big ]_i $ given $\overrightarrow{{\bf \eta}_{\bf 3} }={\bf a}_{\bf 3}$ is the law of a centered Gaussian random variable with variance, say $c^2$, 
which is bounded by $M^2$.  Now, if $N \sim {\mathcal N} ( 0,c^2)$,
\[
\Vert N \Vert^q_q  = \frac{c^{q}  q ! }{ 2^{q'} q' !}\leq   c^q e^{-q/2}  q^{1+q/2} \, , 
\]
where we used the fact that $e n^n e^{-n} \leq n! \leq  e n^n e^{-n}  n $. It follows that 
\[
 \Vert Z_k^{(2)} ( i,  \overrightarrow{{\bf \eta}_{3k} }(j))   \Vert_q \leq M +  M e^{-1/2}  q^{(q+2)/(2q)}   \, .
\]
A similar bound is valid for $ \Vert
Z_{k+1}^{(1)} ( i,  \overrightarrow{{\bf \eta}_{3k} }(j))  \Vert_q$. So, 
for $ i \in [\![ \max(j, (3k-3)m+1), 3(k+1)m ]\!]$, 
\[
\Vert  z^{(m)}_{i,j}  \Vert_q \leq     2 M +   2  e^{-1/2}  M  q^{(q+2)/(2q)}   \leq 4M  q^{(q+2)/(2q)}  . 
\]
All the above computations imply that, for any positive and even integer $q$, there exists a positive  universal constant $C$ such that 
\beq \label{boundonRq}
R_q ({\mathbb Z}^{(m)} ) \leq C  M     q^{1/2}  \Big ( \frac{m}{n} \Big )^{ (q-2)/(2q)}   \, .
\eeq
Let now 
\[
A_q(m,n) =   C  (M  +1)  \Big ( \frac{n}{m} \Big )^{\frac1q }   \Big ( \frac{m q}{n} \Big )^{\frac12 } .
\]
Following the lines of the proof of Proposition 6.10 in \cite{brailovskayaVanHandel} and using the upper bounds \eqref{boundforsigma2} and \eqref {boundonRq},  the following intermediate proposition holds:
\begin{proposition} \label{propcomparaisonmomentspinter}Let $p\in\mathbb{N}^*$ and $q=12p^2$. Let $z\in\C$ with $\Im m(z) \in ]0,1]$. Define $Q_2$ by \eqref{defofQ2}. Then
\[
Q_2
 \ll  \frac{1 }{\Im m (z)^{4}}  p^2  A^{q/(q-2)}_q(m,n)  \Big ( 1 + p  A^{2q/(q-2)}_q(m,n) \Big )  +   \frac{p^3 }{\Im m (z)^{4}} A^{-\frac{q}{(q-2)(2p-1)}}_q(m,n) A^{6p/(2p-1)}_{6p}(m,n)   \, .
\]
\end{proposition}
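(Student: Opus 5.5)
The plan is to run the interpolation scheme behind Theorem 6.8 of \cite{brailovskayaVanHandel} along the path ${\mathbb Z}^{(m)}(t)=\sqrt t\,{\mathbb Z}^{(m)}+\sqrt{1-t}\,{\mathbb G}^{(m)}$, $t\in[0,1]$, starting from the differential inequality \eqref{inetracepropinter}. Write $\phi(t)=\E[\tr|z\Id-{\mathbb Z}^{(m)}(t)|^{-2p}]$. The independent units are the vectors $\overrightarrow{Z_k}(\overrightarrow{{\bf \eta}_{3k}}(j))$ and their Gaussian copies $G^{(m)}_{k,j}$. By \eqref{identitycov} they have matching covariances, so the second-order terms cancel in the cumulant expansion; this is why the sum in \eqref{inetracepropinter} begins at $k=3$.

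The first step is to remove the negative moments of order $2p+k$ in \eqref{inetracepropinter}. Since $\||z\Id-H|^{-1}\|\le \Im m(z)^{-1}$ for self-adjoint $H$, we have
\[
\tr|z\Id-H|^{-2p-k}\le \Im m(z)^{-k}\,\tr|z\Id-H|^{-2p},
\]
and therefore $\bigl(\E\tr|z\Id-{\mathbb Z}^{(m)}(t)|^{-2p-k}\bigr)^{(q-k)/q}\le \Im m(z)^{-k(q-k)/q}\,\phi(t)^{(q-k)/q}$. Next substitute \eqref{boundforsigma2}, which bounds $\sigma_q$ by a constant depending on $M$ and $\sum\delta$, and \eqref{boundonRq}. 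With $q$ even these give $R_q\le A_q(m,n)$ and $R_{6p}\le A_{6p}(m,n)$, up to constants absorbed in $C$. This yields
\[
|\phi'(t)|\le C\frac{(16p)^{6p}}{\Im m(z)^{8p}}A_{6p}^{6p}+C\sum_{k=3}^{6p-1}(16p)^k\,\Im m(z)^{-k}A_q^{(k-2)q/(q-2)}\,\phi(t)^{1-k/q}.
\]
The choice $q=12p^2\ge 6p$ is even and makes $k/q\le 1/(2p)$ for all relevant $k$. Consequently $\phi^{1-k/q}\le \max(\phi^{1-1/(2p)},1)$, and one can normalize using $\phi\le \Im m(z)^{-2p}$.

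The second step converts this into a bound on $\phi^{1/(2p)}$ via Lemma 6.6 of \cite{brailovskayaVanHandel}. That lemma states the following. If $|\phi'|\le a\,\phi^{1-1/(2p)}+b$, then $|\phi(1)^{1/(2p)}-\phi(0)^{1/(2p)}|\lesssim a/p+(b/p)^{1/(2p)}$, up to a maximum of the form used in Step 2 of the paper. I would group the sum into the dominant terms $k=3,4$, which behave like $p^2A_q^{q/(q-2)}$ and $p^3A_q^{2q/(q-2)}$ after dividing by $p$ with powers $\Im m(z)^{-4}$. Higher $k$ are controlled geometrically once $pA_q\lesssim 1$; otherwise the bound is trivial because $Q_2\le 2/\Im m(z)$. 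This produces the first term $p^2A_q^{q/(q-2)}(1+pA_q^{2q/(q-2)})/\Im m(z)^4$. The additive term $b\asymp (16p)^{6p}A_{6p}^{6p}\Im m(z)^{-8p}$ has to be balanced against $\phi^{1-1/(2p)}$ exactly as in the maximum displayed in Step 2. Taking the $(2p-1)$-th root gives the second term, with exponent $6p/(2p-1)$ on $A_{6p}$ and the compensating negative power of $A_q$.

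I expect the main obstacle to be the exponent bookkeeping in this balancing. The mixed factors $A_q^{(k-2)q/(q-2)}$ must be reduced to the two extreme powers by interpolation (Hölder in $k$), and one must track how the $\Im m(z)$ powers collapse to $\Im m(z)^{-4}$. The unbounded Gaussian-plus-bounded entries cause no additional problem here, since they enter only through $R_q$ via \eqref{boundonRq}.
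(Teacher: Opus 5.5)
There is a genuine gap. It lies in the exponent bookkeeping, which is the whole content of this proposition.

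\textbf{The step that fails.} Your inequality $\phi^{1-k/q}\le\max(\phi^{1-1/(2p)},1)$ goes the wrong way. Since $k/q\le 1/(2p)$, you have $1-k/q\ge 1-1/(2p)$. So whenever $\phi>1$ (the typical case when $\Im m(z)$ is small and $z$ is near the spectrum), $\phi^{1-k/q}\ge\phi^{1-1/(2p)}$. Removing the negative moments at no cost, as you do, leaves the $k=3$ term with $\phi^{1-3/q}$. This can reach the form $\phi^{1-1/(2p)}$ needed for Lemma~6.6 of \cite{brailovskayaVanHandel} only by paying a power of $\Im m(z)^{-1}$. Indeed, $1-k/q=1-\frac{1}{2p}+\frac{6p-k}{q}$ and $\phi\le\Im m(z)^{-2p}$ give $\phi^{1-k/q}\le\Im m(z)^{-(1-k/(6p))}\,\phi^{1-1/(2p)}$. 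The paper makes the equivalent trade at the outset. It uses $\|(z\Id-\mathbb Z^{(m)}(t))^{-1}\|\le\Im m(z)^{-1}$ and Jensen to get $\E\tr|z\Id-\mathbb Z^{(m)}(t)|^{-2p-k}\le\Im m(z)^{-(\kappa+k)}\,T_m(t)^{1-\kappa/(2p)}$. It then chooses $\kappa=(q-6p)/(q-3)=2p/(2p+1)$, so that after raising to the power $(q-k)/q$ the $k=3$ term carries exactly $T_m^{1-1/(2p)}$, with factor $\Im m(z)^{-(3+\kappa)}\le\Im m(z)^{-4}$.

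\textbf{Consequences for the rest of your accounting.} Write $A:=A_q^{q/(q-2)}(m,n)$.
\begin{itemize}
\item The $k$-th coefficient of $\phi^{1-1/(2p)}$ is bounded by $(16p)^kA^{k-2}\Im m(z)^{-(k+1)}$. Consecutive terms therefore have ratio of order $pA/\Im m(z)$, not $pA$. Your split (geometric control once $pA_q\lesssim 1$, trivial bound otherwise) leaves the regime $\Im m(z)\lesssim pA\lesssim 1$ unhandled.
\item The $k=4$ term gives $p^3A^{2}$, with a power of $\Im m(z)^{-1}$ exceeding $4$. This is not the target's $p^2A\cdot pA_q^{2q/(q-2)}=p^3A^3$. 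In the paper, log-affinity in $k$ reduces the sum to the endpoints $k=3$ and $k=6p$. The $p^3A^3\,\Im m(z)^{-4}$ term comes from the $k=6p$ endpoint: it is the $(2p-1)$-th root, via Lemma~6.6, of the second entry $(32pA)^{6p-3}\Im m(z)^{-(8p-4)}$ of the maximum in \eqref{eq:derivative-main-Q2}.
\end{itemize}
The remaining part of your plan is fine: balancing $b\asymp(16p)^{6p}A_{6p}^{6p}\Im m(z)^{-8p}$ against $a\asymp p^3A\,\Im m(z)^{-4}$ gives $(b/a)^{1/(2p-1)}$, which is the second term.

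\textbf{A repair along your own lines.} You do not need the endpoint argument. With the corrected normalization, split on $16pA$ versus $\Im m(z)/2$:
\begin{itemize}
\item If $16pA\le\Im m(z)/2$, the sum is geometric and $|\phi'|\lesssim p^3A\,\Im m(z)^{-4}\phi^{1-1/(2p)}+b$. Lemma~6.6 turns this into the claim.
\item If $16pA>\Im m(z)/2$, then $p^2A\,\Im m(z)^{-4}\gtrsim\Im m(z)^{-1}\ge Q_2$, so the bound is trivial.
\end{itemize}
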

\noindent{\textbf{Proof of Proposition \ref{propcomparaisonmomentspinter}.}}  We start from  \eqref{inetracepropinter} and note that, for a nonnegative real number $\kappa$ such that $\kappa \leq  2p$ we have 
\beq \label{Upperboundjensen}
 \Big (  \E\bigl[\tr\,|z\Id-\mathbb Z^{(m)}(t)|^{-2p-k} \bigr] \Big )^{(q-k)/q} \leq \Big (  \frac{1}{\Im m (z)^{ \kappa  + k }} \Big )^{(q-k)/q}   \Big (  \E\bigl[\tr\,|z\Id-\mathbb Z^{(m)}(t)|^{-2p} \bigr]  \Big )^{\beta (q-k) /q } \, , 
\eeq
where $\beta =1 -  \frac { \kappa }{2p}  $.  This upper bound follows from the fact that 
\[
\Vert (z\Id-\mathbb Z^{(m)}(t))^{-1} \Vert  \leq  ( \Im m (z) )^{-1}
\]
and that, by comparison of the Schatten norms and Jensen's inequality, 
\[
 \E\bigl[\tr\,|z\Id-\mathbb Z^{(m)}(t)|^{-2p+ \kappa} \bigr] \leq \Big (  \E\bigl[\tr\,|z\Id-\mathbb Z^{(m)}(t)|^{-2p}  \bigr]  \Big )^{\beta } \, .
\]
Next, the upper bound \eqref{Upperboundjensen} together with \eqref{boundforsigma2} and the fact that $\Im m (z) \in ]0,1]$ imply
\begin{multline}\label{eq:derivative-start-Q2} \left | \frac{d}{dt} \E\bigl[\tr\,|z\Id-\mathbb Z^{(m)}(t)|^{-2p} \bigr] \right |  \leq C \frac{(16p)^{6p}}{\Im m (z)^{8p}} R_{6p}^{6p} ({\mathbb Z}^{(m)} ) \\
+ \frac{C}{2} \sum_{k=3}^{6p-1} (16p)^k   \Big [  R_q ({\mathbb Z}^{(m)} )   \Big ]^{ \frac{(k-2)q}{q-2}}     \frac{1}{\Im m (z)^{ \kappa + k }} \Big (  \E\bigl[\tr\,|z\Id-\mathbb Z^{(m)}(t)|^{-2p} \bigr]  \Big )^{\frac{\beta (q-k)}{q}} \, .
\end{multline}
where $C$ is a positive constant. We continue by setting
\[
T_m(t):=\E\bigl[\tr\,|z\Id-Z^{(m)}(t)|^{-2p}\bigr].
\]
Thus \eqref{eq:derivative-start-Q2} yields, for each fixed $\kappa\in[0,2p]$,
\[
\left|
\frac{d}{dt}T_m(t)
\right|
\le
C\,\frac{(16p)^{6p}}{\Im m (z)^{8p}}
A_{6p}^{6p}(m,n)
+
\frac{C}{2}
\sum_{k=3}^{6p-1}
(16p)^k
\Big[A_q(m,n)\Big]^{\frac{(k-2)q}{q-2}}
\frac{1}{\Im m (z)^{k+\kappa}}
\Big(T_m(t)\Big)^{\beta\frac{q-k}{q}} .
\]
We now use the endpoint convexity argument. For fixed $\kappa$ and $\beta$, the logarithm of the
$k$-th summand in the last sum is affine in $k$. Hence the
summand is bounded by the maximum of its endpoint values at $k=3$ and $k=6p$. After absorbing the
number of terms in the sum into the constants
, we get
\begin{multline}\label{eq:endpoint-kappa-Q2}
\frac{1}{2}
\sum_{k=3}^{6p-1}
(16p)^k
\Big[A_q(m,n)\Big]^{\frac{(k-2)q}{q-2}}
\frac{1}{\Im m (z)^{k+\kappa}}
\Big(T_m(t)\Big)^{\beta\frac{q-k}{q}}
\\
\le
\frac{1}{8}
\max\Bigg\{
(32p)^3
\Big[A_q(m,n)\Big]^{\frac{q}{q-2}}
\frac{1}{\Im m (z)^{3+\kappa}}
\Big(T_m(t)\Big)^{\beta\frac{q-3}{q}},
\\
(32p)^{6p}
\Big[A_q(m,n)\Big]^{\frac{(6p-2)q}{q-2}}
\frac{1}{\Im m (z)^{6p+\kappa}}
\Big(T_m(t)\Big)^{\beta\frac{q-6p}{q}}
\Bigg\}.
\end{multline}
On the other hand, selecting 
\[
\kappa = \frac{q -6p}{q-3} = \frac{2p}{2p+1} 
\]
 then $\beta  (q-3)/q= 1-1/(2p)$, and hence 
 we get \begin{multline}\label{eq:derivative-main-Q2}
\left|
\frac{d}{dt}T_m(t)
\right|
\le
C\,\frac{(16p)^{6p}}{\Im m (z)^{8p}}
A_{6p}^{6p}(m,n)
+
C\,\frac{p^3 A_q^{\frac{q}{q-2}}(m,n)}{\Im m (z)^4}
\max\left\{
\Big(T_m(t)\Big)^{1-\frac{1}{2p}},
\left(
32p\,A_q^{\frac{q}{q-2}}(m,n)
\right)^{6p-3}
\frac{1}{\Im m (z)^{8p-4}}
\right\}.
\end{multline}
It remains to incorporate the first term into the preceding differential inequality and use Lemma 6.6 in \cite{brailovskayaVanHandel} to end the proof.

\subsubsection{Step 4. Comparison between $\mathbb G_n^{(m)}$ and $\mathbb G_n$}
\label{Section:Step4}

We now compare the Gaussian matrix $\mathbb G_n^{(m)}$, whose covariance structure is that of the finite-memory process
$(X^{(m)}_{k,\ell})_{k,\ell \in \mathbb{Z}}$, with the Gaussian comparison matrix $\mathbb G_n$, whose covariance structure is that of the original process
$(X_{k,\ell})_{k,\ell \in \mathbb{Z}}$. We aim to find an upper bound for the quantity
\[ 
Q_3 = \Bigl|\Bigl(\E\bigl[\tr\,|z\Id-{\mathbb G}^{(m)}|^{-2p}\bigr]\Bigr)^{\frac{1}{2p}}-\Bigl(\E\bigl[\tr\,|z\Id- {\mathbb G}|^{-2p}\bigr]\Bigr)^{\frac{1}{2p}}\Bigr| .
\]
We shall interpolate between the moments of the resolvents of $ {\mathbb G}^{(m)}$ and ${\mathbb G}$ in a similar fashion as before. Since these two matrices are both centered Gaussian matrices and differ only through the covariance structure of their entries, we use a Gaussian covariance interpolation between the two Gaussian fields. Differentiating the corresponding resolvent moment along this interpolation and applying Gaussian integration by parts reduces the comparison to a perturbative estimate involving only the difference of the two covariance kernels. More precisely, we obtain:
\beq\label{eq:main-bound}
\Bigl|\Bigl(\E\bigl[\tr\,|z\Id-G|^{-2p}\bigr]\Bigr)^{\frac{1}{2p}}-\Bigl(\E\bigl[\tr\,|z\Id-G^{(m)}|^{-2p}\bigr]\Bigr)^{\frac{1}{2p}}\Bigr|
\ \le\
\frac{2p}{n\,\Im m (z)^{3}}\ \Delta_{m,n} \, , 
\eeq
with $\Delta_{m,n}$ being the total covariance discrepancy
\[
\Delta_{m,n}
:=
\sum_{j=1}^n
\sum_{i=j}^n
\sum_{k=j}^n
\big|
\Cov(X_{i,j},X_{k,j})
-
\Cov(X^{(m)}_{i,j},X^{(m)}_{k,j})
\big|.
\]
Since the variables are centered, we can write
\[
\Cov(X_{i,j},X_{k,j})
-
\Cov(X^{(m)}_{i,j},X^{(m)}_{k,j})
=
\E\big[(X_{i,j}-X^{(m)}_{i,j})X_{k,j}\big]
+
\E\big[X^{(m)}_{i,j}(X_{k,j}-X^{(m)}_{k,j})\big].
\]
By Cauchy-Schwarz, stationarity, and the contraction property of conditional expectation in $L^2$,
\[
\begin{aligned}
\big|
\Cov(X_{i,j},X_{k,j})
-
\Cov(X^{(m)}_{i,j},X^{(m)}_{k,j})
\big|
&\le
\|X_{i,j}-X^{(m)}_{i,j}\|_2\,\|X_{k,j}\|_2
+
\|X^{(m)}_{i,j}\|_2\,\|X_{k,j}-X^{(m)}_{k,j}\|_2
\le
2\|X_{0,0}\|_2\,\delta(m).
\end{aligned}
\]
This yields that 
\[
\Delta_{m,n}  \leq 2 n^3  \Vert X_{0,0} \Vert_2  \delta (m)  \, ,
\]
and hence the upper bound 
\[
Q_3 \leq \frac{2p n^2}{\Im m (z)^{3}}  \Vert X_{0,0} \Vert_2  \delta (m) \, .
\]

\subsubsection{Step 5. Conclusion of the proof of Proposition~\ref{propcomparaisonmomentsp}}

We now collect the bounds for $Q_1$, $Q_2$ and $Q_3$ obtained in Steps~2, 3 and~4. It remains only to simplify the powers of
$A_q(m,n)$ appearing in the estimate for $Q_2$. Note that for any $r \geq 4$,
\[
\frac{r}{2(r-2)} \leq \frac12 + \frac{1}{\log (q) } \, .
\]
Hence
\[
r^{\frac{r}{2(r-2)}} \leq  e+ \sqrt{r} \text{ and then } r^{\frac{r}{2(r-2)}} \leq  C \sqrt{r}\, .
\]
Since $q \geq 12$, it follows that 
\[
A^{q/(q-2)}_q(m,n) \ll  ( 1 + M)^{6/5} \sqrt{q}  \sqrt{\frac{m}{n}} \, .
\]
On another hand, for $p \geq 2$, 
\[
A^{-\frac{q}{(q-2)(2p-1)}}_q(m,n) A^{6p/(2p-1)}_{6p}(m,n)  \leq (1+M)^4  p^2  \Big ( \frac{m}{n} \Big )^{3/2}  \Big ( \frac{n}{m} \Big )^{ 1/(2p -1) }  \, .
\]
Combining the estimates for $Q_1$, $Q_2$ and $Q_3$, we obtain the desired stated bounds.

\subsection{Proofs of the remaining intermediate results.}
\paragraph{Proof of Lemma \ref{ineconcentration1}.} 
We start by noticing that 
\[
\Vert {\mathbb X}  - {\mathbb X} ^{(m)} \Vert = \sup_{u,v \in {\mathbb S}^{n-1} } \Big |  \langle u , ( {\mathbb X}  - {\mathbb X}^{(m)}) v \rangle \Big |   \, .
\]
But, setting, for $i,j  \in \{1, \dots, n \}$, 
\[
a_{i,j} = u_i v_j + u_j v_i \text{ if } i\neq j \text{ and } a_{ii} = u_i v_i \, ,
\]
and 
$a_{i,j} =0$ otherwise, 
we have 
\[
 \Big |  \langle u , ( {\mathbb X}  - {\mathbb X}^{(m)}) v \rangle \Big | \leq  \frac{1}{ \sqrt{n}} \sum_{i=1}^n \sum_{j=1}^i  |X_{i,j}-X^{(m)}_{i,j}  \vert |a_{i,j} |  \leq  \frac{2}{ \sqrt{n}} \delta(m)  \sum_{i=1}^n |u_i|  \sum_{j=1}^n |v_j|  \, .
\]
Hence
\beq \label{comparisonXXm}
\Vert {\mathbb X}  - {\mathbb X} ^{(m)} \Vert    \leq  2 \sqrt{n} \delta (m) \, .
\eeq
Next, we proceed as in the proof of Lemma 7.2 in \cite{brailovskayaVanHandel}. Let us write
\[
 \langle u , {\mathbb X}^{(m)} v \rangle = n^{-1/2} \sum_{i=1}^n \sum_{j=1}^i X^{(m)}_{i,j} a_{i,j} .
 \]
 Setting ${\tilde X}^{(m)}_{i,j} =0$ if $i >n$ or if $j >i$, we start with the following decomposition 
\[
 \sum_{i=1}^n \sum_{j=1}^i X^{(m)}_{i,j} a_{i,j}  =   \sum_{\ell=1}^{[2^{-1} k_{n,m}] + 1} \sum_{j=1}^n \sum_{i = (2 \ell -1)m +1}^{2 \ell m}  {\tilde X}^{(m)}_{i,j} a_{i,j} +  \sum_{\ell=0}^{[2^{-1} k_{n,m}] }  \sum_{j=1}^n \sum_{i = 2 \ell m +1}^{(2 \ell +1)  m} {\tilde X}^{(m)}_{i,j} a_{i,j} \, ,
\]
where $k_{n,m} = [n/m] +1$.  Let $B_{\ell, j} = \sum_{i = ( \ell -1)m +1}^{ \ell m}  {\tilde X}^{m}_{i,j}a_{i,j} $. We then have
\[
 \sum_{i=1}^n \sum_{j=1}^i X^{(m)}_{i,j} a_{i,j}   =  \sum_{\ell=1}^{[2^{-1} k_{n,m}] + 1}  \sum_{j=1}^n  B_{2\ell, j} + \sum_{\ell=0}^{[2^{-1} k_{n,m}] }  \sum_{j=1}^n  B_{2\ell +1 , j}  \, .
\]
But the random variables $ ( B_{2\ell, j} ) _{\ell, j \geq 1}$ are independent as well as the random variables $ ( B_{2\ell +1, j} ) _{\ell \geq 0, j \geq 1}$. Note that 
\[
\Vert  B_{2\ell, j} \Vert_{\infty} \leq M  \sum_{i = ( 2\ell -1)m +1}^{ 2\ell m}  |a_{i,j} | \leq M \sqrt{m}  \sqrt{ \sum_{i = (2 \ell -1)m +1}^{ 2\ell m}  a^2_{i,j} }  \, .
\]
Hence, by using Hoeffding's inequality, we get that, for any $y  >0$ and any $u , v $ such that $\Vert u \Vert= \Vert v \Vert=1$, 
\[
\Prob \Big ( \Big |  n^{-1/2}  \sum_{\ell=1}^{[2^{-1} k_{n,m}] + 1}  \sum_{j=1}^n  B_{2\ell, j}    \Big | \geq   y  \Big )  \leq  2 \exp \left ( -  \frac{y^2 n }{ 8  m M^2 }  \right )  ,
\]
since 
\[
 \sum_{\ell=1}^{ k_{n,m}}  \sum_{j=1}^n    \sum_{i = ( \ell -1)m +1}^{ \ell m}  a^2_{i,j}  =  \sum_{i=1}^n \sum_{j=1}^n a^2_{i,j} \leq 4  \Vert u \Vert^2  \Vert v \Vert^2=4  \, .
\]
Hence, for any $ x>0$, 
\[
\Prob \Big ( \Big |  n^{-1/2}  \sum_{\ell=1}^{[2^{-1} k_{n,m}] + 1}  \sum_{j=1}^n  B_{2\ell, j}    \Big | \geq     \frac{ 2 \sqrt{2 m }M  }{ \sqrt n} \sqrt{ x}   \Big )  \leq  2 e^{-x}  \, .
\]

We can get a similar for the deviation probability of the odd blocks $ \sum_{\ell=0}^{[2^{-1} k_{n,m}] }  \sum_{j=1}^n  B_{2\ell +1 , j} $.  So, overall, for a universal constant $\alpha$ large enough, we get that 
\[
\Prob \Big ( \Vert   {\mathbb X}  \Vert  \geq   2 \sqrt{n} \delta (m) +   16 \sqrt{2  }   M{  \sqrt  \frac{m}{n}} \sqrt{\alpha n + t }    \Big )  \leq  e^{-t} \, .
\]
We end the proof as the one given in Lemma 7.2 in \cite{brailovskayaVanHandel}.

\paragraph{Proof of Lemma \ref{lma73}}
Using the identity $A^{-1} - B^{-1}= A^{-1}(B-A)B^{-1}$ together with the fact that $ \Vert  ( z \1 - A )^{-1 } \Vert \leq   \frac{1 }{ |\Im m (z)|} $ and inequality \eqref{comparisonXXm},  we have
\beq \label{comparisonResXXm}
  \Bigl  |   \Vert  ( z I_n - {\mathbb X} )^{-1 } \Vert  -   \Vert  ( z I_n - {\mathbb X}^{(m)} )^{-1 } \Vert  \Bigr |  \leq  \frac{1 }{ \Im m (z)^2} \Vert {\mathbb X}  -  {\mathbb X}^{(m)} \Vert
  \leq  2   \frac{\sqrt{n} \delta(m) }{ \Im m (z)^2} . 
\eeq
Hence, by Markov's inequality, 
\begin{multline*}
\Prob  \Bigl (   \Vert  ( z I_n - {\mathbb X} )^{-1 } \Vert  \geq  2   \frac{\sqrt{n} \delta(m) }{ \Im m (z)^2}  +   e  \E^{1/(2p)} \big (   \Vert  ( z I_n - {\mathbb X} ^{(m)} )^{-1 } \Vert^{2p} \big )  \Bigr ) \\
\leq  \Prob  \Bigl (   \Vert  ( z I_n - {\mathbb X}^{(m)} )^{-1 }  \Vert  \geq   e  \E^{1/(2p)} \big (   \Vert  ( z I_n - {\mathbb X} ^{(m)} )^{-1 } \Vert^{2p} \big )  \Bigr ) 
\leq e^{-2p} .
\end{multline*}
Next, we continue the proof as the one of Lemma 7.3  in \cite{brailovskayaVanHandel} by using our Proposition \ref{propcomparaisonmomentsp} instead of their Theorem 6.8.  

\paragraph{Proof of Theorem \ref{spectrumonedirection}}
First we choose $m = c^{-1} ( 2+\kappa) \log (n)$ with $\kappa>0$. The proof follows the lines of the proof of Proposition 7.4   in \cite{brailovskayaVanHandel} using Lemmas \ref{ineconcentration1} and  \ref{lma73} instead of their lemmas 7.2 and 7.3.

\subsection{Proof of Proposition \ref{prop:lower-bound-X}}

The proof is based on the same scheme as that of \cite[Proposition~7.7]{brailovskayaVanHandel}. The only essential modification is that Proposition~\ref{prop:X-from-Xm} replaces \cite[Proposition~5.5]{brailovskayaVanHandel}. We first derive a comparison between the resolvents of $\mathbb X$ and $\mathbb G$, then discretize the set $\operatorname{sp}(\mathbb G)+\ii\varepsilon$, and finally invoke \cite[Lemma~7.1]{brailovskayaVanHandel} to conclude.

\medskip
\noindent
\textbf{Resolvent comparison:}
Fix $z\in\Cbb$ with $\Im m(z)\in(0,1]$, and let $p\in\mathbb N^*$.
By Markov's inequality,
$$
\Pbb\Bigl( \|(zI_n-\mathbb G)^{-1} \| \ge e\,\E\,[\|(zI_n-\mathbb G)^{-1}\|^{2p}]^{\frac{1}{2p}}\Bigr)\le e^{-2p}.
$$
Using this inequality along with Proposition \ref{propcomparaisonmomentsp} yields
$$
\E\,[\|(zI_n-\mathbb G)^{-1}\|^{2p}]^{\frac{1}{2p}}
\le
n^{\frac{1}{2p}}\E\,[\|(zI_n-\mathbb X^{(m)})^{-1}\|^{2p}]^{\frac{1}{2p}}
+
n^{\frac{1}{2p}}U_{n,m}(p,z).
$$
with $U_{n,m}(p,z)$ defined by \eqref{defUmn}. Now, by inequality \eqref{comparisonResXXm}, 
$$
\E\,[\|(zI_n-\mathbb X^{(m)})^{-1}\|^{2p}]^{\frac{1}{2p}}
\le
\E\,[\|(zI_n-\mathbb X)^{-1}\|^{2p}]^{\frac{1}{2p}}
+
\frac{2\sqrt n\,\delta(m)}{\Im m(z)^2}.
$$
In the spirit of the proof of \cite[Lemma 7.5]{brailovskayaVanHandel}, we use \cite[Theorem 2.3]{BLM} in order to control $\E\,[\|(zI_n-\mathbb X)^{-1}\|^{2p}]^{\frac{1}{2p}}$ by the concentration estimate in Proposition \ref{prop:X-from-Xm}, there exists a universal constant $C_1 >0$ such that
$$
\E\,[\|(zI_n-\mathbb X)^{-1}\|^{2p}]^{\frac{1}{2p}}
\le
\E \|(zI_n-\mathbb X)^{-1}\|
+
C_1\xi_{n,m}(p,z),
$$
with $\xi_{n,m}(p,z)$ defined in Proposition \ref{prop:X-from-Xm}. Still in the spirit of \cite{brailovskayaVanHandel}, combining the previous bounds gives
$$
\E\,[\|(zI_n-\mathbb G)^{-1}\|^{2p}]^{\frac{1}{2p}}
\le
n^{1/2p}\E \|(zI_n-\mathbb X)^{-1}\|
+
C_2 n^{1/2p}\left(
\frac{\alpha_{1,n,m}(p)}{\Im m(z)^2}
+
\frac{\alpha_{2,n,m}(p)}{\Im m(z)^3}
+
\frac{\alpha_{3,n,m}(p)}{\Im m(z)^4}
\right),
$$
with $C_2 > 0$ a universal constant and $\alpha_{1,n,m},\alpha_{2,n,m},\alpha_{3,n,m}$ exactly as in the statement.
 A second application of Proposition \ref{prop:X-from-Xm} gives
$$
\Pbb\Bigl(
\E \|(zI_n-\mathbb X)^{-1}\|\ge \|(zI_n-\mathbb X)^{-1}\|+
C_3\xi_{n,m}(p,z)\Bigr)\le  4e^{-c p},
$$
for $C_3,c > 0$ universal constants. Note that $n^{\frac{1}{2p}},(n/m)^{\frac{1}{2p-1}} \leq e^{2}$ for $p\geq \log (n)$; thus combining the above estimates, we obtain
$$
\Pbb\Bigl(
\|(zI_n-\mathbb G)^{-1}\|\ge
C\left\{
\|(zI_n-\mathbb X)^{-1}\|
+
\frac{\alpha_{1,n,m}(p)}{\Im m(z)^2}
+
\frac{\alpha_{2,n,m}(p)}{\Im m(z)^3}
+
\frac{\alpha_{3,n,m}(p)}{\Im m(z)^4}
\right\}
\Bigr)
\le  e^{-2p} + 4e^{-cp},
$$
for every $p\ge \log n$. Finally, by setting $t = \lceil Lp\rceil$ for a sufficiently large constant $L$ we get 
$$
\Pbb\Bigl(
\|(zI_n-\mathbb G)^{-1}\|\ge
C\left\{
\|(zI_n-\mathbb X)^{-1}\|
+
\frac{\alpha_{1,n,m}(t)}{\Im m(z)^2}
+
\frac{\alpha_{2,n,m}(t)}{\Im m(z)^3}
+
\frac{\alpha_{3,n,m}(t)}{\Im m(z)^4}
\right\}
\Bigr)
\le  5e^{-t},
$$
for every $t\ge \log n$.\\

\medskip
\noindent
\textbf{Uniform resolvent inequality on $\operatorname{sp}(\mathbb G)+i\varepsilon$:} We now follow exactly the same discretization argument as in the proof of Proposition~7.7 of
\cite{brailovskayaVanHandel}. One first uses the Gaussian spectral localization lemma
(their Lemma~7.2) to replace $\operatorname{sp}(\mathbb G)$ by a deterministic set $\Omega_x$ with probability
$1-e^{-x}$, then chooses an $\varepsilon$-net $N_x\subset \Omega_x$, and finally uses
$$
A^{-1}-B^{-1}=A^{-1}(B-A)B^{-1}
$$
to pass from the grid to all points in $\operatorname{sp}(\mathbb G)+i\varepsilon$. This yields
$$
\Pbb\Bigl(
\|(zI_n-\mathbb G)^{-1}\|\le
C\left\{
\|(zI_n-\mathbb X)^{-1}\|
+
\frac{\alpha_{1,n,m}(t)}{\varepsilon^2}
+
\frac{\alpha_{2,n,m}(t)}{\varepsilon^3}
+
\frac{\alpha_{3,n,m}(t)}{\varepsilon^4}
\right\}
\ \text{for all }z\in \operatorname{sp}(\mathbb G)+i\varepsilon
\Bigr)
\ge 1-n e^{-t},
$$
for all $t\ge \log n$, after enlarging the constants if necessary. Note that this inequality also holds when $t<\log n$, since $1-ne^{-t} <0$ in this case.

\medskip
\noindent
\textbf{Application of the deterministic spectral lemma:} Applying Lemma~7.1 of \cite{brailovskayaVanHandel} with $A=\mathbb G$ and $B=\mathbb X$, we obtain for all $t \geq 0$
$$
\Pbb\Bigl(
\operatorname{sp}(\mathbb G)\subset \operatorname{sp}(\mathbb X)+C\,\varepsilon_{n,m}(t)\,[-1,1]
\Bigr)
\ge 1-n e^{-t},
$$
where
$$
\varepsilon_{n,m}(t)
=
\alpha_{1,n,m}(t)+\alpha_{2,n,m}(t)^{1/2}+\alpha_{3,n,m}(t)^{1/3}.
$$
This proves the proposition.

\paragraph{Proof of Theorem \ref{spectrumotherdirection}}
The theorem follows by applying Proposition \ref{prop:lower-bound-X}, selecting, once again,   $m = c^{-1} ( 2+\kappa) \log (n)$ with $\kappa>0$. 

\paragraph{Proof of Theorem \ref{thm:Hausdorff distance}}
The theorem follows by applying Theorems \ref{spectrumonedirection} and \ref{spectrumotherdirection} (see the proof of \cite[Theorem 2.6]{brailovskayaVanHandel} for more details). 

\appendix

\section{Proof of the covariance identity \eqref{identitycov}}
\label{appendix:covariance-identity}

Let us verify \eqref{identitycov} for a few couples of integers $(i,\ell) $ in $[\! [ 1, n ] \! ]^2$. We start by noticing that by construction 
\[
\Cov \Big (  \big [ {\bf Z}_n (j) \big ]_i ,  \big [ {\bf Z}_n (j) \big ]_{\ell}   \Big )  = 0 \text{ if } |i-\ell | >m \, .
\]
Next,  assume  that $(i, \ell)$ is in $[\! [ 3km+1, (3k+1)m ] \! ]^2$ for some non negative $k$.  We have 
\begin{multline*}
\Cov \Big (  \big [ {\bf Z}_n (j) \big ]_i ,  \big [ {\bf Z}_n (j) \big ]_{\ell}   \Big )  = \E \Big (  \big [ \overrightarrow{Z}_{k+1}^{(1)} (  \overrightarrow{{\bf \eta}_{3k} }(j)) \big ]_i   \big [ \overrightarrow{Z}_{k+1}^{(1)} 
(  \overrightarrow{{\bf \eta}_{3k} }(j)) \big ]_{\ell} \Big )
\\
= \E \left [  E_{ \overrightarrow{{\bf \eta}_{\bf 3}} = {\bf a }_{\bf 3} } \Big (  \big [ \overrightarrow{Z}_{k+1}^{(1)} (  \overrightarrow{{\bf \eta}_{3k} }(j)) \big ]_i   \big [ \overrightarrow{Z}_{k+1}^{(1)} (  \overrightarrow{{\bf \eta}_{3k} }(j)) \big ]_{\ell} \Big ) \right ] \\
=   \int  \E \Big [   \big [ \overrightarrow{G}^{(1)}_{k+1}   (    {\bf a}_{3k } (j)    )  \big ]_i  \big [ \overrightarrow{G}^{(1)}_{k+1}   (    {\bf a}_{3k } (j)    )  \big ]_\ell   \Big ]  dP_{\overrightarrow{{\bf \eta}_{\bf 3}} }  (  {\bf a }_{\bf 3}  )  \\
=  \int  \E \Big [  Y^{(3k+1)}_{i,j}   ( {\bf a}_{3k } (j)  )   Y^{(3k+1)}_{\ell,j}   ( {\bf a}_{3k } (j)  )   \Big ]  dP_{\overrightarrow{{\bf \eta}_{\bf 3}} }  (  {\bf a }_{\bf 3}  )  
=  \E  \big ( X^{(m)}_{i,j}  X^{(m)}_{\ell,j}   \big ) \, .
  \end{multline*}
Consider now the case where $  3km+1 \leq i \leq  (3k+1)m$ and   $(3k+1)m  + 1 \leq  \ell \leq  (3k+2)m$ for some non negative $k$.  We have 
\begin{multline*}
\Cov \Big (  \big [ {\bf Z}_n (j) \big ]_i ,  \big [ {\bf Z}_n (j) \big ]_{\ell}   \Big )  = \E \Big (  \big [ \overrightarrow{Z}_{k+1}^{(1)} (  \overrightarrow{{\bf \eta}_{3k} }(j))  \big ]_i   \big [  \overrightarrow{Z}_{k+1}^{(1)} (  \overrightarrow{{\bf \eta}_{3k} }(j)) +  \overrightarrow{Z}_{k+1}^{(2)} 
(  \overrightarrow{{\bf \eta}_{3(k+1)} }(j)) \big ]_{\ell} \Big ) \\
=  \E \Big (  \big [  \overrightarrow{Z}_{k+1}^{(1)} (  \overrightarrow{{\bf \eta}_{3k} }(j)) \big ]_i   \big [ \overrightarrow{Z}_{k+1}^{(1)} 
(  \overrightarrow{{\bf \eta}_{3k} }(j)) \big ]_{\ell} \Big ) \, ,
  \end{multline*}
where we have used the  independence between $ \overrightarrow{Z}_{k+1}^{(1)} (  \overrightarrow{{\bf \eta}_{3k} }(j)) $ and $  \overrightarrow{Z}_{k+1}^{(2)} (  \overrightarrow{{\bf \eta}_{3(k+1)} }(j)) $. Now, since 
 $  3km+1 \leq i \leq  (3k+1)m$ and   $(3k+1)m  + 1 \leq  \ell \leq  (3k+2)m$,  taking the conditional expectation with respect to $\overrightarrow{{\bf \eta}_{\bf 3}} = {\bf a }_{\bf 3}$, we get 
\begin{multline*}
 \E \Big (  \big [  \overrightarrow{Z}_{k+1}^{(1)} (  \overrightarrow{{\bf \eta}_{3k} }(j)) \big ]_i   \big [ \overrightarrow{Z}_{k+1}^{(1)} 
(  \overrightarrow{{\bf \eta}_{3k} }(j)) \big ]_{\ell} \Big )  =  \int  \E \Big [   \big [ \overrightarrow{G}^{(1)}_{k+1}   (    {\bf a}_{3k } (j)    )  \big ]_i  \big [ \overrightarrow{G}^{(1)}_{k+1}   (    {\bf a}_{3k } (j)    )  \big ]_\ell   \Big ]  dP_{\overrightarrow{{\bf \eta}_{\bf 3}} }  (  {\bf a }_{\bf 3}  )  \\
=  \int  \E \Big [  Y^{(3k+1)}_{i,j}   ( {\bf a}_{3k } (j)  )   \E \big  (  X_{\ell,j}^{(m)} |  \overrightarrow{{\bf \eta}_{3k+1} }(j)   \big)   \Big ]  dP_{\overrightarrow{{\bf \eta}_{\bf 3}} }  (  {\bf a }_{\bf 3}  )  
=  \E  \big ( X^{(m)}_{i,j}  X^{(m)}_{\ell,j}   \big ) \, ,
  \end{multline*}
where for the last equality we have used that since $Y^{(3k+1)}_{i,j}   ( {\bf a}_{3k } (j)  ) $ is a measurable function of $\overrightarrow{{\bf \eta}_{3k+1} }(j) $, 
\[
 \E \Big [  Y^{(3k+1)}_{i,j}   ( {\bf a}_{3k } (j)  )   \E \big  (  X_{\ell,j}^{(m)} |  \overrightarrow{{\bf \eta}_{3k+1} }(j)   \big)   \Big ]  =  \E \Big [  Y^{(3k+1)}_{i,j}   ( {\bf a}_{3k } (j)  )   X_{\ell,j}^{(m)}   \Big ] \, .
\]
Consider now the case where $  (3k+1)m+1 \leq i \leq  (3k+2)m$ and   $(3k+1)m  + 1 \leq  \ell \leq  (3k+2)m$ for some non negative $k$.  We have 
\begin{multline*}
\Cov \Big (  \big [ {\bf Z}_n (j) \big ]_i ,  \big [ {\bf Z}_n (j) \big ]_{\ell}   \Big )  = \E \Big (  \big [ \overrightarrow{Z}_{k+1}^{(1)} (  \overrightarrow{{\bf \eta}_{3k} }(j))  +   \overrightarrow{Z}_{k+1}^{(2)} 
(  \overrightarrow{{\bf \eta}_{3(k+1)} }(j)) \big ]_i   \big [  \overrightarrow{Z}_{k+1}^{(1)} (  \overrightarrow{{\bf \eta}_{3k} }(j)) +  \overrightarrow{Z}_{k+1}^{(2)} 
(  \overrightarrow{{\bf \eta}_{3(k+1)} }(j)) \big ]_{\ell} \Big ) \\
=  \E \Big (  \big [  \overrightarrow{Z}_{k+1}^{(1)} (  \overrightarrow{{\bf \eta}_{3k} }(j)) \big ]_i   \big [ \overrightarrow{Z}_{k+1}^{(1)} 
(  \overrightarrow{{\bf \eta}_{3k} }(j)) \big ]_{\ell} \Big ) +  \E \Big (  \big [  \overrightarrow{Z}_{k+1}^{(2)} (  \overrightarrow{{\bf \eta}_{3(k+1)} }(j)) \big ]_i   \big [ \overrightarrow{Z}_{k+1}^{(2)} 
(  \overrightarrow{{\bf \eta}_{3(k+1)} }(j)) \big ]_{\ell} \Big ) \, ,
  \end{multline*}
where we have used the  independence between $ \overrightarrow{Z}_{k+1}^{(1)} (  \overrightarrow{{\bf \eta}_{3k} }(j)) $ and $  \overrightarrow{Z}_{k+1}^{(2)} (  \overrightarrow{{\bf \eta}_{3(k+1)} }(j)) $. Hence, by definition, since $(i, \ell)$ is in $[\! [ (3k+1)m  + 1, (3k+2)m ] \! ]^2$, 
\begin{multline*}
\Cov \Big (  \big [ {\bf Z}_n (j) \big ]_i ,  \big [ {\bf Z}_n (j) \big ]_{\ell}   \Big )  = \E \Big (  \big [  \E \big  (  X_{i,j}^{(m)} |  \overrightarrow{{\bf \eta}_{3k+1} }(j)   \big)   \E \big  (  X_{\ell,j}^{(m)} |  \overrightarrow{{\bf \eta}_{3k+1} }(j)   \big)  \Big )  \\
+  \E \Big (  \big \{  X_{i,j}^{(m)} -  \E \big  (  X_{i,j}^{(m)} |  \overrightarrow{{\bf \eta}_{3k+1} }(j)   \big)   \big \}    \big \{  X_{\ell,j}^{(m)}   -  \E \big  (  X_{\ell,j}^{(m)} |  \overrightarrow{{\bf \eta}_{3k+1} }(j)   \big) \big \} \Big )  
=   \E  \big ( X^{(m)}_{i,j}  X^{(m)}_{\ell,j}   \big ) \, .
  \end{multline*}
Finally, we consider the case where $  (3k+1)m+1 \leq i \leq  (3k+2)m$ and   $(3k+2)m  + 1 \leq  \ell \leq  (3k+3)m$ for some non negative $k$.  We have 
\begin{multline*}
\Cov \Big (  \big [ {\bf Z}_n (j) \big ]_i ,  \big [ {\bf Z}_n (j) \big ]_{\ell}   \Big )  = \E \Big (  \big [ \overrightarrow{Z}_{k+1}^{(1)} (  \overrightarrow{{\bf \eta}_{3k} }(j))  +  \overrightarrow{Z}_{k+1}^{(2)} 
(  \overrightarrow{{\bf \eta}_{3(k+1)} }(j))  \big ]_i   \big [   \overrightarrow{Z}_{k+1}^{(2)} 
(  \overrightarrow{{\bf \eta}_{3(k+1)} }(j)) \big ]_{\ell} \Big ) \\
= \E \Big (  \big [ \overrightarrow{Z}_{k+1}^{(2)} 
(  \overrightarrow{{\bf \eta}_{3(k+1)} }(j))  \big ]_i   \big [   \overrightarrow{Z}_{k+1}^{(2)} 
(  \overrightarrow{{\bf \eta}_{3(k+1)} }(j)) \big ]_{\ell} \Big )  \, ,
  \end{multline*}
where we have used the  independence between $ \overrightarrow{Z}_{k+1}^{(1)} (  \overrightarrow{{\bf \eta}_{3k} }(j)) $ and $  \overrightarrow{Z}_{k+1}^{(2)} (  \overrightarrow{{\bf \eta}_{3(k+1)} }(j)) $. Now, since 
$  (3k+1)m+1 \leq i \leq  (3k+2)m$ and   $(3k+2)m  + 1 \leq  \ell \leq  (3k+3)m$,  taking the conditional expectation with respect to $\overrightarrow{{\bf \eta}_{\bf 3}} = {\bf a }_{\bf 3}$, we get 
\begin{multline*}
  \E \Big (  \big [ \overrightarrow{Z}_{k+1}^{(2)} 
(  \overrightarrow{{\bf \eta}_{3(k+1)} }(j))  \big ]_i   \big [   \overrightarrow{Z}_{k+1}^{(2)} 
(  \overrightarrow{{\bf \eta}_{3(k+1)} }(j)) \big ]_{\ell} \Big )   =  \int  \E \Big [   \big [ \overrightarrow{G^{(2)}_{k+1}}   (    {\bf a}_{3(k+1) } (j)    )  \big ]_i  \big [ \overrightarrow{G^{(2)}_{k+1}}   (    {\bf a}_{3(k+1) } (j)    )  \big ]_\ell   \Big ]  dP_{\overrightarrow{{\bf \eta}_{\bf 3}} }  (  {\bf a }_{\bf 3}  )  \\
=  \int  \E \Big [  \big \{    X_{i,j}^{(m)} -  \E \big  (  X_{i,j}^{(m)} |  \overrightarrow{{\bf \eta}_{3k+1} }(j)   \big)   \big \}  Y^{(3(k+1))}_{\ell,j}   ( {\bf a}_{3(k+1) } (j)  )   \Big ]  dP_{\overrightarrow{{\bf \eta}_{\bf 3}} }  (  {\bf a }_{\bf 3}  )  
=  \E  \big ( X^{(m)}_{i,j}  X^{(m)}_{\ell,j}   \big ) \, ,
  \end{multline*}
where for the last equality we have used that since $ Y^{(3(k+1))}_{\ell,j}   ( {\bf a}_{3(k+1) } (j)  ) $ is a measurable function of $\overrightarrow{{\bf \eta}_{3k+2} }(j) $,  it is non correlated with $ \E \big  (  X_{i,j}^{(m)} |  \overrightarrow{{\bf \eta}_{3k+1} }(j)   \big) $.  

The other cases can be handled similarly.

\section{Concentration of the resolvent}
\subsection{The results}
The proof of Proposition  \ref{prop:lower-bound-X} is mainly based of the following concentration inequality for the resolvent: 
\begin{proposition}\label{prop:X-from-Xm}
There exist universal positive constants $c,C>0$ such that for every $z\in \mathbb C$ with $ \Im m(z)>0$ and $x\ge 0$,
$$
\Pbb\Bigl(
\bigl|\bigl\|
(zI_n-\mathbb X_n)^{-1}\bigr\|-\E\bigl\|(zI_n-\mathbb X_n)^{-1}
\bigr\|\bigr|
\ge
\xi_{n,m}(x,z)
\Bigr)
\le 4e^{-c x},
$$
where
$$
\xi_{n,m}(x,z)
=
C\left[
\left(
\frac{M\sqrt m}{\Im m(z)^2 n^{1/4}}
+
\frac{M^2m}{\Im m(z)^3\sqrt n}
\right)\sqrt x
+
\left(
\frac{M}{\Im m(z)^2}\sqrt{\frac{m}{n}}
+
\frac{M^2m}{\Im m(z)^3 n}
\right)x
+
\frac{\sqrt n \delta(m)}{\Im m(z)^2}
\right].
$$
\end{proposition}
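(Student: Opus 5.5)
First reduce from $\mathbb X_n$ to the finite-memory matrix $\mathbb X_n^{(m)}$. By \eqref{comparisonXXm} and the resolvent identity (see \eqref{comparisonResXXm}), the map $\mathbb X\mapsto \|(zI_n-\mathbb X)^{-1}\|$ is $\Im m(z)^{-2}$-Lipschitz in operator norm. Hence both $\|(zI_n-\mathbb X_n)^{-1}\|$ and its expectation differ from their $\mathbb X_n^{(m)}$ counterparts by at most $2\sqrt n\,\delta(m)/\Im m(z)^2$. This accounts for the last term of $\xi_{n,m}$. It then suffices to prove the concentration inequality for $F:=\|(zI_n-\mathbb X_n^{(m)})^{-1}\|$ around its mean, with the remaining terms.

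For $\mathbb X_n^{(m)}$ I would use the block structure of Lemma~\ref{ineconcentration1}. Split the row indices into consecutive blocks of length $m$. Because of $m$-dependence, the innovation blocks $\overrightarrow{{\bf \eta}_{\ell}}(j)$ determine $F$, and $F$ is a function of the independent variables $W_{\ell,j}:=\big(\overrightarrow{{\bf \eta}_{\ell-1}}(j),\overrightarrow{{\bf \eta}_{\ell}}(j)\big)$ only through overlapping pairs. To get genuinely independent coordinates, view $F$ as a function of the independent family $(\overrightarrow{{\bf \eta}_{\ell}}(j))_{\ell,j}$; changing one block $\overrightarrow{{\bf \eta}_{\ell}}(j)$ changes the entries $X^{(m)}_{i,j}$ for $i$ in at most two consecutive $m$-blocks of column $j$. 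I would then apply a bounded-differences/Efron--Stein type inequality of Boucheron--Lugosi--Massart type (the same \cite[Theorem 2.3]{BLM} invoked later, or its exponential version in \cite{BLM}) that yields a Bernstein-type tail with a variance proxy $V$ and a uniform increment bound.

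For the increments, let $\Delta_{\ell,j}$ be the perturbation matrix. It has the same form as the block matrices in \eqref{boundonR}, so $\|\Delta_{\ell,j}\|\le CM\sqrt{m/n}$. A first-order resolvent expansion gives $|F-F'|\le \Im m(z)^{-2}\|\Delta_{\ell,j}\|$. This yields the uniform increment $CM\sqrt{m/n}/\Im m(z)^2$, which produces the linear-in-$x$ term $\frac{M}{\Im m(z)^2}\sqrt{m/n}\,x$.

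The variance proxy $V=\sum_{\ell,j}(F-F'_{\ell,j})_+^2$ is the main obstacle. The crude bound $nk_{n,m}\cdot M^2m/(n\Im m(z)^4)$ is of order $n/m$, which is far too large. Instead, I would expand $F-F'$ to first order, $\langle u,(zI-\mathbb X)^{-1}\Delta(zI-\mathbb X)^{-1}v\rangle$ with $u,v$ the top singular vectors, and use the sum-of-squares structure
\[
\sum_{\ell,j}|\langle a,\Delta_{\ell,j}b\rangle|^2\le \frac{CM^2m}{n}\,\|a\|^2\|b\|^2
\]
(Cauchy--Schwarz within blocks, as in the proof of Lemma~\ref{ineconcentration1}). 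This gives a main variance of order $M^2m/(n\Im m(z)^4)$. The second-order remainder, bounded by $\|\Delta\|^2/\Im m(z)^3$ summed crudely, contributes $M^2m/(\Im m(z)^3\sqrt n)$-type terms. A self-bounding argument, with $V\lesssim$ the main term plus the remainder, then leads to the stated mixture: the $\sqrt x$ terms $\frac{M\sqrt m}{\Im m(z)^2n^{1/4}}$ and $\frac{M^2m}{\Im m(z)^3\sqrt n}$, and the $x$ term $\frac{M^2 m}{\Im m(z)^3 n}$. The $n^{1/4}$ loss, rather than $n^{1/2}$, arises from handling the non-differentiability of the norm (top singular vector not unique) by a smoothing or square-root step, which I expect to be the delicate point.

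Finally, combine the two-sided Bernstein tails for $\pm(F-\E F)$, giving the factor $4$ and the constant $c$. Then add back the $\delta(m)$ approximation error from the first step to conclude.
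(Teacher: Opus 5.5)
Your reduction from $\mathbb X_n$ to $\mathbb X_n^{(m)}$ is the same as the paper's. For $\mathbb X_n^{(m)}$ you take a genuinely different route.

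The paper (Proposition~\ref{prop:conditional-concentration}) proceeds in two stages:
\begin{itemize}
\item It freezes the even innovation blocks. Conditionally, $\mathbb X^{(m)}$ is then an exact sum of independent matrices $\overset{\circ}{Y}_{r,j}(\mathbf a_{\mathbf 2})$, so \cite[Proposition~5.6]{brailovskayaVanHandel} applies as a black box; this handles $I_1$.
\item For the fluctuation of the conditional expectation ($I_2$), it reruns the moment-generating-function part of that proof for each fixed realization of the odd blocks, and averages via Jensen.
\end{itemize}
You instead apply an exponential Efron--Stein inequality once, directly to $F$ viewed as a function of all the independent blocks $\overrightarrow{{\bf \eta}_{\ell}}(j)$. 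You use only that resampling one block perturbs $\mathbb X^{(m)}$ by a matrix $\Delta_{\ell,j}$ supported on $2m$ rows of column $j$ and its transpose. This is legitimate, since the Boucheron--Lugosi--Massart inequalities need independent inputs, not a sum structure, and it saves the two-step conditioning. The price is that you must redo the computation of \cite[Proposition~5.6]{brailovskayaVanHandel} with $\Delta_{\ell,j}$ in place of $Z_i-Z_i^0$; the paper ends up doing this for $I_2$ anyway.

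\textbf{The gap: the second-order remainder.} As written, this step fails. Summed crudely, each of the $\asymp n^2/m$ coordinates contributes $(\|\Delta_{\ell,j}\|^2/\Im m(z)^3)^2\lesssim M^4m^2/(n^2\Im m(z)^6)$. This part of the variance proxy is then of order $M^4m/\Im m(z)^6$. That gives a $\sqrt x$-coefficient $M^2\sqrt m/\Im m(z)^3$, which does not decay in $n$, rather than $M^2m/(\Im m(z)^3\sqrt n)$.

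The fix uses the structure behind the second term of the variable $W$ in \cite[Proposition~5.6]{brailovskayaVanHandel}. Take $(v,w)$ a maximizing pair for $R=(zI_n-\mathbb X^{(m)})^{-1}$ and set $\tilde v=R^*v$. Bound the remainder by $\|\Delta_{\ell,j}\tilde v\|\,\|\Delta_{\ell,j}\|/\Im m(z)^2$, so that
\[
\sum_{\ell,j}\frac{\|\Delta_{\ell,j}\tilde v\|^2\,\|\Delta_{\ell,j}\|^2}{\Im m(z)^4}\le\frac{\max_{\ell,j}\|\Delta_{\ell,j}\|^2}{\Im m(z)^6}\,\Big\|\sum_{\ell,j}\Delta_{\ell,j}^2\Big\| .
\]
Then prove $\|\sum_{\ell,j}\Delta_{\ell,j}^2\|\le CM^2m$ deterministically. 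To do so, write $\Delta_{\ell,j}=n^{-1/2}(d\,e_j^T+e_jd^T)$ up to the diagonal entry, with $\|d\|^2\le 8M^2m$, and use that every row index lies in at most two blocks. This yields the coefficient $M^2m/(\sqrt n\,\Im m(z)^3)$. It corresponds to the term $R\,(\E\|\mathbb X_a^{(m)}-\E\mathbb X_a^{(m)}\|^2)^{1/2}/\Im m(z)^3$ in the paper.

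\textbf{Two smaller points.}

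First, the $n^{-1/4}$ term has nothing to do with non-differentiability of the norm. The bound $F-F'\le|\langle v,(R-R')w\rangle|$ holds for any maximizing pair, so no smoothing is needed. In the paper, that term is $R^{1/2}(\E\|\mathbb X_a^{(m)}-\E\mathbb X_a^{(m)}\|)^{1/2}/\Im m(z)^2$ from \cite[Proposition~5.6]{brailovskayaVanHandel}, which in general controls $\E\sup_{v,w}\sum|\langle v,(Z_i-Z_i^0)w\rangle|^2$. The paper evaluates it with the crude bound $\E\|\mathbb X_a^{(m)}-\E\mathbb X_a^{(m)}\|\le CM\sqrt m$. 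In your route the sum-of-squares bound holds for every realization, so that term never appears. You get a slightly stronger inequality, which is fine.

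Second, for the lower tail, do not try to bound $V_-$. It involves maximizing vectors of the perturbed resolvents, which change with $(\ell,j)$, so your sum-of-squares bound does not apply. Use instead the lower-tail form of the exponential Efron--Stein inequality driven by $V_+$, together with the uniform increment bound $|F-F'|\le CM\sqrt{m/n}/\Im m(z)^2$. This is what the lower-tail part of the argument in \cite{brailovskayaVanHandel} does.
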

As we shall see, this result is based on the following intermediate result involving $m$-dependent entries. 

\begin{proposition}\label{prop:conditional-concentration}
There exist a universal constant $c >0$ such that, for every $z\in \mathbb C$ with $\Im m(z)>0$ and $x\ge 0$,
$$
\Pbb\Bigl(
\bigl|\bigl\|(zI_n-\mathbb X_n^{(m)})^{-1}\bigr\|-\E\bigl\|(zI_n-\mathbb X_n^{(m)})^{-1}\bigr\|\bigr|
\ge \,\Theta_{n,m}(x,z)
\Bigr)
\le 4e^{-cx},
$$
where 
\beq \label{defthetaxz}
\Theta_{n,m}(x,z)
=
C\left[
\left(
\frac{M\sqrt m}{\Im m(z)^2 n^{1/4}}
+
\frac{M^2m}{\Im m(z)^3\sqrt n}
\right)\sqrt x
+
\left(
\frac{M}{\Im m(z)^2}\sqrt{\frac{m}{n}}
+
\frac{M^2m}{\Im m(z)^3 n}
\right)x
\right].
\eeq
\end{proposition}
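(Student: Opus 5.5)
The plan is to view $F:=\|(zI_n-\mathbb X_n^{(m)})^{-1}\|$ as a function of independent random variables and apply an exponential Efron--Stein / martingale Bernstein inequality; the natural reference is \cite[Theorem~6.16 / Theorem~2.3]{BLM}.

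\textbf{Choice of coordinates.} In each column $j$, group the innovations into the length-$m$ blocks $\overrightarrow{{\bf \eta}_k}(j)$ of Section~\ref{Subsection:Step1}. These blocks are independent over $(k,j)$, and $F$ is a measurable function of the finite family $(\overrightarrow{{\bf \eta}_k}(j))_{k,j}$. Since $X^{(m)}_{i,j}$ depends only on $(\varepsilon_{i-m,j},\dots,\varepsilon_{i,j})$, resampling one block $\overrightarrow{{\bf \eta}_k}(j)$ into an independent copy changes only the entries $X^{(m)}_{i,j}$ with $i\in[\![(k-1)m+1,(k+1)m]\!]$. Call the perturbed matrix $\mathbb X^{(m),(k,j)}$ and set $\Delta_{k,j}:=\mathbb X^{(m)}-\mathbb X^{(m),(k,j)}$.

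\textbf{Step 1: uniform bound on increments.} Exactly as in the derivation of \eqref{boundonR}, $\Delta_{k,j}$ is supported on one column and its symmetric row, on at most $2m$ positions, with entries bounded by $2M/\sqrt n$. Hence
\[
\|\Delta_{k,j}\|\le CM\sqrt{m/n}.
\]
Combining this with $\bigl|\|A^{-1}\|-\|B^{-1}\|\bigr|\le\|A^{-1}\|\,\|A-B\|\,\|B^{-1}\|$ gives
\[
|F-F^{(k,j)}|\le b:=\frac{CM}{\Im m(z)^2}\sqrt{\frac mn}.
\]
This constant produces the term $\frac{M}{\Im m(z)^2}\sqrt{m/n}\,x$ in $\Theta_{n,m}$.

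\textbf{Step 2: the variance proxy.} This is the key step and the one I expect to be the main obstacle. Let $u,v$ be unit vectors achieving $F=|\langle u,(z-\mathbb X^{(m)})^{-1}v\rangle|$, and put $a=(\bar z-\mathbb X^{(m)})^{-1}u$, $b'=(z-\mathbb X^{(m)})^{-1}v$, so $\|a\|,\|b'\|\le \Im m(z)^{-1}$. A second-order resolvent expansion gives
\[
(F-F^{(k,j)})_+\le |\langle a,\Delta_{k,j}b'\rangle|+\frac{\|\Delta_{k,j}\|^2}{\Im m(z)^3}.
\]
\begin{itemize}
\item \emph{Quadratic term.} The first-order quantity satisfies
\[
|\langle a,\Delta_{k,j}b'\rangle|\le \frac{2M}{\sqrt n}\Bigl(|a_j|\sum_{i\in I_k}|b'_i|+|b'_j|\sum_{i\in I_k}|a_i|\Bigr),
\]
with $|I_k|\le 2m$. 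Squaring, using Cauchy--Schwarz in $i$, and summing over $(k,j)$ (each $i$ lies in at most two windows) gives $V_+\lesssim M^2 m\,n^{-1}\,\Im m(z)^{-4}$ times a factor at most $n^{1/2}$. That factor comes from the crude bound $\sum_j|a_j|\sum_i|b'_i|^2$-type terms, interpolating $\ell^1$ and $\ell^2$ norms. The result is the variance $M^2m/(\Im m(z)^4\sqrt n)$, which yields the $\sqrt x\,M\sqrt m/(\Im m(z)^2n^{1/4})$ term. I would try first to get a sharper bound by exploiting that the $\Delta_{k,j}$ for different $j$ sit in different columns; the $n^{1/4}$ is what the crude route gives, and it suffices for the statement.
\item \emph{Cross term.} The second-order contributions sum, after Cauchy--Schwarz against the quadratic term and the bound $\|\Delta_{k,j}\|^2\le CM^2m/n$, to cross terms of order $\frac{M^2m}{\Im m(z)^3\sqrt n}$ in standard deviation and $\frac{M^2m}{\Im m(z)^3n}$ in the range. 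These are the remaining two terms of $\Theta_{n,m}$.
\end{itemize}

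\textbf{Step 3: conclusion.} I will plug the deterministic bound on $V_+$ (and on $V_-$, obtained symmetrically by exchanging the roles of $F$ and $F^{(k,j)}$) together with the increment bound $b$ into the exponential Efron--Stein inequality of \cite{BLM} for bounded-increment functions. This gives
\[
\Pbb(|F-\E F|\ge C(\sqrt{V x}+bx))\le 4e^{-cx},
\]
which is exactly $\Pbb(|F-\E F|\ge\Theta_{n,m}(x,z))\le 4e^{-cx}$. If $V_+$ cannot be made deterministic in this way, the fallback is a martingale Freedman inequality along the ordering of the blocks. In that route the quadratic-variation bound of Step 2 is used pathwise, since it only relies on $\|(z-\mathbb X^{(m)})^{-1}\|\le\Im m(z)^{-1}$, which holds for every realization.
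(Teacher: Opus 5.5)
Your route differs from the paper's. The paper alternately freezes the even and the odd length-$m$ blocks, so that conditionally $\mathbb X^{(m)}$ is a genuine sum of independent matrices of norm at most $CM\sqrt{m/n}$. It then applies \cite[Proposition~5.6]{brailovskayaVanHandel} to $F-\E_{\mathcal F_m}F$ and transfers the same mgf bound to $\E_{\mathcal F_m}F$ by Jensen. You instead apply the exponential Efron--Stein inequality directly, with the independent blocks $\overrightarrow{{\bf \eta}_k}(j)$ as coordinates. That plan is viable, but Step~2 fails at the second-order term, not the first-order one. With your remainder bound $\|\Delta_{k,j}\|^2/\Im m(z)^3$, the remainder contributes (up to a factor $2$) $\sum_{k,j}\|\Delta_{k,j}\|^4/\Im m(z)^6$ to $V_+$. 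There are about $n^2/m$ pairs $(k,j)$, and $\|\Delta_{k,j}\|^2\le CM^2m/n$ is sharp in general. So this sum is of order $M^4m/\Im m(z)^6$, a standard deviation $M^2\sqrt m/\Im m(z)^3$ that does not vanish (it grows like $\sqrt{\log n}$ for $m\asymp\log n$). This is a diagonal sum, not a cross term, and no Cauchy--Schwarz against the first-order part removes it.

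The missing idea is the one encoded in the variable $W$ of \cite[Proposition~5.6]{brailovskayaVanHandel}: bound only one factor $\Delta_{k,j}$ in operator norm and let the other act on the fixed vector $a$. With $\mathcal R=(zI_n-\mathbb X^{(m)})^{-1}$ and $\mathcal R'=(zI_n-\mathbb X^{(m),(k,j)})^{-1}$, one has $\mathcal R-\mathcal R'=\mathcal R\Delta_{k,j}\mathcal R'$ and $\mathcal R'=\mathcal R-\mathcal R\Delta_{k,j}\mathcal R'$. Hence
\[
(F-F^{(k,j)})_+\le|\langle a,\Delta_{k,j}b'\rangle|+\frac{\|\Delta_{k,j}\|\,\|\Delta_{k,j}a\|}{\Im m(z)^2},
\]
so after squaring and summing, the remainder contributes at most $\frac{CM^2m}{n\Im m(z)^4}\langle a,\sum_{k,j}\Delta_{k,j}^2\,a\rangle$. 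Each $\Delta_{k,j}$ is a row/column cross on at most $2m$ rows with entries bounded by $2M/\sqrt n$. A direct computation then gives $\sum_{k,j}\|\Delta_{k,j}w\|^2\le CM^2m\|w\|^2$, i.e.\ $\|\sum_{k,j}\Delta_{k,j}^2\|\le CM^2m$ deterministically. The remainder therefore contributes $CM^4m^2/(n\Im m(z)^6)$ to $V_+$, and this is exactly the source of the $\frac{M^2m}{\Im m(z)^3\sqrt n}\sqrt x$ term.

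Two smaller points. First, the first-order part needs no $n^{1/2}$ loss: your Cauchy--Schwarz already gives $\sup_{\|v\|=\|w\|=1}\sum_{k,j}|\langle v,\Delta_{k,j}w\rangle|^2\le CM^2m/n$. The $n^{-1/4}$ in the paper comes from the general term $R(\mathbb X_a^{(m)})^{1/2}(\E\|\mathbb X_a^{(m)}-\E\mathbb X_a^{(m)}\|)^{1/2}$ with the crude bound $CM\sqrt m$, not from any $\ell^1/\ell^2$ interpolation. Second, $V_-$ is not bounded by the ``symmetric'' argument, since expanding around the perturbed matrix makes the maximizing vectors depend on $(k,j)$. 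It is not needed, though: $V_+\le v$ together with the increment bound $b$ already yields the lower tail, as in the proof of \cite[Proposition~5.6]{brailovskayaVanHandel}.

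With these repairs your argument proves the statement, even with $n^{-1/2}$ in place of $n^{-1/4}$ in the first term, and it avoids the two-stage conditioning $I_1$, $I_2$. The paper's freezing instead buys the ability to quote \cite[Proposition~5.6]{brailovskayaVanHandel} verbatim for sums of independent matrices.
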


\subsection{Proofs}
To simplify the notations, we shall drop the index $n$ in the definitions of the matrices $\mathbb X_n^{(m)}$, $\mathbb X_n$ and $\mathbb G_n$.

\subsubsection{Proof of Proposition \ref{prop:conditional-concentration}}
The proof of this result is based on a blocking approach and by working conditionally to a part of the random variables, in the spirit of \cite{Jirak}, where this idea is used to prove a
Berry-Esseen bound for functions of iid random variables.\\
Set
$
q_{n,m}=\left\lfloor \frac{n}{2m}\right\rfloor+1$, then for $1\le r\le q_{n,m}$ and $1\le j\le n$, define the even blocks
$$
\vec{\eta}_{2r}(j)=
\bigl(\varepsilon_{(2r-1)m+1,j},\ldots,\varepsilon_{n\wedge 2rm,j}\bigr),
$$
and the odd blocks
$$
\vec{\nu}_{2r}(j)=
\bigl(\varepsilon_{(2r-2)m+1,j},\ldots,\varepsilon_{n\wedge (2r-1)m,j}\bigr).
$$
We also introduce the initial block
$$
\vec{\nu}_{0}(j)=(\varepsilon_{1-m,j},\ldots,\varepsilon_{0,j}).
$$
Let
$$
\vec \eta_{\mathbf{2}}=\bigl(\vec{\eta}_{2r}(j)\bigr)_{1\le r\le q_{n,m},\ 1\le j\le n},
\qquad
\vec{\nu}_{\mathbf{2}}=\bigl(\vec{\nu}_0(j),\vec{\nu}_{2r}(j)_{1\le r\le q_{n,m}}\bigr)_{1\le j\le n},
$$
and define
$$
\mathcal F_m=\sigma(\vec \eta_{\mathbf{2}}).
$$
The matrix $\mathbb X^{(m)}$ may be written as
$$
\mathbb X^{(m)}
=
\frac1{\sqrt n}
\sum_{r=1}^{q_{n,m}}
\sum_{i=(2r-2)m+1}^{\,n\wedge 2rm}
\sum_{j=1}^{i}
X_{i,j}^{(m)}E_{i,j}.
$$
Accordingly, the row indices are partitioned into consecutive double blocks
$$
[1,2m],\ [2m+1,4m],\ \ldots,\ [2(r-1)m+1,2rm],\ \ldots,
$$
each of which consists of two adjacent sub-blocks of length $m$,
$$
\underbrace{[(2r-2)m+1,(2r-1)m]}_{\text{left block}}
\qquad\cup\qquad
\underbrace{[(2r-1)m+1,2rm]}_{\text{right block}}.
$$
The key point is that, in the proof, we shall alternately freeze one family of blocks and keep the other random. More precisely, we shall condition either on the even blocks through a realization of
$\vec\eta_{\mathbf 2}$, or on the odd blocks through a realization of $\vec\nu_{\mathbf 2}$.
To formalize this procedure, let $\mathbb P_{\vec\eta_{\mathbf 2}}$ and
$\mathbb P_{\vec\nu_{\mathbf 2}}$ denote the laws of $\vec\eta_{\mathbf 2}$ and
$\vec\nu_{\mathbf 2}$, respectively. For $1-m\le u\le n$, $1\le j\le n$, and deterministic
realizations $\mathbf a_{\mathbf 2}$ of $\vec\eta_{\mathbf 2}$ and $\mathbf b_{\mathbf 2}$ of
$\vec\nu_{\mathbf 2}$, define
$$
\xi_{u,j}^{a,b}
=
\begin{cases}
a_{u,j}, & \text{if }u\in\{(2r-1)m+1,\ldots,n\wedge 2rm\}
          \text{ for some }1\le r\le q_{n,m},\\
b_{u,j}, & \text{otherwise.}
\end{cases}
$$
Recall that
\[
X^{(m)}_{i,j}
=
\E \Big( X_{i, j}\,\Big|\, \sigma ( \varepsilon_{i-k,j},  0 \leq k \leq m ) \Big)
:= g_m  (  \varepsilon_{i-m,j} , \dots, \varepsilon_{i,j}),
\]
for some measurable real-valued function $g_m$, we then set
$$
X_{i,j}^{(m)}(\mathbf a_{\mathbf 2},\mathbf b_{\mathbf 2})
=
g_m\bigl(\xi_{i-m,j}^{a,b},\ldots,\xi_{i,j}^{a,b}\bigr),
\qquad 1\le j\le i\le n.
$$
Equivalently, this means that
$$
X_{i,j}^{(m)}(\mathbf a_{\mathbf 2},\mathbf b_{\mathbf 2})
=
\begin{cases}
g_m\bigl(b_{i-m,j},\ldots,b_{0,j},b_{1,j},\ldots,b_{i,j}\bigr),
& 1\le i\le m,\\
g_m\bigl(b_{i-m,j},\ldots,b_{(2r-1)m,j},a_{(2r-1)m+1,j},\ldots,a_{i,j}\bigr),
& (2r-1)m+1\le i\le n\wedge 2rm,\\
g_m\bigl(a_{i-m,j},\ldots,a_{2rm,j},b_{2rm+1,j},\ldots,b_{i,j}\bigr),
& 2rm+1\le i\le n\wedge (2r+1)m.
\end{cases}
$$
Note that by construction, $\mathbb X^{(m)}
=
\mathbb X^{(m)}(\vec \eta_{\mathbf 2},\vec \nu_{\mathbf 2})$ almost surely. For every $t>0$, the triangle inequality gives
$$
\Pbb\Bigl(
\bigl|\bigl\|(zI_n-\mathbb X^{(m)})^{-1}\bigr\|-\E\bigl\|(zI_n-\mathbb X^{(m)})^{-1}\bigr\|\bigr|\ge 2t
\Bigr)
\le I_1(t)+I_2(t),
$$
where
$$
I_1(t)
=
\E\Bigl[
\Pbb_{\mathcal F_m}\Bigl(
\bigl|
\bigl\|(zI_n-\mathbb X^{(m)})^{-1}\bigr\|-\E_{\mathcal F_m}\bigl\|(zI_n-\mathbb X^{(m)})^{-1}\bigr\|
\bigr|
\ge t
\Bigr)
\Bigr]
$$
and
$$
I_2(t)
=
\Pbb\Bigl(
\bigl|
\E_{\mathcal F_m}\bigl\|(zI_n-\mathbb X^{(m)})^{-1}\bigr\|-\E\bigl\|(zI_n-\mathbb X^{(m)})^{-1}\bigr\|
\bigr|
\ge t
\Bigr).
$$
\medskip
\noindent\textbf{Estimate of $I_1$.}\\
Fix a realization $\mathbf{a_2}$ of $\vec {\eta}_{\mathbf 2}$, and set $\mathbb X_a^{(m)}=\mathbb X^{(m)}(\mathbf{a_2},\vec{\nu}_{\mathbf{2}})$.
For $1\le r\le q_{n,m}$ and $1\le j\le n$, define
$$
Y_{r,j}(\mathbf{a_2})
=
\frac1{\sqrt n}
\sum_{i=\max\{j,(2r-2)m+1\}}^{n\wedge 2rm}
X_{i,j}^{(m)}(\mathbf{a_2},\vec{\nu}_{\mathbf{2}})\,E_{i,j}
\quad \text{and}\quad
\overset{\circ}{Y}_{r,j}(\mathbf{a_2}) = Y_{r,j}(\mathbf{a_2}) - \E[Y_{r,j}(\mathbf{a_2})].
$$
Then
$$
\mathbb X_a^{(m)}=\E \mathbb  X_a^{(m)} + \sum_{r=1}^{q_{n,m}}\sum_{j=1}^{n}\overset{\circ}{Y}_{r,j}(\mathbf{a_2}).
$$
Note that for fixed $\mathbf{a_2}$, the family $\bigl(\overset{\circ}{Y}_{r,j}(\mathbf{a_2})\bigr)_{1\le r\le q_{n,m},\ 1\le j\le n}$
is independent. Therefore, one can apply \cite[Proposition 5.6]{brailovskayaVanHandel} to prove that there exists a universal constant  $c>0$ such that
\begin{eqnarray}\label{eq:I1}
&\mathbb{P}\Bigg(
\Big|\|(z\mathbf{1}-\mathbb  X_a^{(m)})^{-1}\|-\mathbb{E}\|(z\mathbf{1}- \mathbb X_a^{(m)})^{-1}\|\Big|
\geq
\frac{\tilde{\sigma}(X_a^{(m)})}{(\operatorname{Im} z)^2}\sqrt{x}
+\left\{
\frac{R( \mathbb X_a^{(m)})}{(\operatorname{Im} z)^2}
+\frac{R( \mathbb X_a^{(m)})^2}{(\operatorname{Im} z)^3}
\right\}x \\
&\quad\qquad \qquad+
\left\{
\frac{R(\mathbb X_a^{(m)})^{1/2}\big(\mathbb{E}\| \mathbb X_a^{(m)}-\mathbb{E} \mathbb X_a^{(m)}\|\big)^{1/2}}{(\operatorname{Im} z)^2}
+
\frac{R(\mathbb X_a^{(m)})\big(\mathbb{E}\|\mathbb X_a^{(m)}-\mathbb{E} \mathbb X_a^{(m)}\|^2\big)^{1/2}}{(\operatorname{Im} z)^3}
\right\}\sqrt{x}
\Bigg)
\leq 2e^{-cx}.\nonumber
\end{eqnarray}
for any $x \geq 0$ and any $z \in \mathbb C$ with $\Im m (z) > 0$  and
$$
R(\mathbb X_a^{(m)})
=
\left\|\max_{1\le r\le q_{n,m},\ 1\le j\le n}\|\overset{\circ}{Y}_{r,j}(\mathbf{a_2})\|\right\|_\infty
\quad \text{and}\quad
\tilde{\sigma}(\mathbb X_a^{(m)})=
\sup_{\|u\|_2=\|v\|_2=1}
\left(
\sum_{r=1}^{q_{n,m}}\sum_{j=1}^{n}
\E\bigl|\langle u,\overset{\circ}{Y}_{r,j}(\mathbf{a_2}) v\rangle\bigr|^2
\right)^{1/2}.
$$
We now turn to the estimation of the quantities appearing in \eqref{eq:I1}. The same argument used to establish \eqref{boundonR} yields
\begin{equation}\label{eq:R-cond-proof}
R(\mathbb X_a^{(m)})\le CM\sqrt{\frac{m}{n}}.
\end{equation}
Let's now obtain a bound on $\tilde{\sigma}(\mathbb X_a^{(m)})$. Let $u = (u_1,...,u_n),v = (v_1,...,v_n)\in\mathbb C^n$ be unit vectors and set
$$
\gamma_{i,j}(u,v)
=
\begin{cases}
\overline u_i v_j+\overline u_j v_i,& i>j\\
\overline u_j v_j,& i=j
\end{cases}
,
$$
for $1\le j\le i\le n$. Since $ |X_{i,j}^{(m)}(\mathbf{a_2},\vec{\nu}_{\mathbf{2}})|\le M$, we obtain for $r = 1,...,q_{n,m}$ and $j=1,...,n$
\begin{align}\label{sumrjY}
\bigl|
\langle u,\overset{\circ}{Y}_{r,j}(\mathbf{a_2})v\rangle
\bigr|^2
&\le
\frac1n
\left(
\sum_{i=\max\{j,(2r-2)m+1\}}^{n\wedge 2rm}
2M\,|\gamma_{i,j}(u,v)|
\right)^2
\le
\frac{16M^2m}{n}
\sum_{i=\max\{j,(2r-2)m+1\}}^{n\wedge 2rm}
|\gamma_{i,j}(u,v)|^2.
\end{align}
Note that $ \sum_{1\le j\le i\le n}|\gamma_{i,j}(u,v)|^2\le 2$, therefore summing over $r$ and $j$ gives
\begin{equation}\label{eq:sigma-cond-proof}
\tilde{\sigma}(\mathbb X_a^{(m)})\le CM\sqrt{\frac{m}{n}}.
\end{equation}
We give now upper bounds of $ \E\|\mathbb X_a^{(m)}-\E\mathbb X_a^{(m)}\|$ and $\Bigl(\E\|\mathbb X_a^{(m)}-\E\mathbb X_a^{(m)}\|^2\Bigr)^{1/2}$. Fix unit vectors $u,v\in\Cbb^n$, then
$$
\langle u,(\mathbb X_a^{(m)}-\E\mathbb X_a^{(m)})v\rangle
=
\sum_{r=1}^{q_{n,m}}\sum_{j=1}^{n}
\langle u,\overset{\circ}{Y}_{r,j}(\mathbf{a_2})v\rangle,
$$
which is a sum of independent centered random variables. Therefore 
Hoeffding's inequality together with the upper bound \eqref{sumrjY} yields
\begin{equation}\label{eq:Hoeffding-concentration}
\Pbb_a\Bigl(
|\langle u,(\mathbb X_a^{(m)}-\E\mathbb X_a^{(m)})v\rangle|\ge s
\Bigr)
\le
2\exp\!\left(-\frac{cns^2}{M^2m}\right)
\end{equation}
for a universal constant $c>0$. Let $\mathcal N$ be a $1/4$-net of the unit sphere of $\Cbb^n$ such that
$
|\mathcal N|\le C^n 
$ (with $C \geq 1$). 
Using \eqref{eq:Hoeffding-concentration} along with a standard net argument (see \cite[p.130]{tao2023topics}), one gets 
$$
\Pbb(\|\mathbb X_a^{(m)}-\E\mathbb X_a^{(m)}\|\geq 2s)
\le
\Pbb(\max_{u,v\in\mathcal N}
|\langle u,(\mathbb X_a^{(m)}-\E\mathbb X_a^{(m)})v\rangle| \geq \frac{s}{2})\le
2C^{2n}\exp\!\left(-\frac{cns^2}{M^2m}\right).
$$
Choose $s_0=2M\sqrt{\frac{\log C}{c}}\,\sqrt m$,  then for $s\ge s_0$
$$
\Pbb_a\Bigl(
\|\mathbb X_a^{(m)}-\E\mathbb X_a^{(m)}\|\ge 2s
\Bigr)
\le
2\exp\!\left(-\frac{cns^2}{2M^2m}\right).
$$
It follows that 
\[
\E\|\mathbb X_a^{(m)}-\E\mathbb X_a^{(m)}\|^2
=
2\int_0^\infty y\,
\Pbb_a\bigl(\|\mathbb X_a^{(m)}-\E\mathbb X_a^{(m)}\|\ge y\bigr)\,dy,
\le
4s_0^2
+
4\int_{2s_0}^{\infty}
y\exp\!\left(-\frac{cny^2}{8M^2m}\right)\,dy
\le CM^2m.
\]
Then we can get the following estimates from the Cauchy-Schwarz inequality 
\begin{equation}\label{eq:moment-cond-proof}
\E\left\|
\mathbb X_a^{(m)}-\E\mathbb X_a^{(m)}
\right\|
\leq
\left(
\E\left\|
\mathbb X_a^{(m)}-\E\mathbb X_a^{(m)}
\right\|^2
\right)^{1/2}
\leq
CM\sqrt m.
\end{equation}
Applying Proposition~5.6 of \cite{brailovskayaVanHandel} to $\mathbb X_a^{(m)}$, and using
\eqref{eq:I1}, \eqref{eq:R-cond-proof}, \eqref{eq:sigma-cond-proof} and \eqref{eq:moment-cond-proof}, we obtain
$$
\Pbb_a\Bigl(
\bigl|
\|(zI_n-\mathbb X_a^{(m)})^{-1}\|
-
\E\|(zI_n-\mathbb X_a^{(m)})^{-1}\|
\bigr|
\ge
\frac12\Theta_{n,m}(x,z)
\Bigr)
\le 2e^{-cx},
$$
where
$
\Theta_{n,m}(x,z)
$ is given by \eqref{defthetaxz}. 
Integrating with respect to $\mathbb P_{\vec {\eta}_{\mathbf 2}}$, we obtain
\begin{equation}\label{eq:I1-final-clean}
I_1\bigl(\frac12\Theta_{n,m}(x,z)\bigr)\le 2e^{-cx}.
\end{equation}

\medskip
\noindent\textbf{Estimate of $I_2$.}
For every deterministic realization $\mathbf{b_2}$ of $\vec{\nu}_{\mathbf{2}}$, define
$
g_b(\mathbf{a_2})=\bigl\|(zI_n-\mathbb X^{(m)}(\mathbf{a_2},\mathbf{b_2}))^{-1}\bigr\|$, for $a\in \operatorname{supp}(\mathbb P_{ \vec {\eta}_{\mathbf 2}}).
$
Evaluating at the random even-block family $\vec {\eta}_{\mathbf 2}$, we obtain
$$
g_b(\vec {\eta}_{\mathbf 2})=\bigl\|(zI_n-\mathbb X^{(m)}(\vec {\eta}_{\mathbf 2},\mathbf{b_2}))^{-1}\bigr\|.
$$
We set $\Phi(\vec {\eta}_{\mathbf 2})=\int g_b(\vec {\eta}_{\mathbf 2})\,\mathbb dP_{\vec{\nu}_{\mathbf{2}}}(\mathbf{b_2})$. Since $ \mathbb X^{(m)}=\mathbb X^{(m)}(\vec {\eta}_{\mathbf 2},\vec{\nu}_{\mathbf{2}})$ almost surely, we have
$$
\Phi(\vec {\eta}_{\mathbf 2})=\E_{\mathcal F_m}\bigl\|(zI_n-\mathbb X^{(m)})^{-1}\bigr\|
\qquad\text{almost surely.}
$$
Therefore
$$
I_2(t)
=
\Pbb\Bigl(
\bigl|\Phi(\vec {\eta}_{\mathbf 2})-\E[\Phi(\vec {\eta}_{\mathbf 2})]\bigr|
\ge t
\Bigr).
$$
Fix $\mathbf{b_2}$ and for $0\le r\le q_{n,m}$,
$$
Z_{r,j}(\mathbf{b_2})
=
\frac1{\sqrt n}
\sum_{i=\max\{j,(2r-1)m+1\}}^{n\wedge(2r+1)m}
X_{i,j}^{(m)}(\vec {\eta}_{\mathbf 2},\mathbf{b_2})\,E_{i,j},
\qquad 1\le j\le n.
$$
Then
$$
\mathbb X^{(m)}(\vec {\eta}_{\mathbf 2},\mathbf{b_2})
=
\sum_{r=0}^{q_{n,m}}\sum_{j=1}^{n} Z_{r,j}(\mathbf{b_2}),
$$
and, for fixed $\mathbf{b_2}$, the family  of random matrices 
$
\bigl(Z_{r,j}(\mathbf{b_2})\bigr)_{0\le r\le q_{n,m},\ 1\le j\le n}
$
is independent. For $0\leq r \leq q_{n,m}$ denote $\overset{\circ}{Z}_{r,j}(\mathbf{b_2}) = Z_{r,j}(\mathbf{b_2})-\mathbb E Z_{r,j}(\mathbf{b_2})$, then the same argument used to establish \eqref{eq:R-cond-proof}, \eqref{eq:sigma-cond-proof} and \eqref{eq:moment-cond-proof} yield
$$
R(\mathbb X^{(m)}(\vec {\eta}_{\mathbf 2},\mathbf{b_2}))=\left\|\max_{0\le r\le q_{n,m},\ 1\le j\le n}\|\overset{\circ}{Z}_{r,j}(\mathbf{b_2})\|\right\|_\infty
\le CM\sqrt{\frac{m}{n}},
$$
$$
\tilde\sigma\bigl(\mathbb X^{(m)}(\vec {\eta}_{\mathbf 2},\mathbf{b_2})\bigr) = \sup_{\|u\|_2=\|v\|_2=1}
\left(
\sum_{r=0}^{q_{n,m}}\sum_{j=1}^{n}
\E\bigl|\langle u,\overset{\circ}{Z}_{r,j}(\mathbf{a_2}) v\rangle\bigr|^2
\right)^{1/2}
\le CM\sqrt{\frac{m}{n}},
$$
and
$$
\E\bigl\|
\mathbb X^{(m)}(\vec {\eta}_{\mathbf 2},\mathbf{b_2})-\E\mathbb X^{(m)}(\vec {\eta}_{\mathbf 2},\mathbf{b_2})
\bigr\|
 \leq 
\Bigl(
\E\bigl\|
\mathbb X^{(m)}(\vec {\eta}_{\mathbf 2},\mathbf{b_2})-\E\mathbb X^{(m)}(\vec {\eta}_{\mathbf 2},\mathbf{b_2})
\bigr\|^2
\Bigr)^{1/2}
\le CM\sqrt m,
$$
uniformly in $\mathbf{b_2}$.\\
In the following, we use the arguments from the proof of \cite[Proposition 5.6]{brailovskayaVanHandel}, in order to obtain
\begin{equation}\label{eq:bound-5.6}
    \E\exp\Bigl(
\lambda s\bigl(g_b(\vec {\eta}_{\mathbf 2})-\E[g_b(\vec {\eta}_{\mathbf 2})]\bigr)
\Bigr)
\le
\exp\!\left(
\frac{C\lambda^2A_{n,m}(z)^2}{1-\lambda B_{n,m}(z)}
\right),
\end{equation}
for every $0\le \lambda<1/(C\,B_{n,m}(z))$, $s\in \{ -1,1\}$, where
$$
A_{n,m}(z)
=
\frac{M\sqrt m}{\Im m(z)^2 n^{1/4}}
+
\frac{M^2m}{\Im m(z)^3\sqrt n}
\text{ and }
B_{n,m}(z)
=
\frac{M}{\Im m(z)^2}\sqrt{\frac{m}{n}}
+
\frac{M^2m}{\Im m(z)^3 n}
.
$$
Indeed, let $Z_{r,j}^0(\mathbf{b_2})$ be an independent copy of $Z_{r,j}(\mathbf{b_2})$ for $1\leq r\leq q, 1\leq j\leq n$. One can introduce the following random variable
\begin{eqnarray*}
&W
=
\frac{2}{\Im m(z)^4}
\sup_{\|v\|_2=\|w\|_2=1}
\sum_{r=1}^{q_{n,m}}\sum_{j=1}^{n}
\bigl|
\langle v,(Z_{r,j}(\mathbf{b_2})-Z_{r,j}^0(\mathbf{b_2}))w\rangle
\bigr|^2\\
&+
\frac{8R\bigl(\mathbb X^{(m)}(\Im m(z),b)\bigr)^2}{\Im m(z)^6}
\left\|
\sum_{r=1}^{q_{n,m}}\sum_{j=1}^{n}(Z_{r,j}(\mathbf{b_2})-Z_{r,j}^0(\mathbf{b_2}))^2
\right\|,
\end{eqnarray*}
together with
$$
a=\frac{16R\bigl(\mathbb X^{(m)}(\vec {\eta}_{\mathbf 2},\mathbf{b_2})\bigr)^2}{\Im m(z)^4}+\frac{64R\bigl(\mathbb X^{(m)}(\vec {\eta}_{\mathbf 2},\mathbf{b_2})\bigr)^4}{\Im m(z)^6}.
$$
The self-bounding estimate in the proof of \cite[Proposition 5.6]{brailovskayaVanHandel} gives
\begin{equation}\label{eq:mgfW-selfbounded}
\log \E e^{W/a}\le \frac{2}{a}\E W.
\end{equation}
The exponential Poincar\'e step used there gives, for $0\le\lambda<a^{-1/2}$,
$$
\log \E e^{\lambda (g_b(\vec {\eta}_{\mathbf 2})-\E[g_b(\vec {\eta}_{\mathbf 2})])}
\le
\frac{\lambda^2a}{1-\lambda^2a}\log\E e^{W/a}.
$$
Combining this upper bound with inequality \eqref{eq:mgfW-selfbounded} and using
$1-\lambda^2a\ge 1-\lambda\sqrt a$ yields
\begin{equation}\label{eq:positive-mgf}
\log \E e^{\lambda (g_b(\vec {\eta}_{\mathbf 2})-\E[g_b(\vec {\eta}_{\mathbf 2})])}
\le
\frac{2\lambda^2\E W}{1-\lambda\sqrt a}.
\end{equation}
The lower-tail part of the same proof gives, for $0\le\lambda<(2a)^{-1/2}$,
\begin{equation}\label{eq:negative-mgf}
\log \E e^{-\lambda (g_b(\vec {\eta}_{\mathbf 2})-\E[g_b(\vec {\eta}_{\mathbf 2})])}
\le
\frac{4\lambda^2\E W}{1-\lambda\sqrt{2a}}.
\end{equation}
But the following estimate from \cite[Lemma 5.8]{brailovskayaVanHandel} holds:
\begin{eqnarray*}
\E W
&\le
C\left(
\frac{\sigma_*\bigl(\mathbb X^{(m)}(\vec {\eta}_{\mathbf 2},\mathbf{b_2})\bigr)^2}{\Im m(z)^4}
+
\frac{R\bigl(\mathbb X^{(m)}(\vec {\eta}_{\mathbf 2},\mathbf{b_2})\bigr)\E\|\mathbb X^{(m)}(\vec {\eta}_{\mathbf 2},\mathbf{b_2})-\E  \mathbb X^{(m)}(\vec {\eta}_{\mathbf 2},\mathbf{b_2})\|}{\Im m(z)^4}\right.\\
&+
\left.\frac{R\bigl(\mathbb X^{(m)}(\vec {\eta}_{\mathbf 2},\mathbf{b_2})\bigr)^2\E\|\mathbb X^{(m)}(\vec {\eta}_{\mathbf 2},\mathbf{b_2})-\E  \mathbb X^{(m)}(\vec {\eta}_{\mathbf 2},\mathbf{b_2})\|^2}{\Im m(z)^6}
\right).
\end{eqnarray*}
Note that $(u+v+w)^2\ge u^2+v^2+w^2$ for any $u,v,w \geq 0$, then applying this inequality with 
$$
u=\frac{\sigma_*(X^{(m)}(\vec {\eta}_{\mathbf 2},\mathbf{b_2}))}{\Im m(z)^2},\quad
v=\frac{R(X^{(m)}(\vec {\eta}_{\mathbf 2},\mathbf{b_2}))^{1/2}\bigl(\E\|X^{(m)}(\vec {\eta}_{\mathbf 2},\mathbf{b_2})-\E X^{(m)}(\vec {\eta}_{\mathbf 2},\mathbf{b_2})\|\bigr)^{1/2}}{\Im m(z)^2},
$$
$$
w=\frac{R(X^{(m)}(\vec {\eta}_{\mathbf 2},\mathbf{b_2}))\bigl(\E\|X^{(m)}(\vec {\eta}_{\mathbf 2},\mathbf{b_2})-\E X^{(m)}(\vec {\eta}_{\mathbf 2},\mathbf{b_2})\|^2\bigr)^{1/2}}{\Im m(z)^3},
$$
yields
$$
\E W\le C\,A^2_{n,m}(z) .
$$
On the other hand,
$$
\sqrt a
\le
\frac{4R(X^{(m)}(\vec {\eta}_{\mathbf 2},\mathbf{b_2}))}{\Im m(z)^2}+\frac{8R(X^{(m)}(\vec {\eta}_{\mathbf 2},\mathbf{b_2}))^2}{\Im m(z)^3}
\le C\,B_{n,m}(z).
$$

Substitution in \eqref{eq:positive-mgf} and \eqref{eq:negative-mgf} and increasing the universal constant
$C$ if necessary, yields \eqref{eq:bound-5.6}. Since
$
\Phi(\vec {\eta}_{\mathbf 2})=\int g_b(\vec {\eta}_{\mathbf 2})\,\mathbb P_{\vec{\nu}_{\mathbf{2}}}(db),
$
Funini's theorem and Jensen's inequality gives for $s\in \{ -1,1\}$
$$
\exp\Bigl(
\lambda s\bigl(\Phi(\vec {\eta}_{\mathbf 2})-\E[\Phi(\vec {\eta}_{\mathbf 2})]\bigr)
\Bigr)
\le
\int
\exp\Bigl(
\lambda s\bigl(g_b(\vec {\eta}_{\mathbf 2})-\E[g_b(\vec {\eta}_{\mathbf 2})]\bigr)
\Bigr)\,\mathbb P_{\vec{\nu}_{\mathbf{2}}}(db).
$$
Taking $\E$ on both sides and using Fubini's theorem, we obtain
$$
\E\exp\Bigl(
\lambda s\bigl(\Phi(\vec {\eta}_{\mathbf 2})-\E[\Phi(\vec {\eta}_{\mathbf 2})]\bigr)
\Bigr)
\le
\exp\!\left(
\frac{C\lambda^2A_{n,m}(z)^2}{1-\lambda B_{n,m}(z)}
\right).
$$
By Chernoff's inequality and optimization on $\lambda$,
$$
I_2\Bigl(
C\bigl(A_{n,m}(z)\sqrt x+B_{n,m}(z)x\bigr)
\Bigr)
\le 2e^{-cx},
\qquad x\ge 0.
$$
Hence, by the definition of $\Theta_{n,m}(x,z)$, 
we conclude that
\begin{equation}\label{eq:I2-final-clean}
    I_2\bigl(\frac12\Theta_{n,m}(x,z)\bigr)\le 2e^{-cx},
\qquad x\ge 0.
\end{equation}

\medskip
\noindent\textbf{Final step.}
Combining \eqref{eq:I1-final-clean} and \eqref{eq:I2-final-clean}, we obtain
$$
\Pbb\Bigl(
\bigl|\bigl\|(zI_n-\mathbb X^{(m)})^{-1}\bigr\|-\E\bigl\|(zI_n-\mathbb X^{(m)})^{-1}\bigr\|\bigr|
\ge 2\,\Theta_{n,m}(x,z)
\Bigr)
\le 4e^{-cx},
$$
which proves the proposition.

\subsubsection{Proof of Proposition \ref{prop:X-from-Xm}}
Set
$$
\mathcal R(z)=\bigl\|(zI_n-\mathbb X)^{-1}\bigr\|,
\qquad
\mathcal R_m(z)=\bigl\|(zI_n-\mathbb X^{(m)})^{-1}\bigr\|.
$$
By the triangle inequality and inequalities \eqref{comparisonXXm} and \eqref{comparisonResXXm}, we derive 
\begin{align*}
|\mathcal R(z)-\E\mathcal R(z)|
&\le
|\mathcal R_m(z)-\E\mathcal R_m(z)|
+
|\mathcal R(z)-\mathcal R_m(z)|
+
|\E\mathcal R_m(z)-\E\mathcal R(z)| \\
& \leq  |\mathcal R_m(z)-\E\mathcal R_m(z)|
+
\frac{4\sqrt n \delta(m)}{\Im m(z)^2}.
\end{align*}
Therefore
$$
\Bigl\{
|\mathcal R(z)-\E\mathcal R(z)|
\ge
\Theta_{n,m}(x,z)+\frac{4\sqrt n \delta(m)}{\Im m(z)^2}
\Bigr\}
\subset
\Bigl\{
|\mathcal R_m(z)-\E\mathcal R_m(z)|
\ge
\Theta_{n,m}(x,z)
\Bigr\}.
$$
Taking probabilities and using the concentration inequality given in Proposition \ref{prop:conditional-concentration}  for $\mathbb X^{(m)}$, Proposition  \ref{prop:X-from-Xm} follows. 

 \subsection{Proof of Lemma \ref{lem:Gaussian-parameter-bounds}}\label{Appendix:free}

\begin{proof}[Proof of Lemma \ref{lem:Gaussian-parameter-bounds}]
In order to prove this lemma, we first need to bound $\|\Gamma_n\|$. For $h\in\mathbb Z$, let $\gamma (|h|) = {\rm Cov} (X_{0,0}, X_{h,0} )$. Since  $\gamma(-h)=\gamma(h)$ for $h\in\mathbb Z$, it follows that 
\begin{equation}
\label{eq:Gamma-Schur-bound}
\|\Gamma_n\|
\leq
\sup_{1\leq i\leq n}
\sum_{k=1}^{n}|\gamma(|i-k|)|
\leq
\sum_{h\in\mathbb Z}|\gamma(h)|
\leq
2\sigma_{\ast}^2.
\end{equation}
\textbf{Bound on } $\tilde{\sigma}_n$:\\
The columns of $\mathbb X_n$ are independent and the matrices $\mathbb X_n$ and $\mathbb G_n$ share the same covariance structure, then for $1\leq j\leq i\leq n$ and $1\leq \ell \leq k\leq n$, we have
\begin{equation}\label{eq:Gaussian-Cov}
\mathbb{E}[g_{i,j}g_{k,l}] = \begin{cases}
    (\Gamma_n)_{i,k} \text{ if } j=  \ell\\
    0 \text{ if } j\neq \ell
\end{cases}
.
\end{equation}
For unit vectors $u$ and $v$ Equations \eqref{defofGn} and \eqref{eq:Gaussian-Cov} give
$$
\mathbb E\left[
\left|\langle u,\mathbb G_n v\rangle\right|^2
\right]
=
\frac{1}{n}\sum_{j=1}^{n}\sum_{i,k=j}^{n} \overline{\langle u,E_{i,j}v\rangle} (\Gamma_{n})_{i,k}\langle u,E_{k,j}v\rangle
 = \frac{1}{n}\sum_{\ell=1}^{n}(\mathbf{c}^{(\ell)})^T \Gamma_n \mathbf{c}^{(\ell)},
$$
with $\mathbf{c}^{(j)} = (0,\ldots,0,\langle u,E_{j,j}v\rangle,\ldots,\langle u,E_{n,j}v\rangle)^T$, for $1\leq j\leq n$. 
Therefore, 
\begin{eqnarray}\label{eq:Gamma-quadratic-bound}
\mathbb E\left[
\left|\langle u,\mathbb G_n v\rangle\right|^2
\right] 
\leq \frac{\|\Gamma_n\|}{n}\sum_{\ell=1}^{n}\sum_{i=\ell}^n |\langle u,E_{i,\ell}v\rangle|^2 \leq \frac{2\|\Gamma_n\|\|u\|^2\|v\|^2}{n}.
\end{eqnarray}
Since the vectors $u$ and $v$ are unit vectors, we conclude taking the supremum on $u,v$ and combining \eqref{eq:Gamma-Schur-bound} and \eqref{eq:Gamma-quadratic-bound}.\\
\smallskip
\\
\textbf{Bound on } $\sigma_n$ :\\
Denote $E_{ij}v = (v_{i,1}^{(j)},...,v_{i,n}^{(j)})$ and
$\mathbf v_r^{(j)}
:=
(0,\ldots,0,v_{j,r}^{(j)},v_{j+1,r}^{(j)},\ldots,v_{n,r}^{(j)})^{\mathsf T}$ for $1\leq i,j,r\leq n$ with $j\leq i$. Then for any unit vector $v$ equations \eqref{defofGn} and \eqref{eq:Gaussian-Cov} yield
\begin{eqnarray*}
\left\langle
v,\mathbb E[\mathbb G_n^2]v
\right\rangle
= \frac{1}{n}\sum_{j=1}^{n}\sum_{i,k=j}^{n}(\Gamma_n)_{i,k}\langle
E_{i,j}v, E_{k,j}v\rangle =
\frac{1}{n}\sum_{j=1}^{n}\sum_{r=1}^n (\mathbf v_r^{(j)})^T\Gamma_n \mathbf v_r^{(j)}.
\end{eqnarray*}
Then
\begin{align*}
\left\langle
v,\mathbb E[\mathbb G_n^2]v
\right\rangle
 \leq \frac{\|\Gamma_n\|}{n}\sum_{\ell=1}^{n}\sum_{r=1}^n \|\mathbf v_r^{(\ell)}\|^2
\leq \frac{\|\Gamma_n\|}{n}
\sum_{1\leq \ell\leq i\leq n}
\|E_{i,\ell}v\|^2.
\end{align*}
A direct computation gives $\sum_{1\leq \ell\leq i\leq n}\|E_{i,\ell}v\|^2=n\|v\|^2$. It follows that
$$
\sigma_n^2
=
\left\|\mathbb E[\mathbb G_n^2]\right\|
\leq
\|\Gamma_n\|,
$$
and we conclude using \eqref{eq:Gamma-Schur-bound}.\\
\smallskip
\\
\textbf{Bound on } $v_n$:\\
Using the identity satisfied by $v_n$ given in \cite[\textsection 1.3.1]{BandeiraBoedihardjoVanHandel} and combining it with the representation \eqref{eq:Gaussian-matrix-representation}, we obtain
$$
v_n^2
=
\sup_{\mathrm{Tr}[T^T T]=1}
\sum_{k,\ell=1}^{n}
\left|
\operatorname{Tr}(A_{(k,\ell),n} T)
\right|^2 = \frac{1}{n}\sup_{\mathrm{Tr}[T^T T]=1}\sum_{k,\ell=1}^{n}
\left|
\sum_{i=\ell}^{n}
(\Gamma_n^{1/2})_{i,k}\operatorname{Tr}(E_{i,\ell} T)
\right|^2.
$$
Denote by $\mathbf{t}^{(\ell)}=(0,\ldots,0, \operatorname{Tr}(E_{\ell,\ell} T),\ldots, \operatorname{Tr}(E_{n,\ell} T))^T$. Then applying the same argument used to establish
\eqref{eq:Gamma-quadratic-bound}, with $\mathbf{c}^{(\ell)}$ replaced by $\mathbf{t}^{(\ell)}$, yields
$$
v_n^2
\leq
\frac{\|\Gamma_n\|}{n}
\sup_{\mathrm{Tr}[T^T T]=1}
\sum_{1\leq \ell\leq i\leq n}
\left|
\mathrm{Tr}( E_{i,\ell}T)
\right|^2\leq
\frac{2\|\Gamma_n\|}{n}
\sup_{\mathrm{Tr}[T^T T]=1}
\mathrm{Tr}[T^T T].
$$
Since $\mathrm{Tr}[T^T T]=1$, we conclude by using \eqref{eq:Gamma-Schur-bound}.
\end{proof}
\section{Some insights for the extension to the two-directional causal model}
\label{appendix:twodimensional-extension}
In this section we explain how the proof of Proposition~\ref{propcomparaisonmomentsp} has to be modified when the entries are generated by the two-directional causal model \eqref{eq:Xkl-def-2D}:
\[
X_{k,\ell}
=
f\big(\varepsilon_{k-i,\ell-j}: i,j\ge 0\big),
\qquad k,\ell\in\mathbb Z,
\]
instead of the one-directional model \eqref{eq:Xkl-def-1D}. The aim is to construct, in this more general setting, a suitable Gaussian Wigner-type proxy whose covariance structure matches that of the finite-memory approximation and whose spectrum can then be compared with ${\rm sp}(\mathbb X_n)$. The main difference is that the finite-memory approximation now depends on a two-dimensional window of innovations. Consequently, the one-dimensional blocking and freezing procedure used in the proof of Proposition~\ref{propcomparaisonmomentsp} must be replaced by a two-dimensional version. For this, we let  
\[
I^{(m)}_{k,\ell}  = [\![ (k-1)m+1, km]\!] \times  [\![ (\ell-1)m+1, \ell m]\!]  \, , 
\]
\[
I^{(m)}_{k, \cdot}  = [\![ (k-1)m+1, km]\!] \times  {\mathbb Z} \text{ and } I^{(m)}_{ \cdot, \ell}  = {\mathbb Z} \times
 [\![ (\ell-1)m+1, \ell m]\!]  \, .  \]
Define
\begin{align*}
\overrightarrow{{\bf \eta}_{k, \ell} } = \{  \varepsilon_{u,v} : (u,v) \in I^{(m)}_{k,\ell} \} \, , \, \overrightarrow{{\bf \eta}_{k, \cdot} } = \{  \varepsilon_{u,v} : (u,v) \in I^{(m)}_{k,\cdot} \} \, , \, \overrightarrow{{\bf \eta}_{ \cdot, \ell} } =
 \{  \varepsilon_{u,v} : (u,v) \in I^{(m)}_{\cdot, \ell} \} \, , 
\end{align*}
and
\begin{align*}
{{\bf a}_{k, \ell} } = \{  a_{u,v} : (u,v) \in I^{(m)}_{k,\ell} \} \, , \, {{\bf a}_{k, \cdot} } = \{ a_{u,v} : (u,v) \in I^{(m)}_{k,\cdot} \} \, , \, {{\bf a}_{ \cdot, \ell} } =
 \{  a_{u,v} : (u,v) \in I^{(m)}_{\cdot, \ell} \} \, .
\end{align*}
Fix two positive integers $k$ and $\ell$. We consider the $3m\times 3m$ macro-block $I^{(3m)}_{k,\ell}$, which is decomposed into nine sub-blocks of size $m\times m$. The purpose of the following construction is to localize the dependence inside the central $2m\times 2m$ part of this macro-block. More precisely, when the finite-memory window associated with an entry $(i,j)$ crosses one of the boundary strips of the macro-block, the corresponding innovations are replaced by deterministic values. This is the two-dimensional analogue of the boundary-freezing procedure used in the one-directional case. The boundary strips outside this square are frozen and are denoted by the deterministic blocks ${\bf a}_{3k-3,\cdot}$, ${\bf a}_{3k,\cdot}$, ${\bf a}_{\cdot,3\ell-3}$ and ${\bf a}_{\cdot,3\ell}$, depending on which side of the macro-block is crossed. This leads to nine modified arrays, corresponding to the nine sub-blocks of the $3m\times 3m$ macro-block. The detailed definitions are given below:

\vspace{0.5cm}
\noindent
$\bullet$ If $(i,j)  \in  [\![ (3k-3)m+1, (3k -2) m]\!] \times  [\![ (3\ell-3)m+1, (3\ell-2)  m]\!] $, let 
\begin{align*}
Y^{(1,1)}_{i, j}  ( {{\bf a}_{ 3k-3, \cdot} } ;  {{\bf a}_{ \cdot, 3 \ell - 3} } )   & = g_m \Big (  \big (  \varepsilon_{u,v}  \big )_{ (3k-3)m+1 \leq u \leq i  \atop  (3\ell-3)m+1 \leq v \leq  j} 
 ,  \big (  a_{u,v}  \big )_{ i-m \leq u \leq i  \atop   j-m \leq v \leq (3\ell-3)m}  ,  \big (  a_{u,v}  \big )_{  i-m \leq u \leq (3k-3)m \atop  (3\ell-3)m+1 \leq v \leq  j}  \Big ) \\
 & : =  g_{m,i,j}^{(1,1)} \big (  \overrightarrow{{\bf \eta}_{3k-2, 3\ell-2} }  ,   {{\bf a}_{ 3k-3, \cdot} } ,  {{\bf a}_{ \cdot, 3 \ell - 3} }  \big ) \, , 
 \end{align*}
otherwise set $Y^{(1,1)}_{i, j}  ( {{\bf a}_{ 3k-3, \cdot} } ;  {{\bf a}_{ \cdot, 3 \ell - 3} } )  =0$. Then define
\[
{\mathbb Y}^{(1,1)}_{k, \ell}  ( {{\bf a}_{ 3k-3, \cdot} } ;  {{\bf a}_{ \cdot, 3 \ell - 3} } )  = \Big (  Y^{(1,1)}_{i, j}  ( {{\bf a}_{ 3k-3, \cdot} } ;  {{\bf a}_{ \cdot, 3 \ell - 3} } )  \Big )_{  (i,j) \in I^{(3m)}_{k, \ell}  } \, .
\]
$\bullet$ If $(i,j)  \in  [\![ (3k-3)m+1, (3k -2) m]\!] \times  [\![ (3\ell-2)m+1, (3\ell-1)  m]\!] $, let 
\begin{align*}
Y^{(1,2)}_{i, j}  ( {{\bf a}_{ 3k-3, \cdot} } )   & = g_m \Big (  \big (  \varepsilon_{u,v}  \big )_{ (3k-3)m+1 \leq u \leq i  \atop  j-m \leq v \leq  j} 
 ,  \big (  a_{u,v}  \big )_{ i-m \leq u \leq   (3k-3)m  \atop   j-m \leq v \leq  j}  \Big ) \\
 & : =  g_{m,i,j}^{(1,2)} \big (  \overrightarrow{{\bf \eta}_{3k-2, 3\ell-2} }  ,   \overrightarrow{{\bf \eta}_{3k-2, 3\ell-1} } ,  {{\bf a}_{ 3k-3, \cdot} }  \big ) \, , 
 \end{align*}
otherwise set $Y^{(1,2)}_{i, j}  ( {{\bf a}_{ 3k-3, \cdot} }  )  =0$. Then define
\[
{\mathbb Y}^{(1,2)}_{k, \ell}  ( {{\bf a}_{ 3k-3, \cdot} } )  = \Big (  Y^{(1,2)}_{i, j}  ( {{\bf a}_{ 3k-3, \cdot} }  )  \Big )_{  (i,j) \in I^{(3m)}_{k, \ell}  } \, .
\]
$\bullet$ If $(i,j)  \in  [\![ (3k-3)m+1, (3k -2) m]\!] \times  [\![ (3\ell-1)m+1, 3\ell  m]\!] $, let 
\begin{align*}
Y^{(1,3)}_{i, j}  ( {{\bf a}_{ 3k-3, \cdot} } ;  {{\bf a}_{ \cdot, 3 \ell } } )     & = g_m \Big (  \big (  \varepsilon_{u,v}  \big )_{ (3k-3)m+1 \leq u \leq i  \atop  j-m \leq v \leq  (3\ell-1)m} 
 ,  \big (  a_{u,v}  \big )_{ i-m \leq u \leq   (3k-3)m  \atop   j-m \leq v \leq  j } ,  \big (  a_{u,v}  \big )_{(3k-3)m+1 \leq u \leq i  \atop  (3\ell-1)m +1 \leq v \leq  j} \Big ) \\
 & : =  g_{m,i,j}^{(1,3)} \big (     \overrightarrow{{\bf \eta}_{3k-2, 3\ell-1} } ,  {{\bf a}_{ 3k-3, \cdot} } , {{\bf a}_{ \cdot, 3 \ell } } \big ) \, , 
 \end{align*}
otherwise set $Y^{(1,3)}_{i, j}  ( {{\bf a}_{ 3k-3, \cdot} } ;  {{\bf a}_{ \cdot, 3 \ell } }   )   =0$. Then define
\[
{\mathbb Y}^{(1,3)}_{k, \ell}  ( {{\bf a}_{ 3k-3, \cdot} }  ;  {{\bf a}_{ \cdot, 3 \ell } } )   = \Big (  Y^{(1,3)}_{i, j}  ( {{\bf a}_{ 3k-3, \cdot} } ;  {{\bf a}_{ \cdot, 3 \ell } } )      \Big )_{  (i,j) \in I^{(3m)}_{k, \ell}  } \, .
\]
$\bullet$ If $(i,j)  \in  [\![ (3k-2)m+1, (3k -1) m]\!] \times  [\![ (3\ell-3)m+1, (3\ell-2)  m]\!] $, let 
\begin{align*}
Y^{(2,1)}_{i, j}  ( {{\bf a}_{ \cdot, 3 \ell - 3} } )   & = g_m \Big (  \big (  \varepsilon_{u,v}  \big )_{ i-m \leq u \leq i  \atop  (3\ell-3)m+1 \leq v \leq  j} 
 ,  \big (  a_{u,v}  \big )_{ i-m \leq u \leq i  \atop   j-m \leq v \leq (3\ell-3)m}    \Big ) \\
 & : =  g_{m,i,j}^{(2,1)} \big (  \overrightarrow{{\bf \eta}_{3k-2, 3\ell-2} }, \overrightarrow{{\bf \eta}_{3k-1, 3\ell-2} }   ,    {{\bf a}_{ \cdot, 3 \ell - 3} }  \big ) \, , 
 \end{align*}
otherwise set $Y^{(2,1)}_{i, j}  (  {{\bf a}_{ \cdot, 3 \ell - 3} } )  =0$. Then define
\[
{\mathbb Y}^{(2,1)}_{k, \ell}  (   {{\bf a}_{ \cdot, 3 \ell - 3} } )  = \Big (  Y^{(2,1)}_{i, j}  (   {{\bf a}_{ \cdot, 3 \ell - 3} } )  \Big )_{  (i,j) \in I^{(3m)}_{k, \ell}  } \, .
\]
$\bullet$ If $(i,j)  \in  [\![ (3k-2)m+1, (3k -1) m]\!] \times  [\![ (3\ell-2)m+1, (3\ell-1)  m]\!] $, let 
\[
Y^{(2,2)}_{i, j}   = g_m \Big (  \big (  \varepsilon_{u,v}  \big )_{ i-m \leq u \leq i  \atop  j-m \leq v \leq  j} 
   \Big )  \, , 
\]
otherwise set $Y^{(2,2)}_{i, j}    =0$. Then define
\[
{\mathbb Y}^{(2,2)}_{k, \ell}  = \Big (  Y^{(2,2)}_{i, j}   \Big )_{  (i,j) \in I^{(3m)}_{k, \ell}  } \, .
\]
$\bullet$ If $(i,j)  \in  [\![ (3k-2)m+1, (3k -1) m]\!] \times  [\![ (3\ell-1)m+1, 3\ell  m]\!] $, let 
\begin{align*}
Y^{(2,3)}_{i, j}  ( {{\bf a}_{ \cdot, 3 \ell} } )   & =  g_m \Big (  \big (  \varepsilon_{u,v}  \big )_{ i-m \leq u \leq i  \atop  j-m \leq v \leq  (3\ell-1)m} 
 ,  \big (  a_{u,v}  \big )_{ i-m \leq u \leq i  \atop   (3\ell-1)m +1 \leq v \leq j  }  \Big ) \\
 & : =   g_{m,i,j}^{(2,3)} \big (  \overrightarrow{{\bf \eta}_{3k-2, 3\ell-1} }, \overrightarrow{{\bf \eta}_{3k-1, 3\ell-1} }   ,    {{\bf a}_{ \cdot, 3 \ell } }  \big ) \, , 
 \end{align*}
otherwise set $Y^{(2,3)}_{i, j}  (  {{\bf a}_{ \cdot, 3 \ell } } )  =0$. Then define
\[
{\mathbb Y}^{(2,3)}_{k, \ell}  (   {{\bf a}_{ \cdot, 3 \ell } } )  = \Big (  Y^{(2,3)}_{i, j}  (   {{\bf a}_{ \cdot, 3 \ell } } )  \Big )_{  (i,j) \in I^{(3m)}_{k, \ell}  } \, .
\]
$\bullet$ If $(i,j)  \in  [\![ (3k-1)m+1, 3k  m]\!] \times  [\![ (3\ell-3)m+1, (3\ell-2)  m]\!] $, let 
\begin{align*}
Y^{(3,1)}_{i, j}  ( {{\bf a}_{ 3k, \cdot } } ;  {{\bf a}_{ \cdot, 3 \ell - 3} } )   & = g_m \Big (  \big (  \varepsilon_{u,v}  \big )_{ i-m \leq u \leq (3k-1)m  \atop  (3\ell-3)m+1 \leq v \leq  j} 
 ,  \big (  a_{u,v}  \big )_{ i-m \leq u \leq (3k-1)m  \atop   j-m \leq v \leq (3\ell-3)m}     ,  \big (  a_{u,v}  \big )_{ (3k-1)m +1 \leq u \leq  i \atop   j-m \leq v \leq j}  \Big ) \\
 & : =  g_{m,i,j}^{(3,1)} \big (  \overrightarrow{{\bf \eta}_{3k-1, 3\ell-2} },  {{\bf a}_{ 3k, \cdot } } ,    {{\bf a}_{ \cdot, 3 \ell - 3} }  \big ) \, ,  
 \end{align*}
otherwise set $Y^{(3,1)}_{i, j}   ( {{\bf a}_{ 3k, \cdot } } ;  {{\bf a}_{ \cdot, 3 \ell - 3} } )  =0$. Then define
\[
{\mathbb Y}^{(3,1)}_{k, \ell}   ( {{\bf a}_{ 3k, \cdot } } ;   {{\bf a}_{ \cdot, 3 \ell - 3} } )  = \Big (  Y^{(3,1)}_{i, j}   ( {{\bf a}_{ 3k, \cdot } } ;  {{\bf a}_{ \cdot, 3 \ell - 3} } )  \Big )_{  (i,j) \in I^{(3m)}_{k, \ell}  } \, .
\]
$\bullet$ If $(i,j)  \in  [\![ (3k-1)m+1, 3k  m]\!] \times  [\![ (3\ell-2)m+1, (3\ell-1)  m]\!] $, let 
\begin{align*}
Y^{(3,2)}_{i, j}  ( {{\bf a}_{ 3k, \cdot } } )   & = g_m \Big (  \big (  \varepsilon_{u,v}  \big )_{ i-m \leq u \leq (3k-1)m  \atop j-m \leq v \leq  j} 
     ,  \big (  a_{u,v}  \big )_{ (3k-1)m +1 \leq u \leq  i \atop   j-m \leq v \leq j}  \Big ) \\
 & : =  g_{m,i,j}^{(3,2)} \big (  \overrightarrow{{\bf \eta}_{3k-1, 3\ell-2} },   \overrightarrow{{\bf \eta}_{3k-1, 3\ell-1} },  {{\bf a}_{ 3k, \cdot } }  \big ) \, ,  
 \end{align*}
otherwise set $Y^{(3,2)}_{i, j}   ( {{\bf a}_{ 3k, \cdot } }  )  =0$. Then define
\[
{\mathbb Y}^{(3,2)}_{k, \ell}   ( {{\bf a}_{ 3k, \cdot } } )  = \Big (  Y^{(3,2)}_{i, j}   ( {{\bf a}_{ 3k, \cdot } }  )  \Big )_{  (i,j) \in I^{(3m)}_{k, \ell}  } \, .
\]
$\bullet$ If $(i,j)  \in  [\![ (3k-1)m+1, 3k  m]\!] \times  [\![ (3\ell-1)m+1, 3\ell  m]\!] $, let 
\begin{align*}
Y^{(3,3)}_{i, j}  ( {{\bf a}_{ 3k, \cdot } } ;  {{\bf a}_{  \cdot, 3 \ell } } )   & = g_m \Big (  \big (  \varepsilon_{u,v}  \big )_{ i-m \leq u \leq (3k-1)m  \atop j-m \leq v \leq  (3\ell-1)m} 
     ,  \big (  a_{u,v}  \big )_{ (3k-1)m +1 \leq u \leq  i \atop   j-m \leq v \leq  (3\ell-1)m} ,  \big (  a_{u,v}  \big )_{i-m \leq u \leq  i \atop    (3\ell-1)m +1 \leq v \leq j}  \Big ) \\
 & : =  g_{m,i,j}^{(3,3)} \big (  \overrightarrow{{\bf \eta}_{3k-1, 3\ell-1} },  {{\bf a}_{ 3k, \cdot } }  ,  {{\bf a}_{  \cdot, 3 \ell } }  \big ) \, ,  
 \end{align*}
otherwise set $Y^{(3,3)}_{i, j}   ( {{\bf a}_{ 3k, \cdot } } ,  {{\bf a}_{  \cdot, 3 \ell } }  )  =0$. Then define
\[
{\mathbb Y}^{(3,3)}_{k, \ell}   ( {{\bf a}_{ 3k, \cdot } };  {{\bf a}_{  \cdot, 3 \ell } }  )  = \Big (  Y^{(3,3)}_{i, j}   ( {{\bf a}_{ 3k, \cdot } } ;   {{\bf a}_{  \cdot, 3 \ell } } )  \Big )_{  (i,j) \in I^{(3m)}_{k, \ell}  } \, .
\]
We now center the nine matrices introduced above. For each $(u,v)\in\{1,2,3\}^2$, we denote by ${\mathbb M}^{(u,v)}_{k,\ell}(\cdot) := \E\big[{\mathbb Y}^{(u,v)}_{k,\ell}(\cdot)\big]$ 
the corresponding mean matrix, where the expectation is taken over the remaining random innovations, the frozen boundary values being kept fixed. We then set ${\widetilde {\mathbb Y}}^{(u,v)}_{k, \ell}   (  \cdot  )  : =   {\mathbb Y}^{(u,v)}_{k, \ell}   (  \cdot  )  - {\mathbb M}^{(u,v)}_{k, \ell}   (  \cdot ) $. For fixed deterministic boundary blocks, the randomness in the $3m\times 3m$ macro-block indexed by $(k,\ell)$ is carried only by the four interior innovation
 $(   \overrightarrow{{\bf \eta}_{3k-2, 3\ell-2} },   \overrightarrow{{\bf \eta}_{3k-2, 3\ell-1} },   \overrightarrow{{\bf \eta}_{3k-1, 3\ell-2} },   \overrightarrow{{\bf \eta}_{3k-1, 3\ell-1} })$.  Since these four blocks are disjoint from the corresponding interior blocks of other macro-blocks, the centered matrices associated with different pairs $(k,\ell)$ are independent after conditioning on the frozen boundary values. The next step is to reorganize the nine centered matrices into fewer components that will later be replaced by Gaussian matrices with matching covariances. The role of this decomposition is analogous to the conditional-expectation split used in the one-dimensional proof: each middle column block is decomposed into a conditional expectation part and a centered remainder. We first
combine them in the column direction. This produces six centered matrices, denoted by
\[\begin{aligned}
{\mathbb A}^{(1,1)}_{k, \ell}  ( {{\bf a}_{ 3k-3, \cdot} } ;  {{\bf a}_{ \cdot, 3 \ell - 3} } )  &= {\widetilde {\mathbb Y}}^{(1,1)}_{k, \ell}  ( {{\bf a}_{ 3k-3, \cdot} } ;  {{\bf a}_{ \cdot, 3 \ell - 3} } )  + \E \Big ( 
{\widetilde {\mathbb Y}}^{(1,2)}_{k, \ell}  ( {{\bf a}_{ 3k-3, \cdot} } ) |  \overrightarrow{{\bf \eta}_{3k-2, 3\ell-2} } \Big ) \, ,
\\
{\mathbb A}^{(1,2)}_{k, \ell} ( {{\bf a}_{ 3k-3, \cdot} } ;  {{\bf a}_{ \cdot, 3 \ell } } )  &= {\widetilde {\mathbb Y}}^{(1,3)}_{k, \ell}  ( {{\bf a}_{ 3k-3, \cdot} } ;  {{\bf a}_{ \cdot, 3 \ell } } )  + {\widetilde {\mathbb Y}}^{(1,2)}_{k, \ell}  ( {{\bf a}_{ 3k-3, \cdot} } )  - \E \Big ( 
{\widetilde {\mathbb Y}}^{(1,2)}_{k, \ell}  ( {{\bf a}_{ 3k-3, \cdot} } ) |  \overrightarrow{{\bf \eta}_{3k-2, 3\ell-2} } \Big ) \, ,
\\
{\mathbb A}^{(2,1)}_{k, \ell}  (  {{\bf a}_{ \cdot, 3 \ell - 3} } )  &= {\widetilde {\mathbb Y}}^{(2,1)}_{k, \ell}  (   {{\bf a}_{ \cdot, 3 \ell - 3} } )  + \E \Big ( 
{\widetilde {\mathbb Y}}^{(2,2)}_{k, \ell}   |  \overrightarrow{{\bf \eta}_{3k-2, 3\ell-2} } ,  \overrightarrow{{\bf \eta}_{3k-1, 3\ell-2} }  \Big ) \, ,
\\
{\mathbb A}^{(2,2)}_{k, \ell}  (  {{\bf a}_{ \cdot, 3 \ell } } )  &= {\widetilde {\mathbb Y}}^{(2,3)}_{k, \ell}  (  {{\bf a}_{ \cdot, 3 \ell } } )  + {\widetilde {\mathbb Y}}^{(2,2)}_{k, \ell}  - \E \Big ( 
{\widetilde {\mathbb Y}}^{(2,2)}_{k, \ell}   |  \overrightarrow{{\bf \eta}_{3k-2, 3\ell-2} } ,  \overrightarrow{{\bf \eta}_{3k-1, 3\ell-2} }  \Big ) \, ,
\\
{\mathbb A}^{(3,1)}_{k, \ell}  ( {{\bf a}_{ 3k, \cdot} } ;  {{\bf a}_{ \cdot, 3 \ell - 3} } )  &= {\widetilde {\mathbb Y}}^{(3,1)}_{k, \ell}  ( {{\bf a}_{ 3k, \cdot} } ;  {{\bf a}_{ \cdot, 3 \ell - 3} } )  + \E \Big ( 
{\widetilde {\mathbb Y}}^{(3,2)}_{k, \ell}  ( {{\bf a}_{ 3k, \cdot} } ) |  \overrightarrow{{\bf \eta}_{3k-1, 3\ell-2} } \Big ) \, ,
\\
{\mathbb A}^{(3,2)}_{k, \ell} ( {{\bf a}_{ 3k, \cdot} } ;  {{\bf a}_{ \cdot, 3 \ell } } )  &= {\widetilde {\mathbb Y}}^{(3,3)}_{k, \ell}  ( {{\bf a}_{ 3k, \cdot} } ;  {{\bf a}_{ \cdot, 3 \ell } } )  + {\widetilde {\mathbb Y}}^{(3,2)}_{k, \ell}  ( {{\bf a}_{ 3k, \cdot} } )  - \E \Big ( 
{\widetilde {\mathbb Y}}^{(3,2)}_{k, \ell}  ( {{\bf a}_{ 3k, \cdot} } ) |  \overrightarrow{{\bf \eta}_{3k-1, 3\ell-2} } \Big ) \, .
\end{aligned}
\]
We shall next combine these six matrices once more, now in the row direction, in order to obtain four centered matrices
${\mathbb B}^{(u,v)}_{k,\ell}(\cdot)
$.
These four matrices will play the role of the final uncorrelated block components in the two-dimensional construction.
\[
\begin{aligned}
{\mathbb B}^{(1,1)}_{k, \ell} ( {{\bf a}_{ 3k-3, \cdot} } ;  {{\bf a}_{ \cdot, 3 \ell - 3} } )  &= {\mathbb A}^{(1,1)}_{k, \ell} ( {{\bf a}_{ 3k-3, \cdot} } ;  {{\bf a}_{ \cdot, 3 \ell - 3} } )  + \E \Big ( 
{\mathbb A}^{(2,1)}_{k, \ell}  ( {{\bf a}_{ \cdot , 3 \ell -3} } ) |  \overrightarrow{{\bf \eta}_{3k-2, 3\ell-2} } \Big ) \, ,
\\
{\mathbb B}^{(2,1)}_{k, \ell} ( {{\bf a}_{ 3k, \cdot} } ;  {{\bf a}_{ \cdot, 3 \ell - 3} } )  &= {\mathbb A}^{(3,1)}_{k, \ell} ( {{\bf a}_{ 3k, \cdot} } ;  {{\bf a}_{ \cdot, 3 \ell - 3} } )  + {\mathbb A}^{(2,1)}_{k, \ell} (   {{\bf a}_{ \cdot, 3 \ell - 3} } ) - \E \Big ( 
{\mathbb A}^{(2,1)}_{k, \ell}  ( {{\bf a}_{ \cdot , 3 \ell -3} } ) |  \overrightarrow{{\bf \eta}_{3k-2, 3\ell-2} } \Big ) \, ,
\\
{\mathbb B}^{(1,2)}_{k, \ell} ( {{\bf a}_{ 3k-3, \cdot} } ;  {{\bf a}_{ \cdot, 3 \ell } } ) & = {\mathbb A}^{(1,2)}_{k, \ell} ( {{\bf a}_{ 3k-3, \cdot} } ;  {{\bf a}_{ \cdot, 3 \ell } } )  + \E \Big ( 
{\mathbb A}^{(2,2)}_{k, \ell}  ( {{\bf a}_{ \cdot , 3 \ell } } ) |  \overrightarrow{{\bf \eta}_{3k-2, 3\ell-1} }, \overrightarrow{{\bf \eta}_{3k-2, 3\ell-2} } \Big ) \, ,
\\
{\mathbb B}^{(2,2)}_{k, \ell} ( {{\bf a}_{ 3k, \cdot} } ;  {{\bf a}_{ \cdot, 3 \ell} } )  &= {\mathbb A}^{(3,2)}_{k, \ell} ( {{\bf a}_{ 3k, \cdot} } ;  {{\bf a}_{ \cdot, 3 \ell } } )  + {\mathbb A}^{(2,2)}_{k, \ell}  ( {{\bf a}_{ \cdot , 3 \ell } } ) -  \E \Big ( 
{\mathbb A}^{(2,2)}_{k, \ell}  ( {{\bf a}_{ \cdot , 3 \ell } } ) |  \overrightarrow{{\bf \eta}_{3k-2, 3\ell-1}},\overrightarrow{{\bf \eta}_{3k-2, 3\ell-2} }  \Big )  \, .
\end{aligned}
\]
By construction, the random variables 
\[\Big ({\mathbb B}^{(1,1)}_{k, \ell} ( {{\bf a}_{ 3k-3, \cdot} } ;  {{\bf a}_{ \cdot, 3 \ell - 3} } ) , {\mathbb B}^{(2,1)}_{k, \ell} ( {{\bf a}_{ 3k, \cdot} } ;  {{\bf a}_{ \cdot, 3 \ell - 3} } ) , {\mathbb B}^{(1,2)}_{k, \ell} ( {{\bf a}_{ 3k-3, \cdot} } ;  {{\bf a}_{ \cdot, 3 \ell } } ) , {\mathbb B}^{(2,2)}_{k, \ell} ( {{\bf a}_{ 3k, \cdot} } ;  {{\bf a}_{ \cdot, 3 \ell } } )  \Big )_{k, \ell \geq 1} \]
are independent for fixed boundary values. Moreover, direct but tedious computations prove that,  for $k $ and $\ell$ fixed, the four random matrices are uncorrelated. This is due to the property of the conditional expectation, the independent between 
the vectors  $ \overrightarrow{{\bf \eta}_{3k-2, 3\ell-2} }$, $   \overrightarrow{{\bf \eta}_{3k-2, 3\ell-1} }$,  $ \overrightarrow{{\bf \eta}_{3k-1, 3\ell-2} }$,  $ \overrightarrow{{\bf \eta}_{3k-1, 3\ell-1} }$ and the following well known fact: If $X$, $Y$, $Z$ are three random vectors such that
$X$ is independent of $\sigma(Z)$ and $\sigma(Y)\subset\sigma(Z)$, then, for any measurable fonction $g$ such that $\Vert g(X,Y) \Vert_1 < \infty$, $
\mathbb{E}(g(X,Y)|Z)=\mathbb{E}(g(X,Y)|Y)\text{ a.s.}$

We now introduce Gaussian matrices whose covariance structures match those of the
four uncorrelated block components defined above. To this end, define the vectorization operator ${\rm Vect} : {\mathcal M}_{3m,3m} \rightarrow {\mathbb R}^{9m^2}$ by 
\[
{\rm Vect} \big (   ( M_{i,j})_{1 \leq i \leq 3m \atop 1 \leq j \leq 3m } \big )  = \big ( M_{1,1}, \dots,   M_{3m,1}, M_{1,2}, \dots,   M_{3m,2}, \dots, M_{1,3m}, \dots,   M_{3m,3m} \big )^t \, .
\]
Let 
\[
\Gamma^{(1,1)}_{k, \ell} ( {{\bf a}_{ 3k-3, \cdot} } ;  {{\bf a}_{ \cdot, 3 \ell - 3} } )  = {\rm Cov } \Big ( {\rm Vect} \big (  {\mathbb B}^{(1,1)}_{k, \ell} ( {{\bf a}_{ 3k-3, \cdot} } ;  {{\bf a}_{ \cdot, 3 \ell - 3} } )  \big ) \Big )
\]
\[
\Gamma^{(2,1)}_{k, \ell} ( {{\bf a}_{ 3k, \cdot} } ;  {{\bf a}_{ \cdot, 3 \ell - 3} } )  = {\rm Cov } \Big ( {\rm Vect} \big (  {\mathbb B}^{(2,1)}_{k, \ell} ( {{\bf a}_{ 3k, \cdot} } ;  {{\bf a}_{ \cdot, 3 \ell - 3} } )  \big ) \Big )
\]
\[
\Gamma^{(1,2)}_{k, \ell} ( {{\bf a}_{ 3k-3, \cdot} } ;  {{\bf a}_{ \cdot, 3 \ell } } )  = {\rm Cov } \Big ( {\rm Vect} \big (  {\mathbb B}^{(1,2)}_{k, \ell} ( {{\bf a}_{ 3k-3, \cdot} } ;  {{\bf a}_{ \cdot, 3 \ell } } )  \big ) \Big )
\]
\[
\Gamma^{(2,2)}_{k, \ell} ( {{\bf a}_{ 3k, \cdot} } ;  {{\bf a}_{ \cdot, 3 \ell } } )  = {\rm Cov } \Big ( {\rm Vect} \big (  {\mathbb B}^{(2,2)}_{k, \ell} ( {{\bf a}_{ 3k, \cdot} } ;  {{\bf a}_{ \cdot, 3 \ell } } )  \big ) \Big ) \, .
\]
Let $(Z^{(1,1)}_{i,j })_{i,j\in {\mathbb Z}}$, $(Z^{(1,2)}_{i,j })_{i,j\in {\mathbb Z}}$, $(Z^{(2,1)}_{i,j })_{i,j\in {\mathbb Z}}$ and $(Z^{(2,2)}_{i,j })_{i,j\in {\mathbb Z}}$ be a mutually independent family of iid standard Gaussian random variables that are also independent of the random field $(\varepsilon_{u,v})_{u,v \in {\mathbb Z}}$. Define the following square random matrix of size $3m$:  
\[
{\mathbb N}^{(1,1)}_{k, \ell} ( {{\bf a}_{ 3k-3, \cdot} } ;  {{\bf a}_{ \cdot, 3 \ell - 3} } )  = {\rm Vect}^{-1} \Big ( \big [ \Gamma^{(2,1)}_{k, \ell} ( {{\bf a}_{ 3k, \cdot} } ;  {{\bf a}_{ \cdot, 3 \ell - 3} } )  \big ]^{1/2}
 {\rm Vect} \big (   ( Z^{(1,1)}_{i,j})_{(3k-3)m+1 \leq i \leq 3km \atop (3\ell-3)m+1 \leq j \leq 3 \ell m } \big )   \Big )  \, .
\]
We define similarly the Gaussian matrices ${\mathbb N}^{(2,1)}_{k, \ell} ( {{\bf a}_{ 3k, \cdot} } ;  {{\bf a}_{ \cdot, 3 \ell - 3} } ) $, ${\mathbb N}^{(1,2)}_{k, \ell} ( {{\bf a}_{ 3k-3, \cdot} } ;  {{\bf a}_{ \cdot, 3 \ell } } ) $ and ${\mathbb N}^{(2,2)}_{k, \ell} ( {{\bf a}_{ 3k, \cdot} } ;  {{\bf a}_{ \cdot, 3 \ell } } ) $ with the obvious modifications. For fixed boundary values, the four Gaussian matrices are mutually independent.  

To reconstruct the original finite-memory block, we must also restore the deterministic centering terms removed from the nine matrices $\mathbb Y^{(r,s)}_{k,\ell}$, $r,s\in\{1,2,3\}$. For $u,v\in\{1,2\}$, we denote by $\widehat{\mathbb M}^{(u,v)}_{k,\ell}$ the deterministic mean correction associated with $\mathbb B^{(u,v)}_{k,\ell}$ and denote by
\[
\overrightarrow{{\bf \eta}_{{\bf 3},\cdot}}
:=
\Big\{
\overrightarrow{{\bf \eta}_{3k,\cdot}}
:
k\in\mathbb Z
\Big\}, \;
\overrightarrow{{\bf \eta}_{\cdot,{\bf 3}}}
:=
\Big\{
\overrightarrow{{\bf \eta}_{\cdot,3\ell}}
:
\ell\in\mathbb Z
\Big\}, \; {\bf a}_{{\bf 3},\cdot}
:=
\Big\{
{\bf a}_{3k,\cdot}
:
k\in\mathbb Z
\Big\},
\quad \text{and} \quad
{\bf a}_{\cdot,{\bf 3}}
:=
\Big\{
{\bf a}_{\cdot,3\ell}
:
\ell\in\mathbb Z
\Big\}.
\]
Finally, let 
\[
{\mathbb X}^{(m)} ( {{\bf a}_{ \bf{3}, \cdot} } ;  {{\bf a}_{ \cdot, \bf 3 } } )  = \frac{1}{\sqrt{n}} \sum_{k,\ell=1}^{k_{n,m}}  \sum_{(i,j) \in I^{(3m)}_{k, \ell}  \atop j\leq i }
\sum_{u,v=1}^2 \Big [  {\mathbb B}^{(u,v)}_{k, \ell} ( {{\bf a}_{ 3k-3(2-u), \cdot} } ;  {{\bf a}_{ \cdot, 3 \ell - 3 (2-v)} } ) +  \widehat{\mathbb M}^{(u,v)}_{k, \ell} ( {{\bf a}_{ 3k-3(2-u), \cdot} } ;  {{\bf a}_{ \cdot, 3 \ell - 3 (2-v)} } )  \Big ]_{i,j} E_{i,j} \, ,
\]
and note that ${\mathbb X}^{(m)} = {\mathbb X}^{(m)} (  \overrightarrow{{\bf \eta}_{ \bf 3, \cdot} } ;   \overrightarrow{{\bf \eta}_{ \cdot,\bf 3 } } )$. Also note that throughout this section, every sum over $(i,j)\in I^{(3m)}_{k,\ell}$ is understood to be restricted to $(i,j)\in I^{(3m)}_{k,\ell}\cap[\![1,n]\!]^2$, together with the condition $j\le i$. We then define  
\[
{\mathbb Z}^{(m)} ( {{\bf a}_{ \bf 3, \cdot} } ;  {{\bf a}_{ \cdot, \bf 3 } } )  = \frac{1}{\sqrt{n}} \sum_{k,\ell=1}^{k_{n,m}}  \sum_{(i,j) \in I^{(3m)}_{k, \ell} \atop j\leq i }
\sum_{u,v=1}^2 \Big [  {\mathbb N}^{(u,v)}_{k, \ell} ( {{\bf a}_{ 3k-3(2-u), \cdot} } ;  {{\bf a}_{ \cdot, 3 \ell - 3 (2-v)} } ) +  \widehat{\mathbb M}^{(u,v)}_{k, \ell} ( {{\bf a}_{ 3k-3(2-u), \cdot} } ;  {{\bf a}_{ \cdot, 3 \ell - 3 (2-v)} } )  \Big ]_{i,j} E_{i,j} \, ,
\]
 and set ${\mathbb Z}^{(m)} = {\mathbb Z}^{(m)} (  \overrightarrow{{\bf \eta}_{\bf 3, \cdot} } ;   \overrightarrow{{\bf \eta}_{ \cdot, \bf 3 } } )$. The comparison between $\mathbb X^{(m)}$ and $\mathbb Z^{(m)}$, corresponding to the term $Q_1$ in Section \ref{Subsection:Step2}, can now be carried out by the same interpolation argument as in the one-directional case. The essential difference is that an independent block now contains $m^2$ entries rather than $m$ entries. Accordingly, the block-size estimates appearing in \eqref{firstboundtrace} must be adjusted by replacing the one-dimensional block cardinality $m$ with the two-dimensional cardinality $m^2$. Next, we write that 
\begin{multline*}
{\mathbb Z}^{(m)} = \frac{1}{\sqrt{n}} \sum_{k,\ell=0}^{k_{n,m}}  \sum_{(i,j) \in I^{(3m)}_{k, \ell} \atop j\leq i } \Big \{ 
 \Big [  {\mathbb N}^{(1,1)}_{k+1, \ell} (\overrightarrow{{\bf \eta}_{ 3k, \cdot} } ; \overrightarrow{{\bf \eta}_{ \cdot, 3 \ell - 3} } ) +  \widehat{\mathbb M}^{(1,1)}_{k+1, \ell} (\overrightarrow{{\bf \eta}_{ 3k, \cdot} } ; \overrightarrow{{\bf \eta}_{ \cdot, 3 \ell - 3} } )  
 +   {\mathbb N}^{(1,2)}_{k+1, \ell} (\overrightarrow{{\bf \eta}_{ 3k, \cdot} } ; \overrightarrow{{\bf \eta}_{ \cdot, 3 \ell } } ) +  \widehat{\mathbb M}^{(1,2)}_{k+1, \ell} (\overrightarrow{{\bf \eta}_{ 3k, \cdot} } ; \overrightarrow{{\bf \eta}_{ \cdot, 3 \ell } } ) \\
  +   {\mathbb N}^{(2,1)}_{k, \ell} (\overrightarrow{{\bf \eta}_{ 3k, \cdot} } ; \overrightarrow{{\bf \eta}_{ \cdot, 3 \ell -3 } } ) +  \widehat{\mathbb M}^{(2,1)}_{k, \ell} (\overrightarrow{{\bf \eta}_{ 3k, \cdot} } ; \overrightarrow{{\bf \eta}_{ \cdot, 3 \ell-3 } } )  
  +   {\mathbb N}^{(2,2)}_{k, \ell} (\overrightarrow{{\bf \eta}_{ 3k, \cdot} } ; \overrightarrow{{\bf \eta}_{ \cdot, 3 \ell  } } ) +  \widehat{\mathbb M}^{(2,2)}_{k, \ell} (\overrightarrow{{\bf \eta}_{ 3k, \cdot} } ; \overrightarrow{{\bf \eta}_{ \cdot, 3 \ell } } )  \Big ]_{i,j} 
 \Big \}  E_{i,j} \, ,
\end{multline*} 
so that,  conditionally on the frozen-boundary field, $\mathbb{Z}^{(m)}$ is expressed as a sum of independent Gaussian block contributions with the same conditional means and covariances as the corresponding contributions to $\mathbb X^{(m)}$. Following the strategy used in the one-directional case, we do not compare $\mathbb Z^{(m)}$ directly with the final Gaussian matrix. Instead, we introduce an intermediate Gaussian proxy, denoted by $\mathbb W^{(m)}$ where we first condition on vertical boundary strips $\overrightarrow{{\bf \eta}_{\cdot,{\bf 3}}}$. This can be suitably approximated by a Wigner-type random matrix:
\[
{\mathbb W}^{(m)}  :=  \frac{1}{\sqrt{n}} \sum_{k,\ell=1}^{k_{n,m}}  \sum_{(i,j) \in I^{(3m)}_{k, \ell} \atop j\leq i }     \Big [  {\mathbb G}^{(1)}_{k,\ell} (  \overrightarrow{{\bf \eta}_{  \cdot, 3 \ell -3} })+ \widetilde{\mathbb{M}}^{(1)}_{k,\ell} (  \overrightarrow{{\bf \eta}_{  \cdot, 3 \ell -3} }) +  {\mathbb G}^{(2)}_{k,\ell} (  \overrightarrow{{\bf \eta}_{  \cdot, 3 \ell } } ) +\widetilde{\mathbb{M}}^{(2)}_{k,\ell} (  \overrightarrow{{\bf \eta}_{  \cdot, 3 \ell } })\Big ]_{i,j} E_{i,j} \, ,
\]
where the conditional mean corrections $\widetilde{\mathbb{M}}^{(u)}_{k,\ell}$ are obtained by regrouping the matrices $\widehat{\mathbb M}^{(u,v)}_{k,\ell}$ in the same way as the centered Gaussian contributions in $\mathbb Z^{(m)}$. When $ \overrightarrow{{\bf \eta}_{  \cdot, 3 } }  = {\bf a}_{  \cdot, 3 } $, the random matrices  $ \big (   {\mathbb G}^{(1)}_{k,\ell} ( {\bf a}_{  \cdot, 3 \ell } )  \big )_{k, \ell}$  and $ \big (   {\mathbb G}^{(2)}_{k,\ell} ( {\bf a}_{  \cdot, 3 \ell -3} )  \big )_{k, \ell}$ form two independent sequences of iid and Gaussian random variables. Now, rearranging the terms in ${\mathbb W}^{(m)}$, we write
\[
{\mathbb W}^{(m)}  :=  \frac{1}{\sqrt{n}} \sum_{k=1}^{k_{n,m}} \sum_{\ell=0}^{k_{n,m}}  \sum_{(i,j) \in I^{(3m)}_{k, \ell} \atop j\leq i }     \Big [  {\mathbb G}^{(1)}_{k,\ell+1} (  \overrightarrow{{\bf \eta}_{  \cdot, 3 \ell } }) +\widetilde{\mathbb{M}}^{(1)}_{k,\ell+1} (  \overrightarrow{{\bf \eta}_{  \cdot, 3 \ell } })+  {\mathbb G}^{(2)}_{k,\ell} (  \overrightarrow{{\bf \eta}_{  \cdot, 3 \ell } } )+\widetilde{\mathbb{M}}^{(2)}_{k,\ell} (  \overrightarrow{{\bf \eta}_{  \cdot, 3 \ell } })\Big ]_{i,j} E_{i,j} \, .
\]
Using the independence by blocks, the passage from $\mathbb W^{(m)}$ to the final Gaussian Wigner-type matrix $\mathbb G^{(m)}$ is obtained by applying the same regrouping and Gaussian replacement procedure in the vertical direction  
\[
{\mathbb G}^{(m)}  :=  \frac{1}{\sqrt{n}} \sum_{k=1}^{k_{n,m}} \sum_{\ell=0}^{k_{n,m}}  \sum_{(i,j) \in I^{(3m)}_{k, \ell} \atop j\leq i }     \Big [  {\mathbb N}^{(m)}_{k,\ell} \Big ]_{i,j} E_{i,j} \, ,
\]
where $\mathbb N^{(m)}_{k,\ell}$s are the resulting Gaussian blocks including the corresponding deterministic mean correction. By construction, $\mathbb G^{(m)}$ is Gaussian and, after the two successive
regroupings in the horizontal and vertical directions, its covariance structure coincides with that of the finite-memory process. 

The remaining estimates follow the same pattern as in the one-dimensional argument. The difference is that an independent macro-block now contains order $m^2$ scalar entries, rather than order $m$. Thus, in each block-size estimate (including the bounds used in the interpolation comparison and in the subsequent Gaussian replacement), the one-dimensional block cardinality $m$ is replaced by $m^2$. The truncation range in each coordinate remains $m$, so the truncation error is still controlled by $\delta(m)$. Taking $m=c^{-1}(2+\kappa)\log n$ as in the one-dimensional proof and inserting the resulting estimates into the two spectral-inclusion bounds gives the rate stated in Theorem~\ref{thm:Hausdorff distance2D}.

\section*{Acknowledgement}
\qquad M. Banna and I.M. Dabo acknowledge support from M. Banna's research fund at NYU Abu Dhabi. M. Banna was partially supported by the Fédération de Recherche d’Animation Bézout during her June 2026 visit to Université Gustave Eiffel as a visiting professor, during which substantial progress was made on this project.

 The research, development of the mathematical formulation and proofs, and the preparation of the manuscript were conducted by the three authors. ChatGPT was used for proofreading and language editing.

%{ The authors declare that they have no conflict of interest. No datasets were generated or analyzed during the current study.}

\bibliographystyle{abbrv}
\bibliography{ref}
\end{document}